\newtheorem{theorem}{Theorem}[section]
\newtheorem{proposition}[theorem]{Proposition}
\newtheorem{lemma}[theorem]{Lemma}
\newtheorem{corollary}[theorem]{Corollary}
\newtheorem{conjecture}[theorem]{Conjecture}
\newtheorem{question}[theorem]{Question}
\theoremstyle{definition}
\newtheorem{definition}[theorem]{Definition}
\theoremstyle{remark}
\newtheorem{remark}[theorem]{Remark}
\newcommand{\divisible}{\mathrm{div}}
\newcommand{\Kbar}{\overline{K}}
\DeclareMathOperator{\Gal}{Gal}
\DeclareMathOperator{\GL}{GL}
\newcommand{\Gm}{\mathbb{G}_\mathrm{m}}
\newcommand{\pp}{\mathbf{p}}
\newcommand{\qq}{\mathbf{q}}
\newcommand{\rr}{\mathbf{r}}
\newcommand{\tor}{\mathrm{tor}}
\begin{document}

\title[Kummer-faithfulness and the absolute Galois group]{Kummer-faithful fields with finitely generated absolute Galois group}

\author{Takuya Asayama}
\address{Center for Innovative Teaching and Learning, Institute of Science Tokyo, 2-12-1 Ookayama, Meguro-ku, Tokyo 152-8550, Japan}
\email{asayama@citl.isct.ac.jp}

\subjclass[2020]{12E30, 12F05, 14G27, 14K15, 11G10.}
\keywords{Kummer-faithful, infinite algebraic extension, absolute Galois group}

\date{}

\begin{abstract}
This paper studies the structure of the Mordell--Weil groups of semiabelian varieties over algebraic extensions of number fields whose absolute Galois group is finitely generated, with particular emphasis on that generated by a single element.
A probabilistic argument using the Haar measure on the absolute Galois group of a number field shows that almost all such fields are Kummer-faithful, i.e., the Mordell--Weil group of any semiabelian variety over any finite extension of such a field has trivial divisible part.
This result implies that there exists a Kummer-faithful field algebraic over a number field whose absolute Galois group is abelian.
\end{abstract}

\maketitle

\section{Introduction}\label{sec-intro}

This paper focuses on algebraic extensions of a number field whose absolute Galois group is finitely generated.
Fix an algebraic closure $\Kbar$ of any perfect field $K$ and let $G_K$ be the absolute Galois group $\Gal(\Kbar / K)$ of $K$.
Let $e$ be a positive integer.
For any $\sigma = (\sigma_1, \ldots, \sigma_e) \in G_K^e$ (the direct product of $e$ copies of $G_K$), set $\Kbar(\sigma)$ to be the fixed field of $\sigma$ in $\Kbar$, i.e.,
\[ \Kbar(\sigma) = \{x \in \Kbar \mid \sigma_i(x) = x \text{ for all } i\}. \]
Every field with finite transcendence degree over the field $\mathbb{Q}$ of rational numbers and with finitely generated absolute Galois group is of this form.
We equip the compact group $G_K^e$ with the normalized Haar measure $\mu = \mu_{G_K^e}$, which allows $G_K^e$ to be regarded as a probability space~\cite[Section~21.1]{FriedJ}.
The term \textit{almost all} $\sigma \in G_K^e$ is used in the sense of ``all $\sigma \in G_K^e$ outside some measure zero set".

We investigate in this paper how often the field $\Kbar(\sigma)$ is Kummer-faithful.
Here we recall the definitions of Kummer-faithfulness and related notions.

\begin{definition}[{\cite[Definition~1.5]{Mochizuki15}, \cite[Definition~6.1~(iii)]{HMT}}]
A perfect field  $K$  is said to be \textit{Kummer-faithful} (resp.\ \textit{torally Kummer-faithful}, \textit{AVKF} (meaning \textit{abelian variety Kummer-faithful})) if, for every finite extension $L$ of $K$ and every semiabelian variety (resp.\ every torus, every abelian variety) $A$ over $L$, the divisible part $A(L)_\divisible=\bigcap_{n \ge 1} nA(L)$  of the Mordell--Weil group $A(L)$  of  $A$  over  $L$  is trivial; 
\[ A(L)_\divisible = 0. \]
\end{definition}

Notice that the condition $A(L)_\divisible = 0$ in the above definition is equivalent to that the Kummer map associated with $A$ is injective~\cite[p.~2]{Hoshi}.
This property implies that each rational point is uniquely determined by the corresponding Kummer class via the Kummer map, being expected to play an important role in ensuring that the \'{e}tale fundamental group possesses enough information to allow the reconstruction of various geometric objects in the context of anabelian geometry.
Hoshi~\cite{Hoshi} proved some versions of the Grothendieck conjecture in anabelian geometry over Kummer-faithful fields satisfying some additional conditions.

A typical example of Kummer-faithful fields is a sub-$p$-adic field, i.e., a field isomorphic to a subfield of a finitely generated field of the field $\mathbb{Q}_p$ of $p$-adic numbers~\cite[Remark~1.5.4~(i)]{Mochizuki15}, and it would be interesting to find examples of Kummer-faithful fields that are not sub-$p$-adic.

\begin{remark}
It is obvious by definition that any Kummer-faithful field is both torally Kummer-faithful and AVKF.
The converse of each implication is not true; there exist both a torally Kummer-faithful field that is not Kummer-faithful~\cite[Remark~1.5.3~(ii)]{Mochizuki15} and an AVKF field that is not Kummer-faithful (any finite extension of the maximal abelian extension $\mathbb{Q}^\mathrm{ab}$ of $\mathbb{Q}$ is so~\cite[p.~11]{HMT}).
In the characteristic zero case, Kummer-faithfulness is equivalent to the conjunction of torally Kummer-faithfulness and AVKF-ness~\cite[Remark~6.1.3]{HMT}.
\end{remark}

If $K$ is finitely generated over $\mathbb{Q}$, then $\Kbar(\sigma)$ is not sub-$p$-adic for almost all $\sigma \in G_K^e$ and any prime number $p$~\cite[Proposition~5.1]{AT}.
This suggests that it would be worth considering $\Kbar(\sigma)$ for finding Kummer-faithful fields that are not sub-$p$-adic.
Jarden and Petersen~\cite[Theorem~1.3~(ii)]{JP22} proved that any finite extension of $\Kbar(\sigma)$ is AVKF for almost all $\sigma \in G_K^e$ if $K$ is finitely generated over $\mathbb{Q}$ and $e \ge 2$.
The Kummer-faithfulness for a subfield of $\Kbar(\sigma)$ has been studied by Ohtani~\cite{Ohtani22, Ohtani23} and in joint work of the author and Taguchi~\cite{AT}.
Based on these results, we propose the following conjecture.

\begin{conjecture}
Let $K$ be a finitely generated field over $\mathbb{Q}$ and $e$ a positive integer.
Then any finite extension of $\Kbar(\sigma)$ is Kummer-faithful for almost all $\sigma \in G_K^e$.
\end{conjecture}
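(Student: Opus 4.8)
The plan is to treat the semiabelian case by separating its toral and abelian parts, which is legitimate because $K$ has characteristic zero: by \cite[Remark~6.1.3]{HMT} a perfect field of characteristic zero is Kummer-faithful if and only if it is both torally Kummer-faithful and AVKF. Applying this to $\Kbar(\sigma)$ together with every one of its finite extensions, it suffices to show that for almost all $\sigma \in G_K^e$ every finite extension of $\Kbar(\sigma)$ is (I) torally Kummer-faithful and (II) AVKF. For $e \ge 2$, (II) is exactly \cite[Theorem~1.3~(ii)]{JP22}, so the remaining work is (I) for all $e \ge 1$ and (II) for $e = 1$. For the bookkeeping over finite extensions I would use that $\Kbar$ is countable when $K$ is finitely generated over $\mathbb{Q}$: up to isomorphism there are only countably many pairs $(K', A)$ with $K'/K$ a finite subextension of $\Kbar/K$ and $A$ a semiabelian variety over $K'$, and every (finite extension of $\Kbar(\sigma)$, semiabelian variety) arises from such a pair by base change after enlarging $K'$. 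Hence the set of $\sigma$ for which some finite extension of $\Kbar(\sigma)$ fails Kummer-faithfulness is a countable union of sets $S_{K',A} = \{\sigma \in G_K^e : A(\Kbar(\sigma)K')_{\divisible} \neq 0\}$, and it is enough to prove $\mu(S_{K',A}) = 0$ for each. Since $G_{K'}$ is open of finite index in $G_K$ and $\mu$ restricts well to $G_{K'}^e$ (Fubini), each $S_{K',A}$ reduces to a union of sets $\{\tau \in G_{K'}^{e'} : A(\overline{K'}(\tau))_{\divisible}\neq 0\}$ with $\tau$ in a suitable power of $G_{K'}$; for $e = 1$ one gets $e' = 1$ as well, because every open subgroup of a procyclic group is procyclic—this is the structural point that makes the single-generator case tractable, and also forces $G_{\Kbar(\sigma)}$ to be abelian, giving the application advertised in the abstract.

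For part (I) one reduces to the multiplicative group: a torus $T$ over a finite extension $L$ of $\Kbar(\sigma)$ splits over a finite extension $M/L$, so $T(L)\hookrightarrow T(M)\cong (M^\times)^{\dim T}$ and $T(L)_{\divisible}$ embeds into $\bigl(\bigcap_{n\ge1}(M^\times)^n\bigr)^{\dim T}$; thus it suffices to show $\bigcap_{n\ge1}(M^\times)^n = 1$ for every finite extension $M$ of $\Kbar(\sigma)$. An element $x$ of this intersection that is not a root of unity is algebraic over $K$, hence lies in $K'^\times$ for one of the countably many finite $K'/K$, and the statement ``$x$ has all $n$-th roots in $\Kbar(\sigma)$'' translates via Kummer theory for $\Gm$ into a condition on the images of the $\sigma_i$ under a family of characters $G_K \to \widehat{\mathbb{Z}}(1)$; a Borel--Cantelli / independence estimate for the Haar measure (in the spirit of \cite{AT}) makes the corresponding set of $\sigma$ null, and a countable union over $x$ completes (I).

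\textbf{The main obstacle is part (II) for $e = 1$}: showing that for almost all $\tau \in G_{K'}$ and every abelian variety $A$ over the finitely generated field $K'$—which is sub-$p$-adic, hence Kummer-faithful, and over which $A(K')$ is finitely generated—one has $A(\overline{K'}(\tau))_{\divisible}=0$. I would split this divisible group into its torsion part and a torsion-free complement. A nonzero divisible torsion subgroup contains a copy of $\mathbb{Q}_p/\mathbb{Z}_p$ for some $p$, equivalently $(T_pA)^{\tau}\neq 0$, i.e.\ $\rho_{A,p}(\tau)-1$ is not injective on $T_pA$; since $A(F)_{\tor}$ is finite for every finite $F/K'$, the variety $A$ has no $G_{K'}$-stable $\mathbb{Z}_p$-line in $T_pA$ on which $G_{K'}$ acts through a finite quotient, so $\{\,\tau : \det(\rho_{A,p}(\tau)-1)=0\,\}$ lies in a proper Zariski-closed subset of the $p$-adic Lie image of $\rho_{A,p}$ and is Haar-null; summing over $p$ disposes of the torsion case. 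A nonzero torsion-free divisible element is a point $P\in A(\overline{K'}(\tau))$ of infinite order with $P\in p^nA(\overline{K'}(\tau))$ for all primes $p$ and all $n$; fixing $F = K'(P)$, this means that for every $p$ the $p$-adic Kummer class $\kappa_p(P)\in H^1(G_F,T_pA)$—nonzero, in fact of infinite order, because $F$ is Kummer-faithful with $A(F)$ finitely generated—restricts to $0$ in $H^1(\overline{\langle\tau\rangle},T_pA)\cong T_pA/(\rho_{A,p}(\tau)-1)T_pA$.

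The delicate point is that, for a \emph{single} $p$, this restriction vanishes automatically whenever $\rho_{A,p}(\tau)-1$ is invertible on $T_pA$, so one cannot argue one prime at a time. One must instead exploit the constraint over all $p$ simultaneously—or locate, for each relevant $\tau$, infinitely many primes $p$ with $1$ an eigenvalue of $\rho_{A,p}(\tau)$ at which $\kappa_p(P)$ is not a coboundary on $\overline{\langle\tau\rangle}$—and show that the set of $\tau$ meeting all these conditions, equivalently the set of $\tau$ that fix some nonzero infinitely divisible point of $A(\overline{K'})$, has Haar measure zero. Carrying this out, presumably by combining information on the joint image of $(\rho_{A,p})_p$ with a quantitative form of the Kummer-faithfulness of $F$, is the heart of the matter; once (II) for $e = 1$ is established, $e \ge 2$ follows from it (or directly from \cite[Theorem~1.3~(ii)]{JP22}), and combining (I) and (II) through \cite[Remark~6.1.3]{HMT} yields the conjecture.
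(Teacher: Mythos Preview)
The statement you are attempting is a \emph{conjecture} in the paper; the paper does not prove it in full. It proves only the special case where $K$ is a number field (Theorem~1.3), and several of the tools it relies on---Serre's independence theorem for the system $(\rho_\ell)_\ell$, Zywina's density estimate for $\sigma$ fixing nontrivial $\ell$-torsion, the Perucca--Sgobba--Tronto degree formula---are stated and used only over number fields. So even a correct execution of your outline would go beyond what the paper establishes.

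Your outline is not yet a proof. You yourself flag the gap: for part (II) with $e=1$ you reduce to showing that the set of $\tau\in G_{K'}$ for which some non-torsion $P$ lies in $\bigcap_\ell \ell A(\overline{K'}(\tau))$ is null, observe that for a single $\ell$ the Kummer class $\kappa_\ell(P)$ restricts to zero whenever $\rho_{A,\ell}(\tau)-1$ is invertible (which is the generic case), and then say that ``carrying this out \ldots\ is the heart of the matter''. That is exactly the point where the paper does real work, and its method is different from your cohomological sketch. For a fixed non-torsion $\pp\in A(K)$ it partitions $S_\ell(\pp)\cap\beta G_L$ according to the subspace $W(\sigma)=A[\ell]^{\sigma}$, obtains the exact formula
\[
\mu(S_\ell(\pp)\cap\beta G_L)=\frac{1}{[L:K]}-\sum_{d=1}^{2g}\frac{\ell-1}{\ell^d}\,\mu(Y_d),
\]
and then feeds in Zywina's input $\mu(Y_1)\ge c/\ell$ on a positive-density set $\mathcal{P}$ of primes to get $\mu(S_\ell(\pp)\cap\beta G_L)\le [L:K]^{-1}-c'/\ell$. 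Independence of the events $\{S_\ell(\pp)\cap\beta G_L\}_{\ell\in\mathcal{P}}$ is obtained from Serre's theorem (after checking that $\rho_\ell$, not just $a_\ell$, is unramified at good places), and then $\prod_{\ell\in\mathcal{P}}(1-c'/\ell)=0$ finishes. None of this is visible in your Kummer-class/restriction picture, and I do not see how to make your approach produce a uniform-in-$\beta$ bound of the form $1-c/\ell$ together with independence across $\ell$.

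For part (I) your sketch is in the right spirit (Kummer theory plus an independence/Borel--Cantelli argument), but it skips two issues the paper handles carefully: the root-of-unity case (treated separately via \cite[Proposition~5.8]{AT}) and, more seriously, the fact that a given $a\in\Kbar^\times$ need not lie in $K$, so the sets ``some $\ell$-th root of $a$ lies in $\Kbar(\sigma^n)$'' are not simply unions of $G_{K(\sqrt[\ell]{a})}$-cosets. The paper first reduces to $a$ with $K(a)/K$ cyclic, then runs a coset-by-coset Vandermonde argument inside $\GL_{s+1}(\mathbb{F}_\ell)$ to get $\mu(T_\ell\cap G_{K(a)}\tau)\le r^{-1}(1-1/2\ell)$ uniformly in the coset, and uses the Perucca--Sgobba--Tronto theorem for linear disjointness of the Kummer extensions. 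Your one-line ``Borel--Cantelli in the spirit of \cite{AT}'' does not cover this.
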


The main theorem in this paper is that the above conjecture is true if $K$ is a number field.

\begin{theorem}\label{thm-KF}
Suppose that $K$ is a number field and $e$ a positive integer. 
For almost all $\sigma \in G_K^e$, any finite extension of $\Kbar(\sigma)$ is both torally Kummer-faithful and AVKF, and thus, Kummer-faithful.
\end{theorem}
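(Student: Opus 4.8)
The plan is to reduce Kummer-faithfulness to the statement that individual points fail to be $\ell$-divisible, reinterpret that as the non-existence of a Galois-fixed $\ell$-power root, and then run a Borel--Cantelli argument over the primes $\ell$. By the last Remark, in characteristic zero Kummer-faithfulness of a field is the conjunction of torally Kummer-faithful and AVKF, so it suffices to establish both. First I would record a few reductions. Every finite extension of $\Kbar(\sigma)$ is of the form $\Kbar(\sigma)\cdot F$ for a number field $F\supseteq K$ (indeed $\Kbar(\sigma)\cdot F=\overline{\mathbb{Q}}^{\langle\sigma\rangle\cap G_F}$), an $A$ over such a field is defined over a number field which may be enlarged to $F$, and splitting a torus over a finite extension reduces the toral case to $\Gm$; so it is enough to prove that for each triple $(F,A,P)$ with $A/F$ an abelian variety or $\Gm$ and $P\in A(F)$ — a countable list of data — one has $\mu\{\sigma\in G_K^e : P\in A(\Kbar(\sigma)F)_\divisible\}=0$. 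Using $A(M)_\divisible=\bigcap_{\ell,k}\ell^kA(M)\subseteq\bigcap_\ell \ell A(M)$, treating separately the case that $P$ is torsion (which forces $\mu_\infty\subseteq M$ resp.\ $A[\ell^\infty]\subseteq M$, hence holds only on a null set, since the Galois image of $\langle\sigma\rangle\cap G_F$ on these infinite towers is almost surely infinite), and, for an abelian variety, replacing $A$ by the smallest abelian subvariety $B$ with $P\in B(F)+A(F)_\tor$ so that $P$ is non-degenerate, I am reduced to showing: \emph{for almost all $\sigma$ there is a prime $\ell$ with $P\notin \ell A(\Kbar(\sigma)F)$.}

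Next, the Galois translation. With $M=\Kbar(\sigma)F$ one has $P\in\ell A(M)$ if and only if $\Gal(\overline{\mathbb{Q}}/M)=\langle\sigma\rangle\cap G_F$ fixes a point of the $A[\ell]$-torsor $X_\ell$ of $\ell$-th roots of $P$. By Kummer theory for abelian varieties (Ribet and successors; classical for $\Gm$), for all but finitely many $\ell$ the group $\Gamma_\ell:=\Gal(F(A[\ell],\ell^{-1}P)/F)$ equals $A[\ell]\rtimes G_\ell$, where $G_\ell=\Gal(F(A[\ell])/F)$ is the mod-$\ell$ image, acting on $X_\ell\cong A[\ell]$ through the natural affine action; thus $P\in\ell A(M)$ iff the image of $\langle\sigma\rangle\cap G_F$ in $\Gamma_\ell$ has a common fixed vector. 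After a conditioning argument on $\sigma|_{\widetilde F}$ (with $\widetilde F/K$ the Galois closure) that lets me assume $K=F$, this image is $\langle\bar\sigma_1,\dots,\bar\sigma_e\rangle$ with $\bar\sigma_i=(t_i,g_i)$ independent and uniform in $\Gamma_\ell=A[\ell]\times G_\ell$. (For $e\ge 2$ the AVKF part is due to Jarden--Petersen, but this argument is meant to be uniform in $e$, and the case $e=1$ is where the real work lies.)

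Now the probabilistic core. For $e=1$, $\langle\bar\sigma_1\rangle$ fixes a vector iff $t_1\in(g_1-1)A[\ell]$, so the probability that it fixes none is $\mathbb{E}_{g_1}\bigl[1-\ell^{-\dim A[\ell]^{g_1}}\bigr]$, which lies between $(1-\tfrac1\ell)\,q_\ell$ and $q_\ell$, where $q_\ell:=\Pr_{g\in G_\ell}[A[\ell]^g\neq 0]$; for $e\ge 2$ there are fewer common fixed vectors, so the probability of ``no fixed vector'' is at least as large (indeed close to $1$, making that case easy). Everything therefore comes down to the claim: there is an infinite set $\mathcal L$ of primes with $\sum_{\ell\in\mathcal L}1/\ell=\infty$ and $q_\ell\gg 1/\ell$ for $\ell\in\mathcal L$. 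For $\Gm$ this is immediate, as $G_\ell$ is the image of the cyclotomic character in $\mathbb F_\ell^\times$ and $A[\ell]^g\neq 0$ iff $g=1$, so $q_\ell=1/|G_\ell|\ge 1/(\ell-1)$ for every $\ell$; this already yields toral Kummer-faithfulness. For a general abelian variety I would split into cases: if $A$ has complex multiplication, take for $\mathcal L$ the positive-density set of primes with a place of residue degree $1$ in the relevant CM field, where $A[\ell]$ has a one-dimensional $G_\ell$-Jordan--Hölder factor $S$ and $q_\ell\ge 1/\#\mathrm{Im}(\chi_S)\ge 1/(\ell-1)$; otherwise one uses openness of the $\ell$-adic image (Bogomolov), reductivity of its Zariski closure (Faltings' semisimplicity) and Serre's results on mod-$\ell$ images, together with the fact that $V_\ell A$ has no trivial subquotient, and a Lang--Weil count for the proportion of $\mathbb F_\ell$-points of a connected reductive group lying in the zero locus of $g\mapsto\det(g-1)$, which gives $q_\ell\asymp 1/\ell$ for all large $\ell$. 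Finally, the events $E_\ell$ that $\langle\sigma|_{F(A[\ell],\ell^{-1}P)}\rangle$ has no fixed point in $X_\ell$, for $\ell\in\mathcal L$, depend on $\sigma$ only through the fields $F(A[\ell],\ell^{-1}P)$, which are pairwise linearly disjoint over $F$ outside a finite set, hence are mutually independent; since $\sum_{\ell\in\mathcal L}\Pr(E_\ell)\gg\sum_{\ell\in\mathcal L,\ \ell\gg 0}1/\ell=\infty$, the second Borel--Cantelli lemma shows that almost every $\sigma$ lies in infinitely many $E_\ell$, whence $P\notin\ell A(\Kbar(\sigma)F)$ for some $\ell$.

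Taking the countable intersection of these null complements over all triples $(F,A,P)$ and combining the $\Gm$ and abelian-variety parts, almost all $\sigma\in G_K^e$ have the property that every finite extension of $\Kbar(\sigma)$ is torally Kummer-faithful and AVKF, hence Kummer-faithful. The hard part will be the lower bound $q_\ell\gg 1/\ell$ along a set $\mathcal L$ with divergent $\sum 1/\ell$: this requires controlling the mod-$\ell$ image $G_\ell$ uniformly in $\ell$, with the CM case and the general higher-dimensional non-CM case needing genuinely different inputs, and then feeding this into the character-sum/Lang--Weil estimates. A secondary technical point is the passage from $G_K^e$ to $G_F^e$ by conditioning on $\sigma|_{\widetilde F}$, where one must check that the resulting restrictions of $\langle\sigma\rangle\cap G_F$ to the fields $F(A[\ell],\ell^{-1}P)$ remain equidistributed enough for the probability computation above to apply.
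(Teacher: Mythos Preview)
Your overall architecture matches the paper's: split into toral and AVKF parts, reduce to the statement that a fixed non-torsion point $P$ fails to be $\ell$-divisible in $\Kbar(\sigma)F$ for some $\ell$, translate this via Kummer theory into a fixed-vector question in $A[\ell]\rtimes G_\ell$, bound the fixed-vector probability using Zywina's $q_\ell\gg 1/\ell$, and finish with independence plus Borel--Cantelli. Your formula $\Pr(\text{no fixed vector})=\mathbb{E}_{g}[1-\ell^{-\dim A[\ell]^g}]$ is exactly the paper's Proposition~3.6 in different notation, and your $q_\ell$ bound is the paper's Theorem~3.10 (Zywina).

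The genuine gap is the ``conditioning argument on $\sigma|_{\widetilde F}$ that lets me assume $K=F$''. You assert that after conditioning, the image of the generator of $\langle\sigma\rangle\cap G_F$ in $\Gamma_\ell$ is \emph{uniform} in $A[\ell]\times G_\ell$. This is not true. If $\sigma\in\tau G_{\widetilde F}$ and $n$ is the index of $\langle\sigma\rangle\cap G_F$ in $\langle\sigma\rangle$, the relevant element is $\sigma^n$, and the distribution of $\sigma^n|_{F(A[\ell],\ell^{-1}P)}$ is the $n$-th power of a uniform element in a \emph{coset} of $\Gal(\widetilde F(A[\ell],\ell^{-1}P)/\widetilde F)$---a genuinely twisted distribution, not uniform in $\Gamma_\ell$. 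The paper does not attempt such a reduction. For the AVKF part it instead uses Weil restriction plus spreading out (via \cite{JP22}) to arrange that $A$ and $P$ are already defined over a number field $K'$ with $\sigma\in G_{K'}$, so no power of $\sigma$ ever appears. For the toral part no such clean reduction is available; the paper keeps track of $a$ with $K(a)/K$ merely cyclic, sets up an explicit $(s{+}1)$-dimensional representation $\rho_{\ell,j}$, and computes the first row of $\rho_{\ell,j}(\sigma^n)$ coset by coset via a Vandermonde argument to obtain $\mu(T_\ell\cap G_{K(a)}\tau)\le\frac{1}{r}(1-\frac{1}{2\ell})$. This computation is the real content of Section~2 and is precisely what your ``conditioning'' would have to reproduce. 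You flag this as a ``secondary technical point'', but it is in fact the crux; without it your $\Gm$ case only handles $a\in K$ and $n=1$.

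A smaller imprecision: ``pairwise linearly disjoint'' is not enough for mutual independence of the events $E_\ell$. The paper invokes Serre's independence theorem (Proposition~3.4) for the abelian-variety fields $K(A[\ell],X_\ell(\pp))$ and the Perucca--Sgobba--Tronto theorem for the Kummer fields $K(\sqrt[\ell]{H_a})$ to obtain the required surjectivity of $G_L\to\prod_\ell\rho_\ell(G_L)$.
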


Notice that the absolute Galois group $G_{\Kbar(\sigma)}$ of $\Kbar(\sigma)$ is abelian when $e = 1$.
This immediately implies that the following corollary.
Actually, more is true: there are continuum many algebraic extensions of $K$ up to $K$-isomorphism which are Kummer-faithful fields with abelian absolute Galois group.
See Corollary~\ref{cor-continuummany} for details.

\begin{corollary}\label{cor-thm-KF}
Let $K$ be a number field.
There exists an algebraic extension of $K$ that is Kummer-faithful and has abelian absolute Galois group.
\end{corollary}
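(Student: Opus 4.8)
The plan is to deduce the corollary immediately from Theorem~\ref{thm-KF} specialized to $e = 1$. First I would recall the observation already flagged in the paragraph preceding the corollary: for $\sigma \in G_K$, the field $\Kbar(\sigma)$ is by definition the fixed field of the closed subgroup $\overline{\langle \sigma \rangle} \subseteq G_K$ topologically generated by $\sigma$, so by the Galois correspondence for profinite groups $G_{\Kbar(\sigma)} = \Gal(\Kbar / \Kbar(\sigma)) = \overline{\langle \sigma \rangle}$, which is procyclic and in particular abelian. Hence \emph{every} $\sigma \in G_K$ yields an algebraic extension $\Kbar(\sigma)$ of $K$ with abelian absolute Galois group, and the only remaining task is to choose $\sigma$ so that $\Kbar(\sigma)$ is also Kummer-faithful.

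For that, I would invoke Theorem~\ref{thm-KF} with $e = 1$: the set $S \subseteq G_K$ consisting of those $\sigma$ for which some finite extension of $\Kbar(\sigma)$ fails to be torally Kummer-faithful or fails to be AVKF — equivalently, fails to be Kummer-faithful — has $\mu_{G_K}$-measure zero. Since $\mu_{G_K}$ is the \emph{normalized} Haar measure, hence a probability measure with $\mu_{G_K}(G_K) = 1$, a measure-zero set cannot exhaust $G_K$, so $G_K \setminus S$ is nonempty. Choosing any $\sigma \in G_K \setminus S$ and taking the trivial finite extension in the definition of Kummer-faithfulness, $\Kbar(\sigma)$ is Kummer-faithful, and by the previous paragraph $G_{\Kbar(\sigma)}$ is abelian. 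This $\Kbar(\sigma)$ is the desired field.

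I do not expect any genuine obstacle here: all the difficulty is absorbed into Theorem~\ref{thm-KF}, and the corollary is a formal consequence. The only points to double-check are that a probability space is not a null set (immediate) and that the definition of Kummer-faithfulness permits the trivial extension $L = K$ (it does). The sharper statement that there are continuum many such fields up to $K$-isomorphism requires additional work — estimating how many of the fields $\Kbar(\sigma)$, $\sigma \in G_K \setminus S$, can coincide up to $K$-isomorphism — but that is precisely the content of the separate Corollary~\ref{cor-continuummany} and is not needed for the statement at hand.
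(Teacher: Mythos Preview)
Your proposal is correct and matches the paper's own argument: the corollary is stated as an immediate consequence of Theorem~\ref{thm-KF} with $e=1$ together with the observation (made in the preceding paragraph) that $G_{\Kbar(\sigma)}$ is abelian for any $\sigma\in G_K$. The paper gives no further proof beyond this, and your write-up simply fills in the trivial step that a measure-one set is nonempty.
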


\begin{remark}
One of the assertions in Theorem~1.11 of Mochizuki's paper~\cite{Mochizuki15} claimed that the absolute Galois group of any Kummer-faithful field of characteristic zero is \textit{slim}, i.e., every open subgroup has trivial center.
He recently informed us that the proof of this assertion has a gap.
Corollary~\ref{cor-thm-KF} is incompatible with this assertion and shows the existence of a counterexample in the general case, but the proof is not constructive.
It also answers in the negative the questions~\cite[Remark~2.4.1, Questions~1 and~2]{MT} posed by Minamide and Tsujimura which asked whether the absolute Galois group of any torally Kummer-faithful field is slim.
\end{remark}

\begin{remark}
The assertions related to the (torally) Kummer-faithfulness of $\Kbar(\sigma)$ have appeared twice previously, first in~\cite[Corollary~1]{Ohtani22} and then in the PhD thesis~\cite[Theorem~4.2.1]{AsayamaThesis} of the author, but their proofs in both works were incorrect.
The former has been corrected and the weaker version~\cite[Corollary~1]{Ohtani23} was proved.
In our previous paper~\cite{AT}, the latter was addressed in p.~5 and the weaker result~\cite[Theorem~5.3]{AT} was shown.
\end{remark}

The proof of Theorem~\ref{thm-KF} is carried out by separating it into two parts; the torally Kummer-faithfulness part and the AVKF-ness part.
We provide proofs for each part in Sections~\ref{sec-TKFpart} and~\ref{sec-AVKFpart}, respectively.
The approaches to proving these parts are somewhat similar.
First, we reformulate the problem by reducing it to the calculation of measures.
For the AVKF-ness part, this step has been done by Jarden and Petersen~\cite{JP22}.
Then the calculation of measures is performed.
For the torally Kummer-faithfulness part, this step is based on a refinement of the proof of the weaker version~\cite[Theorem~5.3]{AT} presented in our previous paper.
Finally, we conclude the proof by using a combinatorial approach similar to that described by Zywina~\cite[pp.~495--496]{Zywina16} (see also~\cite[p.~49]{JP19}).
Section~\ref{sec-cor} contains corollaries to Theorem~\ref{thm-KF}.

\section*{Acknowledgments}
The author would like to thank \mbox{Yuichiro} \mbox{Taguchi} and \mbox{Naganori} \mbox{Yamaguchi} for useful discussions and comments throughout this study.

\section{Proof for the torally Kummer-faithfulness}\label{sec-TKFpart}
\subsection{Review of properties of $\Kbar(\sigma)$}

As preliminaries for proving the torally Kummer-faithfulness part of Theorem~\ref{thm-KF}, we make a brief review of a finite extension of $\Kbar(\sigma)$ for $\sigma \in G_K$.
In this subsection, we only assume that the field $K$ is perfect.
We start by showing the following result that appears as an exercise in \textit{Algebra} by Lang~\cite{Lang02}.

\begin{proposition}[{cf.~\cite[Chapter~VI, Exercise~27]{Lang02}}]\label{prop-FinExtOfKbarsigma-new}
Let $K$ be a perfect field and $\sigma \in G_K$.
Any finite extension $M$ over $\Kbar(\sigma)$ is a cyclic extension.
Moreover, its Galois group is generated by $\sigma|_M$.
\end{proposition}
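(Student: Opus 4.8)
The plan is to reduce everything to the standard finite Galois correspondence by passing to a Galois closure. The first step is to record that $\Kbar(\sigma)$ is perfect: since $\Kbar$ is algebraically closed it is perfect, and in characteristic $p>0$, for any $x \in \Kbar(\sigma)$ the unique $p$-th root $x^{1/p} \in \Kbar$ satisfies $\sigma(x^{1/p})^p = \sigma(x) = x = (x^{1/p})^p$, so uniqueness of $p$-th roots forces $\sigma(x^{1/p}) = x^{1/p}$, i.e.\ $x^{1/p} \in \Kbar(\sigma)$. Consequently every finite extension $M/\Kbar(\sigma)$ is separable, and we may take $M \subseteq \Kbar$. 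Let $N \subseteq \Kbar$ be the Galois closure of $M$ over $\Kbar(\sigma)$; it is a finite Galois extension of $\Kbar(\sigma)$.

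The second step is to show that $\sigma$ restricts to a generator of $\Gal(N/\Kbar(\sigma))$. Because $N/\Kbar(\sigma)$ is normal and $\sigma$ is an automorphism of $\Kbar$ fixing $\Kbar(\sigma)$ pointwise, $\sigma(N) = N$, so $\sigma|_N \in \Gal(N/\Kbar(\sigma))$. The fixed field of the finite cyclic subgroup $\langle \sigma|_N \rangle$ consists of those $x \in N$ with $\sigma(x) = x$, which is exactly $N \cap \Kbar(\sigma) = \Kbar(\sigma)$. By Artin's theorem, $[N:\Kbar(\sigma)] = |\langle \sigma|_N\rangle|$, hence $\langle \sigma|_N \rangle = \Gal(N/\Kbar(\sigma))$; in particular this group is cyclic, generated by $\sigma|_N$.

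The third step is to descend from $N$ to $M$. Since $\Gal(N/\Kbar(\sigma))$ is cyclic, the subgroup $\Gal(N/M)$ is normal, so $M/\Kbar(\sigma)$ is Galois, and $\Gal(M/\Kbar(\sigma)) \cong \Gal(N/\Kbar(\sigma))/\Gal(N/M)$ is a quotient of a cyclic group, hence cyclic, generated by the image of $\sigma|_N$, which is precisely $\sigma|_M$. This yields the claim.

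I expect the only subtle bookkeeping to be the perfectness/separability point at the start, needed so that the Galois closure exists and $\sigma$ genuinely stabilizes it; after that the argument is routine. Alternatively, one could phrase the whole proof profinitely: $\Kbar$ is an algebraic closure of $\Kbar(\sigma)$, and since $\Kbar(\sigma)$ is the fixed field of $\sigma$, infinite Galois theory gives $G_{\Kbar(\sigma)} = \Gal(\Kbar/\Kbar(\sigma)) = \overline{\langle \sigma \rangle}$, which is procyclic; thus every open subgroup is normal with finite cyclic quotient topologically generated by the image of $\sigma$, giving the same conclusion.
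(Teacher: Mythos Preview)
Your proof is correct and follows essentially the same approach as the paper: pass to the Galois closure, observe that the fixed field of the restriction of $\sigma$ is $\Kbar(\sigma)$ itself, and conclude via finite Galois theory that the extension is cyclic with Galois group generated by $\sigma|_M$. Your write-up is more careful than the paper's---you make the perfectness/separability point and the descent from the Galois closure back to $M$ explicit, and you add an equivalent profinite reformulation---but the substance is identical.
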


\begin{proof}
Let $M$ be a finite extension over $\Kbar(\sigma)$.
To show that $M / \Kbar(\sigma)$ is cyclic, we may assume that $M$ is Galois by replacing $M$ with its Galois closure over $\Kbar(\sigma)$.
By the definition of $\Kbar(\sigma)$, the fixed subfield of $M / \Kbar(\sigma)$ under $\sigma|_M$ is $\Kbar(\sigma)$.
Galois theory implies that the group generated by $\sigma|_M$ equals $\Gal(M / \Kbar(\sigma))$, which leads to the conclusion of the proposition.
\end{proof}

\begin{lemma}\label{lem-FinExtOfKbarsigma-new}
Let $K$ be a perfect field and $\sigma \in G_K$.
For each positive integer $n$, the field $\Kbar(\sigma)$ has at most one extension of degree $n$.
\end{lemma}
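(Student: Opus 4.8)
The plan is to combine Proposition~\ref{prop-FinExtOfKbarsigma-new} --- which guarantees that \emph{every} finite extension of $\Kbar(\sigma)$ is cyclic, hence Galois --- with the elementary fact that a finite cyclic group has exactly one subgroup of each index dividing its order. The role of the proposition is precisely to reduce the problem about arbitrary degree-$n$ extensions to one about Galois (even cyclic) degree-$n$ extensions, where the subgroup structure can be invoked.

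First I would take two finite extensions $M_1, M_2$ of $\Kbar(\sigma)$, each of degree $n$. Since any finite extension of $\Kbar(\sigma)$ is algebraic over $\Kbar(\sigma)$ and $\Kbar$ is algebraically closed, I may regard $M_1$ and $M_2$ as subfields of the fixed algebraic closure $\Kbar$, so it suffices to show $M_1 = M_2$ inside $\Kbar$. Form the compositum $M := M_1 M_2 \subseteq \Kbar$, which is again a finite extension of $\Kbar(\sigma)$. By Proposition~\ref{prop-FinExtOfKbarsigma-new}, the extension $M / \Kbar(\sigma)$ is cyclic; set $m := [M : \Kbar(\sigma)]$ and $C := \Gal(M / \Kbar(\sigma))$, a cyclic group of order $m$. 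Note $n = [M_i : \Kbar(\sigma)]$ divides $m$ for $i = 1, 2$.

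Next, since $M / \Kbar(\sigma)$ is Galois and $M_i$ is an intermediate field, $M / M_i$ is Galois with $\Gal(M / M_i) \le C$ of index $[M_i : \Kbar(\sigma)] = n$. But a cyclic group of order $m$ has exactly one subgroup of index $n$ (namely the unique subgroup of order $m/n$); hence $\Gal(M / M_1) = \Gal(M / M_2)$ as subgroups of $C$. Passing to fixed fields via the Galois correspondence gives $M_1 = M_2$, as desired. Equivalently, one may run the argument globally: $G_{\Kbar(\sigma)} = \overline{\langle \sigma \rangle}$ is a procyclic group, i.e.\ a quotient of $\hat{\mathbb{Z}}$, and therefore has at most one open subgroup of each index; since Proposition~\ref{prop-FinExtOfKbarsigma-new} ensures that every degree-$n$ extension of $\Kbar(\sigma)$ is Galois, the Galois correspondence immediately yields the claim.

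I do not anticipate a real obstacle here. The only genuinely necessary input beyond routine group theory is Proposition~\ref{prop-FinExtOfKbarsigma-new}: without knowing that arbitrary finite subextensions are normal, the degree of the compositum $M_1 M_2$ would not be controlled, and distinct non-normal extensions of the same degree could a priori coexist. Everything else is the standard subgroup structure of finite cyclic (or, in the global formulation, procyclic) groups.
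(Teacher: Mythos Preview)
Your proof is correct and follows essentially the same route as the paper: form the compositum $M_1 M_2$, invoke Proposition~\ref{prop-FinExtOfKbarsigma-new} to see it is cyclic over $\Kbar(\sigma)$, and then use the uniqueness of subgroups of a given index in a cyclic group to conclude $M_1 = M_2$. The additional procyclic reformulation you give is fine but not needed.
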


\begin{proof}
Let $M$ and $M'$ be two extensions over $\Kbar(\sigma)$ of the same degree $n$.
Then the composite field $M M'$ is a finite extension over $\Kbar(\sigma)$ and this extension is cyclic by Proposition~\ref{prop-FinExtOfKbarsigma-new}.
Since a cyclic extension has at most one subextension of the same degree, we have $M = M'$.
\end{proof}

\begin{proposition}\label{prop-ClassificationOfFinExtOfKbarsigma}
Let $K$ be a perfect field and $\sigma \in G_K$.
\begin{enumerate}[\textup{(\arabic{enumi})}]
    \item The field $\Kbar(\sigma^n)$ is a finite extension over $\Kbar(\sigma)$ for any positive integer $n$.
    \item Conversely, any finite extension $M$ over $\Kbar(\sigma)$ is of the form $\Kbar(\sigma^n)$ for some positive integer $n$.
\end{enumerate}
\end{proposition}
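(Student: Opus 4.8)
The plan is to recast everything in terms of the profinite group $H := \overline{\langle\sigma\rangle}$, the closed subgroup of $G_K$ topologically generated by $\sigma$, and then to exploit the transparent subgroup structure of procyclic groups and their quotients. As a preliminary observation, since the stabilizer in $G_K$ of any element of $\Kbar$ is open and therefore closed, an element of $\Kbar$ is fixed by $\sigma$ precisely when it is fixed by every element of $H$; hence $\Kbar(\sigma) = \Kbar^{H}$, infinite Galois theory gives $G_{\Kbar(\sigma)} = H$, and the finite subextensions of $\Kbar / \Kbar(\sigma)$ correspond to the open subgroups of $H$, with the index equal to the degree. The same observation shows $\Kbar(\sigma^n) = \Kbar^{\,\overline{\langle\sigma^n\rangle}}$ for every positive integer $n$.

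For part (1), I would examine the profinite quotient $Q := H / \overline{\langle\sigma^n\rangle}$. It is topologically generated by the image $\bar\sigma$ of $\sigma$, and since $\sigma^n$ lies in $\overline{\langle\sigma^n\rangle}$ we have $\bar\sigma^{\,n} = 1$ in $Q$. Thus the abstract subgroup $\langle\bar\sigma\rangle$ is finite of order dividing $n$; being finite it is closed, so it equals its own closure, which is all of $Q$. Hence $[\Kbar(\sigma^n):\Kbar(\sigma)] = [H : \overline{\langle\sigma^n\rangle}] = |Q|$ divides $n$, and in particular is finite.

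For part (2), let $M / \Kbar(\sigma)$ be a finite extension, say of degree $n$. By Proposition~\ref{prop-FinExtOfKbarsigma-new} this extension is cyclic with Galois group generated by $\sigma|_M$, which therefore has order $n$; consequently $\sigma^n$ acts trivially on $M$, i.e.\ $M \subseteq \Kbar(\sigma^n)$. On the other hand part (1) gives $[\Kbar(\sigma^n):\Kbar(\sigma)] \le n = [M : \Kbar(\sigma)]$, and combined with the inclusions $\Kbar(\sigma) \subseteq M \subseteq \Kbar(\sigma^n)$ this forces $M = \Kbar(\sigma^n)$, as desired. (One could equally avoid Proposition~\ref{prop-FinExtOfKbarsigma-new}: the open subgroup $U \le H$ corresponding to $M$ has $H/U$ generated by the image of $\sigma$, hence $\sigma^n \in U$ and $\overline{\langle\sigma^n\rangle}\subseteq U$, and the index bound from part (1) then upgrades this inclusion to an equality; or one could finish the argument invoking Lemma~\ref{lem-FinExtOfKbarsigma-new}.)

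I do not expect a serious obstacle here: the statement is essentially the classification of open subgroups of a procyclic profinite group together with the Galois correspondence. The only points demanding care are the topological bookkeeping — checking that the subgroups in play are closed, resp.\ open, so that the Galois correspondence and the ``index $=$ degree'' identity apply — and the small but crucial fact that a profinite group topologically generated by an element of finite order is already a finite cyclic group.
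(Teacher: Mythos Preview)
Your argument is correct, and it takes a genuinely different route from the paper. The paper argues element-by-element: for (1) it shows that each $x\in\Kbar(\sigma^n)$ has degree over $\Kbar(\sigma)$ dividing $n$ (using Proposition~\ref{prop-FinExtOfKbarsigma-new} to list the conjugates as $\sigma^i(x)$), then invokes Lemma~\ref{lem-FinExtOfKbarsigma-new} to conclude that $\Kbar(\sigma^n)$ is a finite union of subextensions of bounded degree; for (2) it shows $M\subset\Kbar(\sigma^n)$ as you do, but for the reverse inclusion it again works element-by-element, matching $\Kbar(\sigma)(x)$ with the unique subextension of $M$ of the right degree via Lemma~\ref{lem-FinExtOfKbarsigma-new}. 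Your approach instead passes through infinite Galois theory and the procyclic group $H=\overline{\langle\sigma\rangle}$: the quotient $H/\overline{\langle\sigma^n\rangle}$ is profinite and topologically generated by an element of order dividing $n$, hence finite of order dividing $n$, which gives (1) with the sharper bound $[\Kbar(\sigma^n):\Kbar(\sigma)]\mid n$; this bound then makes (2) a one-line squeeze once $M\subset\Kbar(\sigma^n)$ is known. Your proof is more conceptual and self-contained (it does not need Lemma~\ref{lem-FinExtOfKbarsigma-new} at all, and uses Proposition~\ref{prop-FinExtOfKbarsigma-new} only optionally), at the modest cost of invoking the profinite Galois correspondence and the fact that a closed subgroup of finite index in a profinite group is open; the paper's proof stays closer to classical finite Galois theory.
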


\begin{proof}
(1) Let $x \in \Kbar(\sigma^n)$.
By Proposition~\ref{prop-FinExtOfKbarsigma-new}, any conjugate of $x$ over $\Kbar(\sigma)$ is of the form $\sigma^i(x)$ for some integer $i$.
Since $\sigma^n(x) = x$, the degree of $\Kbar(\sigma)(x) / \Kbar(\sigma)$ divides $n$.
Hence the extension $\Kbar(\sigma^n) / \Kbar(\sigma)$ is a union of its subextensions of degree at most $n$.
By Lemma~\ref{lem-FinExtOfKbarsigma-new}, there are only finitely many subextensions of degree at most $n$.
This implies that $\Kbar(\sigma^n) / \Kbar(\sigma)$ is finite.

(2) Let $n$ be the degree of $M / \Kbar(\sigma)$.
Since $M / \Kbar(\sigma)$ is a cyclic extension whose Galois group is generated by $\sigma|_M$, we have $M \subset \Kbar(\sigma^n)$.
To show the inverse inclusion, let $x \in \Kbar(\sigma^n)$.
The same argument in the proof of (1) yields that the degree $n'$ of $\Kbar(\sigma)(x) / \Kbar(\sigma)$ divides $n$.
Using again the fact that $M / \Kbar(\sigma)$ is a cyclic extension of degree $n$, we can take a subextension $M'$ of $M / \Kbar(\sigma)$ of degree $n'$.
Then we have $M' = \Kbar(\sigma)(x)$ by Lemma~\ref{lem-FinExtOfKbarsigma-new} and this implies $x \in M$.
\end{proof}

\subsection{Reduction step}

Let $K$ be a finitely generated field over $\mathbb{Q}$.
In order to show Theorem~\ref{thm-KF}, it is obvious that we may assume $e = 1$.
Since the torally Kummer-faithfulness is preserved under taking a finite extension, we only consider the torally Kummer-faithfulness of $\Kbar(\sigma)$ itself.
Then what we have to prove is that the set
\[ S = \{\sigma \in G_K \mid \Kbar(\sigma) \text{ is not torally Kummer-faithful}\} \]
has measure zero in $G_K$.

Our approach to prove this is to express $S$ as a countable union and show that each component has measure zero.
Let us give an example of the expression:
\begin{align*}
    S &= \{\sigma \in G_K \mid \Gm(M)_\divisible \neq 0 \text{ for some finite extension } M / \Kbar(\sigma)\} \\
    &= \{\sigma \in G_K \mid a \in \Gm(\Kbar(\sigma^n))_\divisible \text{ for some } a \in \Kbar^\times \smallsetminus \{1\} \text{ and some } n \ge 1\} \\
    &= \bigcup_{a \in \Kbar^\times \smallsetminus \{1\}} \bigcup_{n \ge 1} S_{a, n},
\end{align*}
where
\[ S_{a, n} = \{\sigma \in G_K \mid a \in \Gm(\Kbar(\sigma^n))_\divisible\}. \]
Note that the second equality follows from Proposition~\ref{prop-ClassificationOfFinExtOfKbarsigma}.
Since the rightmost hand side is a countable union, it is sufficient to show each $S_{a, n}$ has measure zero in $G_K$.
However, there is some difficulty to deal with $S_{a, n}$ for general $a$.
A refinement of this approach enables us to assume that the extension $K(a) / K$ is cyclic in return for varying $K$.

\begin{proposition}\label{prop-TKF-reduction}
Let $K$ be a countable perfect field.
Suppose that, for any finite extension $K' / K$, $a \in \overline{K'}^\times \smallsetminus \{1\}$ with the extension $K'(a) / K'$ cyclic, and a positive integer $n$, the set
\[ \{\sigma \in G_{K'} \mid a \in \Gm(\overline{K'}(\sigma^n))_\divisible\} \]
has measure zero in $G_{K'}$.
Then any finite extension of $\Kbar(\sigma)$ is torally Kummer-faithful for almost all $\sigma \in G_K$.
\end{proposition}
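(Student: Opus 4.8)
The plan is to carry out the countable-union strategy sketched just above the statement, reducing each piece to the hypothesis by enlarging the base field. Since torally Kummer-faithfulness is inherited by finite extensions, it suffices to prove that
\[ S = \{\sigma \in G_K \mid \Kbar(\sigma) \text{ is not torally Kummer-faithful}\} \]
is a $\mu_{G_K}$-null set, and by the discussion above $S = \bigcup_{a \in \Kbar^\times \smallsetminus \{1\}} \bigcup_{n \ge 1} S_{a, n}$ with $S_{a,n} = \{\sigma \in G_K \mid a \in \Gm(\Kbar(\sigma^n))_\divisible\}$. Because $K$ is countable, $\Kbar$ — hence $\Kbar^\times \smallsetminus \{1\}$ — is countable, so this is a countable union, and it is enough to show that each $S_{a,n}$ is $\mu_{G_K}$-null.

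Fix $a \in \Kbar^\times \smallsetminus \{1\}$ and $n \ge 1$. As $a$ is algebraic over $K$, choose a finite Galois subextension $L/K$ of $\Kbar/K$ with $a \in L$ (e.g.\ the Galois closure of $K(a)/K$), and set $\Gamma = \Gal(L/K)$. For each cyclic subgroup $C \le \Gamma$ put $K_C = L^C$; then $L/K_C$ is cyclic, hence so is its subextension $K_C(a)/K_C$ (subextensions of cyclic extensions are cyclic, and $a \in L$). Two observations drive the argument. First, $G_K = \bigcup_C G_{K_C}$, the union running over the finitely many cyclic subgroups $C \le \Gamma$: given $\sigma \in G_K$, the subgroup $C = \langle \sigma|_L \rangle$ of $\Gamma$ is cyclic and $\sigma$ fixes $L^C = K_C$ pointwise, so $\sigma \in G_{K_C}$. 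Second, for $\sigma \in G_{K_C}$ one has $\Kbar = \overline{K_C}$ and hence $\Kbar(\sigma^n) = \overline{K_C}(\sigma^n)$, so that $S_{a,n} \cap G_{K_C}$ is exactly the set $\{\sigma \in G_{K_C} \mid a \in \Gm(\overline{K_C}(\sigma^n))_\divisible\}$ appearing in the hypothesis for $K' = K_C$.

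Now apply the hypothesis with $K' = K_C$: since $K_C(a)/K_C$ is cyclic and $a \in \overline{K_C}^\times \smallsetminus \{1\}$, the set $S_{a,n} \cap G_{K_C}$ is $\mu_{G_{K_C}}$-null. As $K_C/K$ is finite, $G_{K_C}$ is an open subgroup of $G_K$, and the normalized Haar measures satisfy $\mu_{G_K}(A) = [K_C : K]^{-1}\, \mu_{G_{K_C}}(A)$ for every measurable $A \subseteq G_{K_C}$; in particular a subset of $G_{K_C}$ is $\mu_{G_{K_C}}$-null if and only if it is $\mu_{G_K}$-null. Hence each $S_{a,n} \cap G_{K_C}$ is $\mu_{G_K}$-null, and $S_{a,n} = \bigcup_C (S_{a,n} \cap G_{K_C})$ is a finite union of null sets, so $\mu_{G_K}(S_{a,n}) = 0$. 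Taking the countable union over the pairs $(a,n)$ yields $\mu_{G_K}(S) = 0$, which gives the conclusion.

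The one step that is not pure bookkeeping is this reduction to a cyclic $K'(a)/K'$: the point is that the cyclic subgroups $C$ of the finite Galois group $\Gal(L/K)$ simultaneously make the fixed fields $L^C$ eligible for the hypothesis and cover $G_K$, since every element of $G_K$ has procyclic image in the finite quotient $\Gal(L/K)$. The remaining ingredients — countability of $\Kbar$, the behaviour of the normalized Haar measure under restriction to an open subgroup, and the stability of torally Kummer-faithfulness under finite extensions — are standard and can be dispatched quickly.
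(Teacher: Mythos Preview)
Your proof is correct and follows essentially the same approach as the paper: both reduce to the cyclic case by passing, for each $\sigma$, to the fixed field $K' = L^{\langle\sigma|_L\rangle}$ of $\sigma$ in a finite Galois extension $L/K$ containing $a$, so that $K'(a)/K'$ is cyclic and the hypothesis applies. Your organization is slightly more streamlined---you work directly with the decomposition $S=\bigcup_{a,n} S_{a,n}$ and the finite cover $G_K=\bigcup_{C} G_{K_C}$ over cyclic subgroups $C\le \Gal(L/K)$---whereas the paper first rewrites $S$ via auxiliary finite Galois extensions $F/K$ and then invokes Propositions~\ref{prop-FinExtOfKbarsigma-new} and~\ref{prop-ClassificationOfFinExtOfKbarsigma} to arrive at the same reduction.
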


\begin{proof}
By the definition of torally Kummer-faithfulness, we have
\begin{align*}
    S &= \{\sigma \in G_K \mid \Gm(M)_\divisible \neq 0 \text{ for some finite extension } M / \Kbar(\sigma)\} \\
    &= \{\sigma \in G_K \mid \Gm(F \Kbar(\sigma))_\divisible \neq 0 \text{ for some finite Galois extension } F / K\} \\
    &= \bigcup_{\substack{F / K \\ \text{finite Galois}}} \{\sigma \in G_K \mid \Gm(F \Kbar(\sigma))_\divisible \neq 0\} \\
    &= \bigcup_{\substack{F / K \\ \text{finite Galois}}} \bigcup_{a \in \Kbar^\times \smallsetminus \{1\}} \{\sigma \in G_K \mid a \in \Gm(F \Kbar(\sigma))_\divisible\}.
\end{align*}
Taking $F$ larger so that it contains $a$, we may assume $a \in F$, that is, it holds that
\[ S = \bigcup_{\substack{F / K \\ \text{finite Galois}}} \bigcup_{a \in F^\times \smallsetminus \{1\}} \{\sigma \in G_K \mid a \in \Gm(F \Kbar(\sigma))_\divisible\}. \]
Suppose that $\sigma \in G_K$ satisfies $a \in \Gm(F \Kbar(\sigma))_\divisible$ for some finite Galois extension $F / K$ and some $a \in F^\times \smallsetminus \{1\}$.
Set $K' = F \cap \Kbar(\sigma)$.
Then $\sigma$ fixes $K'$, so we have $\sigma \in G_{K'}$ and $\Kbar(\sigma) = \overline{K'}(\sigma)$.
By Proposition~\ref{prop-FinExtOfKbarsigma-new}, the Galois group
\[ \Gal(F / K') \cong \Gal(F \Kbar(\sigma) / \Kbar(\sigma)) \]
is cyclic.
This argument implies the equation
\[ S = \bigcup_{\substack{F / K \\ \text{finite Galois}}} \bigcup_{a \in F^\times \smallsetminus \{1\}} \bigcup_{\substack{K \subset K' \subset F \\ F / K'\text{: cyclic}}} \{\sigma \in G_{K'} \mid a \in \Gm(F \overline{K'}(\sigma))_\divisible\}. \]
Rearranging the order of taking the union and dropping the condition that $F / K$ is Galois yield the inclusion
\[ S \subset \bigcup_{\substack{K' / K \\ \text{finite}}} \bigcup_{\substack{F / K' \\ \text{cyclic}}} \bigcup_{a \in F^\times \smallsetminus \{1\}} \{\sigma \in G_{K'} \mid a \in \Gm(F \overline{K'}(\sigma))_\divisible\}. \]
Since any subextension of a cyclic extension is again cyclic, we may add the condition that $K'(a) / K'$ is cyclic in the union.
By Proposition~\ref{prop-ClassificationOfFinExtOfKbarsigma}, there is a positive integer $n$ such that $F \overline{K'}(\sigma) = \overline{K'}(\sigma^n)$.
Removing the condition $a \in F$, we can eliminate $F$ from the union and obtain
\[ S \subset \bigcup_{\substack{K' / K \\ \text{finite}}} \bigcup_{\substack{a \in \overline{K'}^\times \smallsetminus \{1\} \\ K'(a) / K'\text{: cyclic}}} \bigcup_{n \ge 1} \{\sigma \in G_{K'} \mid a \in \Gm(\overline{K'}(\sigma^n))_\divisible\}. \]
Since $K$ is countable, the right hand side is a countable union.
The assumption of the proposition says that each component in the union has measure zero.
Hence $S$ has also measure zero.
\end{proof}

\subsection{Estimating the measure}

In this subsection, let $K$ be a finitely generated field over $\mathbb{Q}$, $a \in \Kbar^\times \smallsetminus \{1\}$ with the extension $K(a) / K$ cyclic, and $n$ a positive integer.
Part of the discussion in this subsection has been addressed in our previous study~\cite{AT}.
We first consider the case where $a$ is a root of unity.

\begin{proposition}\label{prop-lpowerunity}
Assume that $a \in \Kbar \smallsetminus \{1\}$ is a root of unity.
Then $\mu(S_{a, n}) = 0$ for all positive integer $n$.
\end{proposition}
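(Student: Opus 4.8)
The plan is to recognize that divisibility of a nontrivial root of unity in $\Gm(\Kbar(\sigma^n))$ forces $\sigma^n$ to act trivially on an entire Pr\"ufer subgroup of $\Kbar^\times$, and then to bound the measure of the set of such $\sigma$ by the infinite-index argument for the cyclotomic tower over $K$. Fix the order $m \ge 2$ of $a$ and a prime $p$ with $p \mid m$. First I would note that, because $a$ is a root of unity, every solution $b \in \Kbar$ of $b^\ell = a$ is again a root of unity; since $\Kbar$ has characteristic zero it contains all roots of unity, so the roots of unity lying in $\Kbar(\sigma^n)$ are exactly those fixed by $\sigma^n$. Hence $a \in \Gm(\Kbar(\sigma^n))_\divisible$ if and only if $a$ is divisible in the group of roots of unity fixed by $\sigma^n$. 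Assuming this, for each $k \ge 1$ pick $b_k$ with $\sigma^n(b_k) = b_k$ and $b_k^{p^k} = a$; comparing the $p$-adic valuations of the orders of $b_k$ and of $b_k^{p^k} = a$ and using $p \mid m$, one gets that $p^k$ divides the order of $b_k$, so $\mu_{p^k} \subseteq \langle b_k \rangle \subseteq \Kbar(\sigma^n)$ and $\sigma^n$ fixes $\mu_{p^k}$. Letting $k \to \infty$ shows that $\sigma^n$ acts trivially on the group $\mu_{p^\infty}$ of all $p$-power roots of unity, so, setting $L = K(\mu_{p^\infty})$,
\[ S_{a, n} \subseteq \{\sigma \in G_K \mid \sigma^n|_L = \mathrm{id}_L\}. \]

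Next I would show the right-hand set has measure zero. As $K$ is finitely generated over $\mathbb{Q}$, the field $K \cap \overline{\mathbb{Q}}$ is a number field, so restriction identifies $\Gal(L/K)$ with an open, hence infinite, subgroup of $\Gal(\mathbb{Q}(\mu_{p^\infty})/\mathbb{Q}) \cong \mathbb{Z}_p^\times$; in particular $\Gal(L/K)$ has finite torsion, so the set $T = \{g \in \Gal(L/K) \mid g^n = 1\}$ is finite. Now $\{\sigma \in G_K \mid \sigma^n|_L = \mathrm{id}_L\}$ is the disjoint union over $g \in T$ of the sets $\{\sigma \in G_K \mid \sigma|_L = g\}$, each of which is a coset of $G_L$ and so has measure $\mu(G_L)$. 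Since $L = \bigcup_k K(\mu_{p^k})$ with $[K(\mu_{p^k}) : K] \to \infty$, the closed subgroup $G_L$ is contained in $G_{K(\mu_{p^k})}$, whose measure equals $[K(\mu_{p^k}) : K]^{-1} \to 0$; therefore $\mu(G_L) = 0$ and $\mu(S_{a,n}) \le |T| \cdot \mu(G_L) = 0$.

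The order computation and the measure-theoretic bookkeeping are routine; the only point requiring care is the first step, namely that for a nontrivial root of unity $a$ the condition $a \in \Gm(\Kbar(\sigma^n))_\divisible$ is equivalent to $\sigma^n$ acting trivially on $\mu_{p^\infty}$ for every prime $p$ dividing the order of $a$. Once this is in place, the statement follows from the elementary fact that a closed subgroup of infinite index in $G_K$ has Haar measure zero, applied to $G_{K(\mu_{p^\infty})}$. (Note that the hypothesis that $K(a)/K$ is cyclic is not used in this case.)
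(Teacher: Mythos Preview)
Your argument is correct. Both your proof and the paper's rest on the same key observation: if $a\in\Gm(\Kbar(\sigma^n))_\divisible$ then $\Kbar(\sigma^n)$ (a finite extension of $\Kbar(\sigma)$, by Proposition~\ref{prop-ClassificationOfFinExtOfKbarsigma}) contains $\mu_{p^\infty}$ for some prime $p$. The paper then simply cites \cite[Proposition~5.8]{AT} to conclude that the set of such $\sigma$ has measure zero, whereas you supply a direct proof of that fact: identify $\Gal(K(\mu_{p^\infty})/K)$ with an open subgroup of $\mathbb{Z}_p^\times$, note it has only finitely many $n$-torsion elements, and use that $G_{K(\mu_{p^\infty})}$ has infinite index and hence measure zero. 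Your route is more self-contained and makes transparent exactly what is being used (no deep input beyond the structure of $\mathbb{Z}_p^\times$), at the cost of a few extra lines; the paper's version is terser but relies on an external reference. One cosmetic remark: in your final paragraph you call the containment an ``equivalence''; you only need (and carefully prove) the forward implication, so phrasing it as an inclusion would be cleaner, even though the equivalence does in fact hold.
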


\begin{proof}
Since $a$ is a root of unity, the set $S_{a, n}$ is contained in the set
\[ \left\{\sigma \in G_K \middle| \begin{array}{l}
    \text{there exist a finite extension $M$ of $\Kbar(\sigma)$ and a prime number} \\
    \text{$\ell$ such that $M^\times$ contains all $\ell$-power roots of unity}
\end{array}\right\}. \]
This set has measure zero in $G_K$ by~\cite[Proposition~5.8]{AT}.
\end{proof}

Next we consider the case where $a$ is not a root of unity.
Let $n$ be a positive integer.
Since $S_{a, n} \subset S_{a, n'}$ for any positive multiple $n'$ of $n$, we may assume that $n$ is a multiple of $r = [K(a) : K]$.
For a prime number $\ell$, we set
\[ T_\ell = \{\sigma \in G_K \mid \text{some $\ell$-th root of $a$ belongs to $\Kbar(\sigma^n)$}\}. \]
Then $S_{a, n} \subset \bigcap_{\ell\text{: prime}} T_\ell$.
Let $\varphi$ be the Euler totient function and $\zeta_m \in \Kbar$ a primitive $m$-th root of unity for a positive integer $m$.

\begin{lemma}[{\cite[Lemma~5.7]{AT}}]\label{lem-mathcalLa}
Let $K$ be a finitely generated field over $\mathbb{Q}$ and $a$ an element in $K^\times$ which is not a root of unity.
Then there exists a set $\Lambda_a$ of prime numbers $\ell \ge 3$ such that:
\begin{itemize}
    \item All but finitely many prime numbers belong to $\Lambda_a$;
    \item if $n = \ell_1 \cdots \ell_r$, where $\ell_1, \ldots, \ell_r$ are distinct prime numbers in $\Lambda_a$, then the splitting field of $X^n - a$ over $K$ is of degree $n \varphi(n)$ over $K$.
\end{itemize}
\end{lemma}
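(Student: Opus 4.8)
The plan is to take $\Lambda_a$ to be the complement (among primes $\ell \ge 3$) of a finite set $S$ of primes, and to analyse the tower $K \subseteq K(\zeta_n) \subseteq M_n$, where $M_n := K(\zeta_n, a^{1/n})$ is the splitting field of $X^n - a$ over $K$ for a fixed $n$-th root $a^{1/n}$. Since $[K(\zeta_n):K]$ divides $\varphi(n)$ and $[M_n:K(\zeta_n)]$ divides $n$, the degree $[M_n:K]$ always divides $n\varphi(n)$, so it suffices to choose $S$ finite so that for every squarefree $n = \ell_1\cdots\ell_r$ with all $\ell_i \notin S$ one has both (i) $[K(\zeta_n):K] = \varphi(n)$ and (ii) $[M_n:K(\zeta_n)] = n$; multiplicativity of degrees then gives $[M_n:K] = n\varphi(n)$, and the first bullet of the lemma is automatic because $S$ is finite.

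For (i): since $K$ is finitely generated over $\mathbb{Q}$, the relative algebraic closure $K_0 := K\cap\overline{\mathbb{Q}}$ is a number field, and $K\cap\mathbb{Q}(\zeta_n) = K_0\cap\mathbb{Q}(\zeta_n)$. For squarefree $n$ the inertia subgroup at $\ell_i$ in $\Gal(\mathbb{Q}(\zeta_n)/\mathbb{Q}) = \prod_j(\mathbb{Z}/\ell_j)^{\times}$ is exactly the factor $(\mathbb{Z}/\ell_i)^{\times}$, so the only subfield of $\mathbb{Q}(\zeta_n)$ unramified above all the $\ell_i$ is $\mathbb{Q}$ itself. Hence, placing into $S$ the finitely many primes ramified in $K_0/\mathbb{Q}$, we get $K_0\cap\mathbb{Q}(\zeta_n) = \mathbb{Q}$, so $[K(\zeta_n):K] = [\mathbb{Q}(\zeta_n):\mathbb{Q}] = \varphi(n)$.

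For (ii): each $a^{1/\ell_i} = (a^{1/n})^{n/\ell_i}$ lies in $M_n$, so $[K(\zeta_n)(a^{1/\ell_i}):K(\zeta_n)]$, which divides the prime $\ell_i$, divides $[M_n:K(\zeta_n)]$; if it equals $\ell_i$ for every $i$ then $[M_n:K(\zeta_n)]$ is divisible by $\operatorname{lcm}(\ell_1,\dots,\ell_r) = n$ and, dividing $n$, equals $n$. So it suffices to ensure $a\notin(K(\zeta_n)^{\times})^{\ell}$ for each $\ell = \ell_i$. Suppose not: then $a^{1/\ell}\in K(\zeta_n)$, and since $\ell\mid n$ we have $\zeta_\ell\in K(\zeta_n)$, so the splitting field $L := K(\zeta_\ell, a^{1/\ell})$ of $X^\ell - a$ over $K$ lies in the abelian extension $K(\zeta_n)/K$ and is thus abelian over $K$. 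But $\Gal(L/K)$ embeds in $\mathbb{F}_\ell\rtimes\mathbb{F}_\ell^{\times}$ through its action on the roots of $X^\ell - a$, with linear part $g\mapsto\chi(g)$ (the mod-$\ell$ cyclotomic character) of image $\Gal(K(\zeta_\ell)/K)$. Once we also remove from $\Lambda_a$ the prime $2$ and the finitely many odd primes $\ell$ with $\zeta_\ell\in K$, this linear part is nontrivial; since in $\mathbb{F}_\ell\rtimes\mathbb{F}_\ell^{\times}$ a nontrivial linear element does not commute with any nontrivial translation, abelianness of $\Gal(L/K)$ forces its translation subgroup to be trivial, i.e.\ $a^{1/\ell}\in K(\zeta_\ell)$, whence $[K(a^{1/\ell}):K]$ divides $\gcd(\ell,\ell-1) = 1$ and $a\in(K^{\times})^{\ell}$.

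It remains to place into $S$ the primes $\ell$ with $a\in(K^{\times})^{\ell}$ and to see there are finitely many of them; this is the only genuinely arithmetic point, and the place to be careful. If $a\notin K_0$ then $a$ is transcendental over $\mathbb{Q}$ and there is a discrete rank-one valuation $v$ of $K$ with $v(a)\ne 0$, so an equation $a = c^{\ell}$ forces $\ell$ to divide $v(a)$ inside $\frac{1}{N}\mathbb{Z}$ for a fixed $N$, bounding $\ell$. If $a\in K_0^{\times}$, any $\ell$-th root of $a$ in $K$ is algebraic over $\mathbb{Q}$, hence lies in $K_0$, reducing us to the number field $K_0$: either some finite place $\mathfrak{p}$ has $v_{\mathfrak{p}}(a)\ne 0$, again bounding $\ell$, or $a$ is a unit of $\mathcal{O}_{K_0}$, and then (Dirichlet, together with the fact that $a$ is not a root of unity) writing $a$ in a $\mathbb{Z}$-basis of $\mathcal{O}_{K_0}^{\times}$ modulo its roots of unity shows $\ell$ divides a fixed nonzero integer. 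Taking $S$ to be the union of all the finitely many exclusions made above, $\Lambda_a$ is the desired set. The main obstacle is precisely this finiteness step together with the bookkeeping of which finitely many primes must be excluded; one could instead shortcut it by a Lehmer-type lower bound for heights in abelian extensions, but the elementary reduction to $K_0$ avoids any such input.
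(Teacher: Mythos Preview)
The paper does not prove this lemma; it simply quotes it from \cite[Lemma~5.7]{AT} and uses it as a black box. So there is no ``paper's own proof'' to compare against. Your argument, however, is a correct self-contained proof and is worth recording.

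A few remarks on the details. In step~(i) your ramification argument is clean: for squarefree $n$ the inertia groups at the $\ell_i$ generate all of $\Gal(\mathbb{Q}(\zeta_n)/\mathbb{Q})$, so once the $\ell_i$ avoid the finitely many ramified primes of $K_0/\mathbb{Q}$ you get $K\cap\mathbb{Q}(\zeta_n)=\mathbb{Q}$ and hence $[K(\zeta_n):K]=\varphi(n)$. In step~(ii) the reduction ``$a\in(K(\zeta_n)^\times)^\ell \Rightarrow a\in(K^\times)^\ell$'' via the non-abelianness of $\mathbb{F}_\ell\rtimes\mathbb{F}_\ell^\times$ is exactly the classical Schinzel-type argument for abelian radical extensions, and your commutator computation is right: $(b,u)$ commutes with $(c,1)$ iff $(u-1)c=0$.

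The only place to tighten the exposition is the finiteness of $\{\ell : a\in(K^\times)^\ell\}$ in the transcendental case. The sentence about ``$v(a)$ inside $\tfrac{1}{N}\mathbb{Z}$'' is slightly muddled; it is cleaner to say: choose a normal projective model $X/K_0$ with function field $K$; since $K_0$ is algebraically closed in $K$ one has $H^0(X,\mathcal{O}_X)=K_0$, so $a\notin K_0$ forces $a$ to have a pole along some prime divisor $D$, and then $v_D(a)\in\mathbb{Z}\smallsetminus\{0\}$, so $a=c^\ell$ gives $\ell\mid v_D(a)$. The number-field case via Dirichlet's unit theorem is fine as written. With these cosmetic adjustments your proof is complete and is presumably close in spirit to the argument in \cite{AT}, which likewise proceeds through Kummer theory over $K(\zeta_n)$.
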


Let $\Lambda$ be the set of prime numbers $\ell$ satisfying the following conditions:
\begin{itemize}
    \item $\ell$ belongs to $\Lambda_{a'}$ for all conjugates $a'$ of $a$ over $K$, where $\Lambda_{a'}$ is the set of prime numbers obtained from Lemma~\ref{lem-mathcalLa} applying to $K(a)$ and $a'$ (Note that $a' \in K(a)$ since the extension $K(a) / K$ is Galois);
    \item $\ell$ is congruent to $1$ modulo $r$;
    \item $\ell$ does not divide $n$.
\end{itemize}
Then $\Lambda$ contains all but finitely many prime numbers congruent to $1$ modulo $r$ and thus has positive Dirichlet density~\cite[Corollary~7.3.2]{FriedJ}.

Let $\ell \in \Lambda$.
Since $[K(a, \zeta_\ell) : K(a)] = \ell - 1$, we have
\[ [K(a, \zeta_\ell) : K] = [K(a, \zeta_\ell) : K(a)] [K(a) : K] = [K(\zeta_\ell) : K] [K(a) : K]. \]
Hence the extensions $K(\zeta_\ell)$ and $K(a)$ are linearly disjoint over $K$ and we can take $\tau \in G_K$ satisfying:
\begin{itemize}
    \item $\tau$ fixes $\zeta_\ell$;
    \item the restriction of $\tau$ to $K(a)$ is a generator of the cyclic group $\Gal(K(a) / K)$.
\end{itemize}
By the second condition, we have
\[ G_K = \bigsqcup_{j = 0}^{r - 1} G_{K(a)} \tau^j. \]

Fix $0 \le j \le r - 1$.
Then the field $K' = K(a) \cap \Kbar(\sigma)$ does not depend on $\sigma \in G_{K(a)} \tau^j$ and the restriction of $\tau^j$ to $K(a)$ is a generator of the cyclic group $\Gal(K(a) / K')$.
Let $a_1 = a, a_2, \ldots, a_s$ be all conjugates of $a$ over $K'$ indexed so that
\[ a_1 \overset{\tau^j}{\longmapsto} a_2 \overset{\tau^j}{\longmapsto} \cdots \overset{\tau^j}{\longmapsto} a_s \overset{\tau^j}{\longmapsto} a_1. \]
Every $\sigma \in G_{K'}$ induces a permutation of the set $\{1, 2, \ldots, s\}$ of indices via the action on the set $\{a_1, a_2, \ldots, a_s\}$.
Abusing notation, we also write $\sigma$ for this permutation as an element in the symmetric group $\mathfrak{S}_s$ of $s$ letters.
Then $\sigma$ belongs to the subgroup $\langle(1 \, 2 \, \cdots \, s)\rangle \subset \mathfrak{S}_s$.
Note that the permutation $(1 \, 2 \, \cdots \, s)$ is the one to which $\tau^j$ corresponds.
Fix an $\ell$-th root $\alpha_i$ of $a_i$ for each $1 \le i \le s$.
We define a representation $\rho_{\ell, j}: G_{K'} \to \GL_{s + 1}(\mathbb{F}_\ell)$ as
\[ \rho_{\ell, j}(\sigma) = \left(\begin{array}{c|ccc}
    u(\sigma) & b_1(\sigma) & \cdots & b_s(\sigma) \\ \hline
     & \multicolumn{3}{c}{P(\sigma)}
\end{array}
\right),
\]
where $u(\sigma) \in \mathbb{F}_\ell^\times$, $b_i(\sigma) \in \mathbb{F}_\ell$ ($1 \le i \le s$), and $P(\sigma) \in \GL_s(\mathbb{F}_\ell)$ are defined as follows:
\begin{itemize}
    \item $\sigma(\zeta_\ell) = \zeta_\ell^{u(\sigma)}$,
    \item $\sigma(\alpha_i) = \zeta_\ell^{b_i(\sigma)} \alpha_{\sigma(i)}$,
    \item $P(\sigma)$ is a permutation matrix corresponding to $\sigma$; e.g.,
    \[ P(\tau^j) = \begin{pmatrix}
        & & & 1 \\
        1 & & & \\
        & \ddots & & \\
        & & 1 &
    \end{pmatrix}.
    \]
\end{itemize}
Since $n$ is a multiple of $s$, the matrix $P(\sigma^n)$ is always identity and $\sigma^n \in G_{K(a)}$.
By definition, for $\sigma \in G_K$ with $u(\sigma^n) = 1$, we have $\sigma \in T_\ell$ if and only if $b_1(\sigma^n) = 0$.

The image of any element in $G_{K(a)} \tau^j$ is of the form
\[ \left(\begin{array}{c|cccc}
    u & b_1 & \cdots & b_{s - 1} & b_s \\ \hline
     & & & & 1 \\
     & 1 & & & \\
     & & \ddots & & \\
     & & & 1 &
\end{array}
\right)
\]
for $u \in \mathbb{F}_\ell^\times$ and $b_i \in \mathbb{F}_\ell$ ($1 \le i \le s$), and the composition of maps
\[ G_{K(a)} \tau^j \xrightarrow{\rho_{\ell, j}} \rho_{\ell, j}(G_{K(a)} \tau^j) \xrightarrow{{\scriptsize \begin{array}{c}
    \text{extracting the} \\
    \text{upper left entry}
\end{array}}} \mathbb{F}_\ell^\times \]
is surjective.

Since $[K(a, \zeta_\ell) : K(a)] = \ell - 1$, we have the decomposition into cosets
\[ G_{K(a)} = \bigsqcup_{u \in \mathbb{F}_\ell^\times} G_{K(a, \zeta_\ell)} \eta_u, \]
where $\eta_u \in G_{K(a)}$ satisfies $\eta_u(\zeta_\ell) = \zeta_\ell^u$.
Set $F = K(\sqrt[\ell]{a_1}, \ldots, \sqrt[\ell]{a_s})$.
By Kummer theory, the Galois group $\Gal(F / K(a, \zeta_\ell))$ is isomorphic to $\mathbb{F}_\ell^{s'}$ for $1 \le s' \le s$.
Via the natural surjection from $\mathbb{F}_\ell^{s'}$ to the projective space $\mathbb{P}_{\mathbb{F}_\ell}^{s' - 1}$, we define the equivalence relation $\sim$ on $\Gal(F / K(a, \zeta_\ell)) \smallsetminus \{1\} = \left(G_F \middle\backslash G_{K(a, \zeta_\ell)}\right) \smallsetminus \{1\}$.
Then we have
\[ G_{K(a, \zeta_\ell)} = \left(\bigsqcup_{\xi \in X_{\ell, j}} \bigsqcup_{h = 1}^{\ell - 1} G_F \xi^h \right) \sqcup G_F, \]
where $X_{\ell, j} = \left.\left(\left(G_F \middle\backslash G_{K(a, \zeta_\ell)}\right) \smallsetminus \{1\} \right) \middle/ {\sim}\right.$.
Choosing a transversal of $X_{\ell, j}$, we view it as a subset in $\left(G_F \middle\backslash G_{K(a, \zeta_\ell)}\right) \smallsetminus \{1\}$.
Therefore we obtain
\begin{align*}
    G_{K(a)} \tau^j &= \bigsqcup_{u \in \mathbb{F}_\ell^\times} \left(\left(\bigsqcup_{\xi \in X_{\ell, j}} \bigsqcup_{h = 1}^{\ell - 1} G_F \xi^h \right) \sqcup G_F\right) \eta_u \tau^j \\
    &= \left(\bigsqcup_{u \in \mathbb{F}_\ell^\times} \bigsqcup_{\xi \in X_{\ell, j}} \bigsqcup_{h = 1}^{\ell - 1} G_F \xi^h \eta_u \tau^j\right) \sqcup \left(\bigsqcup_{u \in \mathbb{F}_\ell^\times} G_F \eta_u \tau^j\right).
\end{align*}
Namely, there are $\ell^{s'} (\ell - 1)$ cosets in $G_F \backslash G_{K'}$ contained in $G_{K(a)} \tau^j$ and each of them is uniquely of the form $G_F \xi^h \eta_u \tau^j$ for $u \in \mathbb{F}_\ell^\times$, $\xi \in X_{\ell, j}$, and $1 \le h \le \ell - 1$ or of the form $G_F \eta_u \tau^j$ for $u \in \mathbb{F}_\ell^\times$.

Let $\xi \in X_{\ell, j}$.
Since $\xi \notin G_F$, the image $\rho_{\ell, j}(\xi)$ is of the form
\[ \begin{pmatrix}
    1 & t_1 & \cdots & t_s \\
     & 1 & & \\
     & & \ddots & \\
     & & & 1
\end{pmatrix}
\]
with $(t_1 \, \cdots \, t_s) \neq 0$.
Since the $s \times s$ Vandermonde matrix consisting of geometric progressions with common ratio $u$ for distinct $s$-th roots $u$ of unity in $\mathbb{F}_\ell$ has nonzero determinant, there exists $u_0 \in \mathbb{F}_\ell$ such that $u_0^s = 1$ and 
\[ u_0^{s - 1} t_1 + u_0^{s - 2} t_2 + \cdots + t_s \neq 0. \]
Note that, as $s$ divides $r$ and $r$ divides $\ell - 1$, all $s$-th roots of unity belong to $\mathbb{F}_\ell$.
Recalling that $\tau^j$ fixes $\zeta_\ell$, we write
\[ \rho_{\ell, j}(\eta_{u_0} \tau^j) = \left(\begin{array}{c|cccc}
    u_0 & b_1 & \cdots & b_{s - 1} & b_s \\ \hline
     & & & & 1 \\
     & 1 & & & \\
     & & \ddots & & \\
     & & & 1 &
\end{array}
\right). \]
Then we have
\[ \rho_{\ell, j}(\xi^h \eta_{u_0} \tau^j) = \left(\begin{array}{c|ccccc}
    u_0 & b_1 + h t_2 & b_2 + h t_3 & \cdots & b_{s - 1} + h t_s & b_s + h t_1 \\ \hline
     & & & & & 1 \\
     & 1 & & & & \\
     & & 1 & & &\\
     & & & \ddots & & \\
     & & & & 1 &
\end{array}
\right) \]
for any integer $h$.
Raising both sides to the $s$-th power gives
\[ \rho_{\ell, j}((\xi^h \eta_{u_0} \tau^j)^s) = \begin{pmatrix}
    1 & v_h & \cdots \\
     & 1 & \\
     & & \ddots
\end{pmatrix}
\]
with
\begin{align*}
    v_h &= u_0^{s - 1} (b_1 + h t_2) + u_0^{s - 2} (b_2 + h t_3) + \cdots + u_0 (b_{s - 1} + h t_s) + (b_s + h t_1) \\
    &= (u_0^{s - 1} b_1 + u_0^{s - 2} b_2 + \cdots + b_s) + h u_0 (u_0^{s - 1} t_1 + u_0^{s - 2} t_2 + \cdots + t_s).
\end{align*}
Since $u_0 (u_0^{s - 1} t_1 + u_0^{s - 2} t_2 + \cdots + t_s) \neq 0$, this $v_h$ varies over all elements in $\mathbb{F}_\ell$ when $h$ varies over the range $0 \le h \le \ell - 1$.
Removing $h = 0$, we have at least $\ell - 2$ values of $h$ in the range $1 \le h \le \ell - 1$ with $v_h \neq 0$.
For such $h$, we have
\[ \rho_{\ell, j}((\xi^h \eta_{u_0} \tau^j)^n) = \begin{pmatrix}
    1 & \dfrac{n}{s} v_h & \cdots \\
     & 1 & \\
     & & \ddots
\end{pmatrix}.
\]
Since $n$ is not divisible by $\ell$, we have $(n / s) v_h \neq 0$.
This means that any element in the coset $G_F \xi^h \eta_{u_0} \tau^j$ is not a member of $T_\ell$.

To summarize, for each $\xi \in X_{\ell, j}$, we obtain at least $\ell - 2$ cosets of $G_F \backslash G_{K'}$ contained in $G_{K(a)} \tau^j$ having empty intersection with $T_\ell$.
By the uniqueness of the expression of each coset of the form $G_F \xi^h \eta_u \tau^j$, there are at least $(\ell - 2) (\# X_{\ell, j})$ cosets contained in $G_{K(a)} \tau^j$ with this property.
Since $\# X_{\ell, j} = (\ell^{s'} - 1) / (\ell - 1)$, we have
\[ \frac{\mu(T_\ell \cap G_{K(a)} \tau^j)}{\mu(G_{K(a)} \tau^j)} \le 1 - \frac{(\ell - 2) (\# X_{\ell, j})}{\ell^{s'} (\ell - 1)} = 1 - \frac{\ell - 2}{(\ell - 1)^2} \frac{\ell^{s'} - 1}{\ell^{s'}} \le 1 - \frac{1}{2 \ell}. \]
Here we use $\ell \ge 3$ and $s' \ge 1$ in the last inequality.
As a consequence, we obtain the following result.

\begin{proposition}\label{prop-measureofTellcapcoset}
We have
\[ \mu(T_\ell \cap G_{K(a)} \tau) \le \frac{1}{r} \left(1 - \frac{1}{2 \ell}\right) \]
for any $\ell \in \Lambda$ and any $\tau \in G_K$.
\end{proposition}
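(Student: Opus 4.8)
The plan is to package together the coset-by-coset estimates produced in the discussion above. Write $\tau_0 \in G_K$ for the specific element constructed above, so that $\tau_0$ fixes $\zeta_\ell$, the restriction $\tau_0|_{K(a)}$ generates $\Gal(K(a)/K)$, and $G_K = \bigsqcup_{j=0}^{r-1} G_{K(a)}\tau_0^j$; I reserve the symbol $\tau$ for the arbitrary element appearing in the statement. Since $G_{K(a)}$ has index $r$ in $G_K$ and $\mu$ is the normalized Haar measure, every one of these $r$ cosets has measure $1/r$.

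First I would invoke the computation carried out above for each fixed $0 \le j \le r-1$: combining the description of the $\ell^{s'}(\ell-1)$ cosets of $G_F \backslash G_{K'}$ contained in $G_{K(a)}\tau_0^j$ with the Vandermonde argument, one obtains
\[ \frac{\mu(T_\ell \cap G_{K(a)}\tau_0^j)}{\mu(G_{K(a)}\tau_0^j)} \le 1 - \frac{\ell-2}{(\ell-1)^2}\,\frac{\ell^{s'}-1}{\ell^{s'}} \le 1 - \frac{1}{2\ell}, \]
the last inequality using $\ell \ge 3$ and $s' \ge 1$. Multiplying through by $\mu(G_{K(a)}\tau_0^j) = 1/r$ gives $\mu(T_\ell \cap G_{K(a)}\tau_0^j) \le \frac{1}{r}\bigl(1 - \frac{1}{2\ell}\bigr)$ for every $j$.

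It then remains to reduce an arbitrary coset $G_{K(a)}\tau$ to one of these. Two elements of $G_K$ lie in the same right coset of $G_{K(a)}$ exactly when their restrictions to the Galois extension $K(a)/K$ agree; since $\tau_0|_{K(a)}$ generates the cyclic group $\Gal(K(a)/K)$ of order $r$, there is a unique $j \in \{0, \dots, r-1\}$ with $\tau|_{K(a)} = \tau_0^j|_{K(a)}$, whence $G_{K(a)}\tau = G_{K(a)}\tau_0^j$. Applying the bound just established with this $j$ yields $\mu(T_\ell \cap G_{K(a)}\tau) \le \frac{1}{r}\bigl(1 - \frac{1}{2\ell}\bigr)$, as claimed. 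The only genuine obstacle here is the one already overcome before the statement — the Vandermonde/projective-counting estimate showing that for each class $\xi \in X_{\ell,j}$ at least $\ell-2$ of the cosets $G_F \xi^h \eta_{u_0}\tau_0^j$ are disjoint from $T_\ell$, together with $\#X_{\ell,j} = (\ell^{s'}-1)/(\ell-1)$; granting that, the proposition itself is a bookkeeping step.
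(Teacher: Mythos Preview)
Your proposal is correct and matches the paper's approach exactly: the paper carries out the entire argument in the discussion preceding the proposition (the coset decomposition $G_K = \bigsqcup_j G_{K(a)}\tau^j$, the Vandermonde/projective-counting estimate giving at least $(\ell-2)\,\#X_{\ell,j}$ cosets disjoint from $T_\ell$, and the numerical inequality using $\ell \ge 3$, $s' \ge 1$), then states the proposition as a direct consequence. Your only addition is to make explicit the trivial observation that an arbitrary coset $G_{K(a)}\tau$ coincides with some $G_{K(a)}\tau_0^j$, which the paper leaves to the reader.
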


\subsection{Independence and the completion of the proof}

In this subsection, we discuss the independence of the subsets $T_\ell$ and complete the proof for the torally Kummer-faithfulness part of Theorem~\ref{thm-KF}.
We use the following result, which is a special case of the theorem proved by Perucca, Sgobba, and Tronto~\cite{PST}.

\begin{theorem}[Perucca--Sgobba--Tronto~{\cite[Theorem~1.1]{PST}}]\label{PST}
Let $K$ be a number field and $a_1, \ldots, a_s \in K^\times$.
Suppose that the subgroup in $\Gm(K)$ generated by $a_1, \ldots, a_s$ is isomorphic to $\mathbb{Z}^s$.
Then there exists a positive integer $N$ depending only on $K, a_1, \ldots, a_s$ such that
\[ [K(\sqrt[m]{a_1}, \ldots, \sqrt[m]{a_s}) : K] = m^s \varphi(m) \]
for any positive integer $m$ coprime to $N$.
In particular, the family of field extensions $\{K(\sqrt[\ell]{a_1}, \ldots, \sqrt[\ell]{a_s})\}_{\ell \nmid N}$ is linearly disjoint over $K$.
\end{theorem}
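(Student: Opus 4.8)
The plan is to separate the degree $m^s\varphi(m)$ into a cyclotomic contribution $\varphi(m)$ and a Kummer contribution $m^s$, and to show both are attained for every $m$ coprime to a suitable integer $N = N(K, a_1, \ldots, a_s)$. Write $\Delta = \langle a_1, \ldots, a_s\rangle \subseteq K^\times$, which by hypothesis is free on $a_1, \ldots, a_s$. For $m \ge 1$ the field $K(\sqrt[m]{a_1}, \ldots, \sqrt[m]{a_s})$ --- the subfield of $\overline{K}$ generated by all $m$-th roots of the $a_i$ --- contains $\zeta_m$ and equals $K(\zeta_m, \alpha_1, \ldots, \alpha_s)$ for any fixed $m$-th roots $\alpha_i$ of $a_i$. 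Since
\[ [K(\zeta_m, \alpha_1, \ldots, \alpha_s):K] = [K(\zeta_m, \alpha_1, \ldots, \alpha_s):K(\zeta_m)]\,[K(\zeta_m):K], \]
where $[K(\zeta_m):K]$ divides $\varphi(m)$ and the first factor equals, by Kummer theory over $K(\zeta_m) \ni \zeta_m$, the order of the image of $\Delta$ in $K(\zeta_m)^\times/(K(\zeta_m)^\times)^m$, hence divides $\#(\Delta/\Delta^m) = m^s$, it suffices to find $N$ such that for every $m$ coprime to $N$ one has $[K(\zeta_m):K] = \varphi(m)$ and $\Delta \cap (K(\zeta_m)^\times)^m = \Delta^m$ (the latter being equivalent to the Kummer degree being exactly $m^s$). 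The final ``linear disjointness'' assertion then follows formally from multiplicativity of degrees.

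The cyclotomic equality is immediate for a number field: $K \cap \mathbb{Q}^{\mathrm{ab}}$ is a finite abelian extension of $\mathbb{Q}$, so it lies in some $\mathbb{Q}(\zeta_{N_0})$, and for $m$ coprime to $N_0$ we get $K \cap \mathbb{Q}(\zeta_m) \subseteq \mathbb{Q}(\zeta_{N_0}) \cap \mathbb{Q}(\zeta_m) = \mathbb{Q}$, so $K$ and $\mathbb{Q}(\zeta_m)$ are linearly disjoint over $\mathbb{Q}$ and $[K(\zeta_m):K] = \varphi(m)$. I would thus require $N_0 \mid N$.

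The substance is the Kummer equality $\Delta \cap (K(\zeta_m)^\times)^m = \Delta^m$. Since an element of $K^\times$ is an $m$-th power in $K(\zeta_m)$ exactly when it is an $\ell^{v_\ell(m)}$-th power in $K(\zeta_m)$ for each prime $\ell \mid m$, and since passing from $K(\zeta_{\ell^k})$ to $K(\zeta_m)$ --- a prime-to-$\ell$ extension --- creates no new $\ell^k$-th powers, the problem reduces to proving $\Delta \cap (K(\zeta_{\ell^k})^\times)^{\ell^k} = \Delta^{\ell^k}$ for each prime power $\ell^k$ with $\ell \nmid N$. For these I would argue by two descents. \emph{Descent to $K$}: if $b := \prod_i a_i^{e_i}$ becomes an $\ell^k$-th power in $K(\zeta_{\ell^k})$, then an $\ell^k$-th root of $b$ generates a subextension of the abelian extension $K(\zeta_{\ell^k})/K$, which is ramified only above $\ell$; for $\ell$ outside a finite set depending only on $K$ and the $a_i$ (controlling the ramification of the $a_i$ and the cyclotomic entanglement of $K$ itself) this forces $b \in (K^\times)^{\ell^k}$ --- precisely the step ruling out entanglement such as $\sqrt{-3} \in \mathbb{Q}(\zeta_3)$. \emph{Descent inside $K^\times$}: let $\widetilde\Delta$ be the saturation of $\Delta$ in $K^\times$ (those $x \in K^\times$ with a positive power in $\Delta$), which contains $\Delta$ with finite index; if $\ell$ divides neither $[\widetilde\Delta:\Delta]$ nor $\#\mu_K$, then $\prod_i a_i^{e_i} \in (K^\times)^{\ell^k}$ forces its $\ell^k$-th root into $\Delta$ (multiplication by $\ell^k$ is bijective on the finite group $\widetilde\Delta/\Delta$), whence $\ell^k \mid e_i$ for all $i$ because $a_1, \ldots, a_s$ form a free basis of $\Delta$. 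Taking $N$ to be $N_0$ times the product, with suitable multiplicities, of the finitely many excluded primes --- all depending only on $K, a_1, \ldots, a_s$ --- completes the argument.

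The step I expect to be the main obstacle is the first descent when $k \ge 2$. For $m = \ell$ prime it is elementary and needs no bad primes: $[K(\zeta_\ell):K]$ divides $\ell - 1$, hence is prime to $\ell$, while $[K(\sqrt[\ell]{b}):K]$ divides $\ell$, so an $\ell$-th root of $b$ lying in $K(\zeta_\ell)$ already lies in $K$. For $k \ge 2$ one has $\ell \mid \varphi(\ell^k)$, this coprimality is lost, and the descent must instead be extracted from a genuine finiteness statement about cyclotomic--Kummer entanglement --- controlling ramification away from $\ell$ together with the $\ell$-part of the unit group of $K$ --- with a verification that the resulting exceptional set of primes is finite and depends only on $K$ and the $a_i$. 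That is exactly the content established by Perucca, Sgobba and Tronto.
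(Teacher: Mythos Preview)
The paper does not give its own proof of this theorem: it is stated as an external input, quoted from Perucca--Sgobba--Tronto~\cite[Theorem~1.1]{PST}, and used immediately afterwards in the proof of the torally Kummer-faithfulness part of Theorem~\ref{thm-KF}. There is therefore nothing in the paper to compare your argument against.

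That said, your outline is a sensible reconstruction of how one proves such a statement: splitting the degree into the cyclotomic factor $\varphi(m)$ and the Kummer factor $m^s$, handling the cyclotomic part via $K\cap\mathbb{Q}^{\mathrm{ab}}\subseteq\mathbb{Q}(\zeta_{N_0})$, and reducing the Kummer part prime-by-prime to showing $\Delta\cap(K(\zeta_{\ell^k})^\times)^{\ell^k}=\Delta^{\ell^k}$. You are also right that the case $k=1$ is elementary by the coprimality of $[K(\zeta_\ell):K]$ and $\ell$, and that the genuine work sits in the higher prime powers. One point to tighten in your ``descent to $K$'' step: saying that $K(\sqrt[\ell^k]{b})\subseteq K(\zeta_{\ell^k})$ is ramified only above $\ell$ is not by itself enough to conclude $b\in(K^\times)^{\ell^k}$, since $b$ could be supported only at primes above $\ell$ or be a unit; one needs an additional input (control on the $\ell$-divisibility of the image of $\Delta$ in suitable valuation and unit groups, or an explicit bound on the ``failure'' index as in \cite{PST}) to exclude finitely many $\ell$. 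You flag exactly this as the obstacle and defer to \cite{PST}, which is appropriate --- but as written your sketch is an outline that invokes the cited theorem at its crux rather than an independent proof.
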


\begin{proof}[Proof of Theorem~\textup{\ref{thm-KF}} (for the torally Kummer-faithfulness)]
It is sufficient to prove that the assumption of Proposition~\ref{prop-TKF-reduction} holds.
Let $K$ be a number field and $a \in \Kbar^\times \smallsetminus \{1\}$ with $K(a) / K$ cyclic.
What we need to show is that $\mu(S_{a, n}) = 0$ for any positive integer $n$.

The case where $a$ is a root of unity has been shown in Proposition~\ref{prop-lpowerunity}.
Suppose that $a$ is not a root of unity.
Let $H_a$ be the subgroup of $\Gm(K(a))$ generated by all conjugates of $a$ over $K$.
For a prime number $\ell$, we write $K(\sqrt[\ell]{H_a})$ for the extension over $K$ obtained by adjoining all $\ell$-th roots of all elements in $H_a$.

Suppose that $H_a$ has nontrivial torsion.
Since the torsion is finite, we can take a sufficiently large power of $a$, say $a^w$, so that the subgroup of $\Gm(K(a))$ generated by conjugates of $a^w$ over $K$ has trivial torsion.
This enables us to assume without loss of generality that $H_a$ has trivial torsion since $S_{a, n} \subset S_{a^w, n}$.

Let $\nu = r \mu$, where $r = [K(a) : K]$.
Then we may view the function $\nu$ as the normalized Haar measure on $G_{K(a)}$.
Let $\tau \in G_K$ and $\Lambda$ be the set of prime numbers $\ell$ satisfying the three conditions described after Lemma~\ref{lem-mathcalLa}.
For $\ell \in \Lambda$, whether $\sigma \in G_K$ belongs to $T_\ell$ is determined only by its restriction to $K(\sqrt[\ell]{H_a})$.
Hence the subset $(T_\ell \cap G_{K(a)} \tau) \tau^{- 1} \subset G_{K(a)}$ is a union of cosets of $G_{K(\sqrt[\ell]{H_a})} \backslash G_{K(a)}$.
By Theorem~\ref{PST}, the family of field extensions $\{K(\sqrt[\ell]{H_a})\}_{\ell \in \Lambda'}$ is linearly disjoint over $K(a)$ for some set $\Lambda'$ obtained by removing a finite number of prime numbers from $\Lambda$.
Then the family of measurable sets $\{(T_\ell \cap G_{K(a)} \tau) \tau^{- 1}\}_{\ell \in \Lambda'}$ is $\nu$-independent in $G_{K(a)}$~\cite[Lemma~21.3.7]{FriedJ}.
Combining Proposition~\ref{prop-measureofTellcapcoset} and the fact that $\Lambda'$ has positive Dirichlet density, we have
\begin{align*}
    \mu(S_{a, n} \cap G_{K(a)} \tau) &\le \mu\left(\bigcap_{\ell \in \Lambda'} T_\ell \cap G_{K(a)} \tau\right) = \mu\left(\bigcap_{\ell \in \Lambda'} (T_\ell \cap G_{K(a)} \tau) \tau^{- 1}\right) \\
    &= \frac{1}{r} \nu\left(\bigcap_{\ell \in \Lambda'} (T_\ell \cap G_{K(a)} \tau) \tau^{- 1}\right) = \frac{1}{r} \prod_{\ell \in \Lambda'} \nu((T_\ell \cap G_{K(a)} \tau) \tau^{- 1}) \\
    &\le \frac{1}{r} \prod_{\ell \in \Lambda'} \left(1 - \frac{1}{2 \ell}\right) = 0.
\end{align*}
Running $\tau$ and summing up over all cosets of $G_{K(a)} \backslash G_K$, we obtain $\mu(S_{a, n}) = 0$.
\end{proof}

\section{Proof for the AVKF-ness}\label{sec-AVKFpart}

This section studies the AVKF-ness of finite extensions of $\Kbar(\sigma)$.
As already mentioned in Introduction, the AVKF-ness part for $e \ge 2$ of Theorem~\ref{thm-KF} is proved by Jarden and Petersen~\cite{JP22}.
Even in the $e = 1$ case, much of their argument continues to hold.
Actually, it turns out that all we have to show is one claim because this is the only point where they used the assumption $e \ge 2$.
Let us start with describing what we need to show.
In what follows, let $K$ be a finitely generated field of $\mathbb{Q}$ (later, we will assume that $K$ is a number field).

\subsection{Overview of Jarden--Petersen's strategy}\label{sec-JP}

We first remark that it is of our interest how often $\Kbar(\sigma)$ itself is AVKF since AVKF-ness is preserved under finite extensions.
Moreover, using Weil's restriction for abelian varieties~\cite[Lemma~6.1]{JP22}, the AVKF-ness part of Theorem~\ref{thm-KF} is reduced to showing that the group $A(\Kbar(\sigma))$ has no nontrivial divisible point for almost all $\sigma \in G_K^e$ and all abelian varieties $A$ over $\Kbar(\sigma)$.

In Section~5 of their paper~\cite{JP22}, Jarden and Petersen proved that this claim is true for $e \ge 2$.
Lemma~5.1, Lemma~5.3, and Proposition~5.4 in~\cite{JP22} show without assuming $e \ge 2$ that this claim is reduced to that the following two statements hold for any finite extension $K'$ over $K$ and any abelian variety $A$ over $K'$:
\begin{itemize}
    \item $A(K') \cap A(\overline{K'}(\sigma))_\divisible \subset A(K')_\tor$ for almost all $\sigma \in G_{K'}^e$;
    \item $A(\overline{K'}(\sigma))[\ell^\infty]$ is finite for almost all $\sigma \in G_{K'}^e$ and all prime numbers $\ell$.
\end{itemize}
As the latter statement is already known to hold~\cite[Main Theorem]{JJ01}, we need to prove the former.
Jarden and Petersen proved a slightly stronger result.

\begin{lemma}[{\cite[Lemma~5.2]{JP22}}]\label{lem-JP-key}
Assume $e \ge 2$.
Let $A$ be an abelian variety over $K$.
Then $A(K) \cap \left(\bigcap_{\ell\text{\textup{{: prime}}}} \ell A(\Kbar(\sigma))\right) \subset A(K)_\tor$ for almost all $\sigma \in G_K^e$.
\end{lemma}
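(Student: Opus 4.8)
The plan is to follow the strategy of Jarden and Petersen~\cite{JP22}: rephrase membership in $\ell A(\Kbar(\sigma))$ as the existence of a common fixed point for an affine action of $\sigma_1,\dots,\sigma_e$ on $A[\ell]$, bound the resulting Haar measure by exploiting the hypothesis $e\ge 2$, and then multiply these bounds over a set of primes of positive density using an independence argument. Since $A(K)$ is finitely generated (Mordell--Weil), the set $A(K)\smallsetminus A(K)_\tor$ is countable, and
\[
\bigl\{\sigma\in G_K^e \mid A(K)\cap{\textstyle\bigcap_{\ell}}\,\ell A(\Kbar(\sigma))\not\subset A(K)_\tor\bigr\}=\bigcup_{a}\ \bigcap_{\ell}\,T_\ell(a),
\]
where $a$ ranges over the non-torsion points of $A(K)$, $\ell$ over the prime numbers, and $T_\ell(a)=\{\sigma\in G_K^e\mid a\in\ell A(\Kbar(\sigma))\}$; hence it suffices to fix a non-torsion $a\in A(K)$ and show $\mu(\bigcap_\ell T_\ell(a))=0$. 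For this, write $d=\dim A$, let $V_\ell=A[\ell]$, let $D_\ell\subset A(\Kbar)$ be the nonempty set of $\ell$-division points of $a$ (a torsor under $V_\ell$), and set $M_\ell=K(V_\ell,D_\ell)$, a finite Galois extension of $K$. A choice of $b_0\in D_\ell$ identifies $D_\ell$ with $V_\ell$ and produces an affine representation $\rho_\ell\colon G_K\to\mathrm{Aff}(V_\ell)=V_\ell\rtimes\GL(V_\ell)$, $\rho_\ell(\tau)=(c_\ell(\tau),\gamma_\ell(\tau))$, with image $\Gal(M_\ell/K)$ and with $\gamma_\ell$ the mod-$\ell$ representation on $V_\ell$. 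By Galois theory, $a\in\ell A(\Kbar(\sigma))$ if and only if some point of $D_\ell$ is fixed by $\sigma_1,\dots,\sigma_e$, i.e.\ if and only if the affine transformations $\rho_\ell(\sigma_1),\dots,\rho_\ell(\sigma_e)$ of $V_\ell$ have a common fixed point; since this depends only on the image of $\sigma$ in $\Gal(M_\ell/K)^e$, the measure $\mu(T_\ell(a))$ equals the proportion of $e$-tuples in $\Gal(M_\ell/K)$ whose entries have a common affine fixed point in $V_\ell$.

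The heart of the argument, where $e\ge 2$ is used, is the estimation of $\mu(T_\ell(a))$, which I would carry out by conditioning on the linear parts $\gamma_i:=\gamma_\ell(\sigma_i)$. The fibres of $\Gal(M_\ell/K)\to\Gal(K(V_\ell)/K)$ are cosets of $W_\ell:=\Gal(M_\ell/K(V_\ell))$, viewed as a subgroup of $V_\ell$, so conditionally the translation parts $c_\ell(\sigma_i)$ are independent and uniform in cosets of $W_\ell$. If $\rho_\ell(\sigma_1)$ has a fixed point, that point lies on a coset of $\ker(\gamma_1-1)$, and then requiring $\rho_\ell(\sigma_2)$ to fix a point of that coset confines $c_\ell(\sigma_2)$ to a set of size at most $\#\ker(\gamma_1-1)$. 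Book-keeping this over $\mathbb{F}_\ell$, and using (i) that $a$ is not $\ell$-divisible in $A(K(V_\ell))$ — a Kummer-theoretic fact for the non-torsion point $a$ — so that $\#W_\ell$ is divisible by $\ell$, and (ii) that only a proportion $O(1/\ell)$ of the elements of $\Gal(K(V_\ell)/K)$ have $1$ as an eigenvalue on $V_\ell$ — which follows from the known largeness of the Galois image on the Tate module of $A$ together with $\det(1-\mathrm{Frob}_v)=\#A(\kappa(v))$, and is already implicit in \cite[Main Theorem]{JJ01} — one obtains a bound $\mu(T_\ell(a))\le C/\ell$ with $C=C(A,a)$, valid for all $\ell$ outside a finite set $S$.

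To conclude, one notes that, after possibly enlarging $S$, the family of fields $\{M_\ell\}_{\ell\notin S}$ is linearly disjoint over $K$ — again by the structure of the Galois representations attached to $A$, together with Kummer theory for $a$. Exactly as in Section~\ref{sec-TKFpart}, it follows from \cite[Lemma~21.3.7]{FriedJ} that the family $\{T_\ell(a)\}_{\ell\notin S}$ is $\mu$-independent, whence
\[
\mu\Bigl(\bigcap_{\ell}T_\ell(a)\Bigr)\le\mu\Bigl(\bigcap_{\ell\notin S}T_\ell(a)\Bigr)=\prod_{\ell\notin S}\mu(T_\ell(a))\le\prod_{\ell\notin S}\frac{C}{\ell}=0.
\]
Taking the union over the countably many non-torsion points $a\in A(K)$ finishes the proof.

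The step I expect to be the main obstacle is the measure estimate: making $\mu(T_\ell(a))$ genuinely small, of order $1/\ell$, for infinitely many primes. This is exactly where the hypothesis $e\ge 2$ is indispensable — for a single $\sigma_1$ the event ``$\rho_\ell(\sigma_1)$ has a fixed point'' has probability close to $1$, whereas a second coordinate $\sigma_2$ pins its translation part into a negligible set once $\sigma_1$ is fixed — so this is precisely the place where an $e=1$ argument would have to proceed differently. The auxiliary inputs (that $a$ remains non-$\ell$-divisible after adjoining $A[\ell]$, that a Haar-random mod-$\ell$ Galois element rarely fixes a nonzero point of $A[\ell]$, and that the fields $K(A[\ell],D_\ell)$ are linearly disjoint for $\ell$ large) all rest on the theory of Galois images and of Kummer theory for abelian varieties over finitely generated fields; these are available in the literature but must be applied with some care.
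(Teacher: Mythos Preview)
Your proposal takes a much more elaborate route than the paper's (i.e., Jarden--Petersen's) actual argument, and introduces an unjustified step. The paper first reduces to the case where $A$ is \emph{simple} (a step you omit), then invokes the Kummer-theoretic result recorded here as Proposition~\ref{prop-JP-key}: for all large $\ell$ and every $\qq\in X_\ell(\pp)$ one has $[K(\qq):K]=\ell^{2g}$. From this a crude union bound already gives
\[
\mu(S_\ell(\pp))=\mu\Bigl(\bigcup_{\qq\in X_\ell(\pp)} G_{K(\qq)}^e\Bigr)\le \sum_{\qq}\mu(G_{K(\qq)}^e)=\ell^{2g}\cdot\ell^{-2ge}=\ell^{-2g(e-1)},
\]
which tends to $0$ for $e\ge 2$; hence $\mu(N(\pp))\le\inf_\ell\mu(S_\ell(\pp))=0$. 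That is the whole proof. No independence of the fields $M_\ell$, no eigenvalue counting, no product over primes is used; the hypothesis $e\ge 2$ enters only to make the exponent $-2g(e-1)$ negative.

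By contrast, you aim for the weaker bound $\mu(T_\ell(a))\le C/\ell$ via a conditioning argument, then multiply over primes using independence. Two remarks. First, the independence step is unnecessary even on your own terms: once $\mu(T_\ell(a))\le C/\ell$, the intersection has measure at most $\inf_\ell C/\ell=0$ without any disjointness of the $M_\ell$. Second, and more seriously, your input (ii) --- that only an $O(1/\ell)$ fraction of $\Gal(K(A[\ell])/K)$ fixes a nonzero vector of $A[\ell]$ --- is not what \cite{JJ01} proves, and it is not a known consequence of ``largeness of the Galois image'' for an arbitrary abelian variety (the structure of $a_\ell(G_K)$ for non-simple $A$ or $A$ with extra endomorphisms does not obviously yield such an estimate). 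Your ``book-keeping'' paragraph thus rests on an unproved claim. The clean way around all of this is exactly the paper's: reduce to simple $A$, use the full-strength Kummer statement $\Gal(K(X_\ell(\pp))/K(A[\ell]))\cong A[\ell]$, and note that the union bound already decays like $\ell^{-2g(e-1)}$.
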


In their proof of this lemma, Jarden and Petersen reduced it to the case where $A$ is simple.
In what follows, we assume that $A$ is a simple abelian variety over $K$ and let $g$ be its dimension.
Since $A(K)$ is at most countable, it is sufficient to show that $\pp \notin \bigcap_{\ell\text{: prime}} \ell A(\Kbar(\sigma))$ for all non-torsion point $\pp \in A(K)$ and almost all $\sigma \in G_K^e$.
Namely, we have to prove that the set
\[ N(\pp) = \left\{\sigma \in G_K^e \,\middle|\, \pp \in \bigcap_{\ell\text{: prime}} \ell A(\Kbar(\sigma)) \right\} \]
has measure zero in $G_K^e$ for any non-torsion point $\pp \in A(K)$.

For a prime number $\ell$, set
\begin{align*}
    X_\ell(\pp) = \{ \qq \in A(\Kbar) \mid \ell \qq = \pp \}
\end{align*}
to be the set of $\ell$-division points of $\pp$.
The key point is the following result.

\begin{proposition}[{\cite[Corollary~4.4]{JP22}}]\label{prop-JP-key}
Let $A$ be a simple abelian variety over $K$ of dimension $g$ and $\pp \in A(K)$ a non-torsion point.
Then, for all sufficiently large prime numbers $\ell$, we have
\[\Gal(K(X_\ell(\pp)) / K(A[\ell])) \cong A[\ell]. \]
In particular, we have $[K(\qq) : K] = \ell^{2 g}$ for any $\qq \in X_\ell(\pp)$.
\end{proposition}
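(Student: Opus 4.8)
The plan is to phrase the assertion as a full-image statement for a Kummer cocycle and then to reduce it to Ribet's theorem on Kummer theory for abelian varieties. Let $\rho_\ell\colon G_K \to \GL(A[\ell])$ be the representation on the $\ell$-torsion, so that $K(A[\ell])$ is the fixed field of $\ker\rho_\ell$. First I would fix a point $\qq \in X_\ell(\pp)$ (such a point exists because $A(\Kbar)$ is a divisible group), so that $X_\ell(\pp) = \qq + A[\ell]$ and $K(X_\ell(\pp)) = K(A[\ell],\qq)$. Since $\pp \in A(K)$, we have $\ell(\sigma\qq - \qq) = \sigma\pp - \pp = 0$ for every $\sigma \in G_K$, so $c_\ell(\sigma) := \sigma\qq - \qq$ defines a map $c_\ell\colon G_K \to A[\ell]$; the identity $\sigma\tau\qq - \qq = \sigma(\tau\qq - \qq) + (\sigma\qq - \qq)$ shows $c_\ell$ is a $1$-cocycle for the $G_K$-action on $A[\ell]$ via $\rho_\ell$. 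Restricted to $G_{K(A[\ell])} = \ker\rho_\ell$, the cocycle $c_\ell$ is a homomorphism with kernel exactly $G_{K(X_\ell(\pp))}$, so it induces an injection $\Gal(K(X_\ell(\pp))/K(A[\ell])) \hookrightarrow A[\ell]$. Writing $W_\ell \subseteq A[\ell]$ for its image, the proposition is reduced to showing that $W_\ell = A[\ell]$ for all sufficiently large $\ell$; the first assertion is then immediate and the ``in particular'' is handled at the end.

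The second step is to observe that $W_\ell$ is a Galois submodule. A brief manipulation of the cocycle relation --- using $0 = c_\ell(gg^{-1}) = c_\ell(g) + \rho_\ell(g)c_\ell(g^{-1})$ --- gives $c_\ell(g\sigma g^{-1}) = \rho_\ell(g)\,c_\ell(\sigma)$ for all $g \in G_K$ and $\sigma \in G_{K(A[\ell])}$. Hence $W_\ell$ is stable under $\rho_\ell(G_K) = \Gal(K(A[\ell])/K)$, i.e.\ it is an $\mathbb{F}_\ell[\Gal(K(A[\ell])/K)]$-submodule of $A[\ell]$.

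The crux is $W_\ell = A[\ell]$ for large $\ell$, and I expect this to be the main obstacle. It is a mod-$\ell$ manifestation of the full-image statement for the Kummer representation attached to $\pp$: picking a compatible system $\qq = \qq_1, \qq_2, \dots$ with $\ell\qq_1 = \pp$, $\ell\qq_{k+1} = \qq_k$ produces a homomorphism $G_{K(A[\ell^\infty])} \to T_\ell A$, $\sigma \mapsto (\sigma\qq_k - \qq_k)_k$, whose first coordinate is the restriction of $c_\ell$; as $G_{K(A[\ell^\infty])} \subseteq G_{K(A[\ell])}$, surjectivity of this first coordinate onto $A[\ell]$ forces $W_\ell = A[\ell]$. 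To get this I would use that $A$ is $K$-simple, so $\mathrm{End}_K(A)\otimes\mathbb{Q}$ is a division algebra and the infinite-order point $\pp$ generates a free rank-one module over it --- which is exactly the non-degeneracy hypothesis of Ribet's theorem (Bashmakov's theorem when $\dim A = 1$). That theorem bounds the index of the Kummer image in $T_\ell A$ by a constant independent of $\ell$; since this index is a power of $\ell$, it must equal $1$ for all large $\ell$, and reduction modulo $\ell$ then gives $W_\ell = A[\ell]$. The subtle point is exactly this uniformity in $\ell$ rather than the largeness of the image for a single $\ell$; in the case $\mathrm{End}_K(A) = \mathbb{Z}$ one could alternatively use Serre's big-image theorem, which makes $A[\ell]$ an irreducible $\Gal(K(A[\ell])/K)$-module for large $\ell$, so $W_\ell \in \{0, A[\ell]\}$, and then rule out $W_\ell = 0$ by reducing modulo a prime of good reduction; but the case of extra endomorphisms appears to genuinely require the Ribet-type input.

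Finally, granting $W_\ell = A[\ell]$, we have $c_\ell(G_K) \supseteq c_\ell(G_{K(A[\ell])}) = A[\ell]$, so $c_\ell\colon G_K \to A[\ell]$ is surjective and the $G_K$-orbit of $\qq$ equals $\qq + c_\ell(G_K) = \qq + A[\ell] = X_\ell(\pp)$; thus $G_K$ acts transitively on $X_\ell(\pp)$. Since $\#X_\ell(\pp) = \#A[\ell] = \ell^{2g}$ and the stabilizer of $\qq$ in $G_K$ is $G_{K(\qq)}$, we conclude $[K(\qq):K] = \ell^{2g}$, uniformly over $\qq \in X_\ell(\pp)$.
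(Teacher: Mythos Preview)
The paper does not give a proof of this proposition at all: it is quoted verbatim as \cite[Corollary~4.4]{JP22} and used as a black box. So there is no in-paper argument to compare against; your sketch is essentially a reconstruction of how such a result is proved.

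Your outline is sound and follows the standard route. The identification of $\Gal(K(X_\ell(\pp))/K(A[\ell]))$ with the image $W_\ell$ of the Kummer cocycle, the verification that $W_\ell$ is a $\Gal(K(A[\ell])/K)$-submodule via $c_\ell(g\sigma g^{-1}) = \rho_\ell(g)\,c_\ell(\sigma)$, and the deduction of the ``in particular'' clause by transitivity of the $G_K$-action on $X_\ell(\pp)$ are all correct. You also correctly isolate the crux: the \emph{uniform-in-$\ell$} bound on the index of the Kummer image in $T_\ell A$, which forces that index to be $1$ once $\ell$ exceeds the bound. For a $K$-simple $A$ a non-torsion point is automatically $\mathrm{End}_K(A)$-independent (since $\mathrm{End}_K(A)\otimes\mathbb{Q}$ is a division algebra), so the hypothesis of the Ribet/Bertrand--Hindry type theorem is met. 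One small caution on references: Ribet's 1979 paper treats a single $\ell$; the uniformity you invoke is usually attributed to Bertrand and Hindry (and is what Jarden--Petersen build on in \cite{JP22}), and in the present paper $K$ is allowed to be finitely generated over $\mathbb{Q}$, so you should make sure the version you cite covers that generality---this is exactly what \cite[\S4]{JP22} arranges.
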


We have
\begin{align*}
    N(\pp) &= \bigcap_{\ell\text{: prime}} \{\sigma \in G_K^e \mid \pp \in \ell A(\Kbar(\sigma))\} \\
    &= \bigcap_{\ell\text{: prime}} \{\sigma \in G_K^e \mid X_\ell(\pp) \cap A(\Kbar(\sigma)) \neq \varnothing\}.
\end{align*}
For a prime number $\ell$, set
\[ S_\ell(\pp) = \{ \sigma \in G_K^e \mid X_\ell(\pp) \cap A(\Kbar(\sigma)) \neq \varnothing\} \]
to be the set of $\sigma \in G_K^e$ such that $A(\Kbar(\sigma))$ contains at least one $\ell$-division point of $\pp$.
Then we see
\[ S_\ell(\pp) = \bigcup_{\qq \in X_\ell(\pp)} \{\sigma \in G_K^e \mid \qq \in A(\Kbar(\sigma))\} = \bigcup_{\qq \in X_\ell(\pp)} G_{K(\qq)}^e. \]
By Proposition~\ref{prop-JP-key} and the fact that $\# X_\ell(\pp) = \# A[\ell] = \ell^{2 g}$, we estimate
\[ \mu(S_\ell(\pp)) \le \sum_{\qq \in X_\ell(\pp)} \mu(G_{K(\qq)}^e) = \sum_{\qq \in X_\ell(\pp)} \frac{1}{[K(\qq) : K]^e} = \frac{\ell^{2 g}}{(\ell^{2 g})^e} = \frac{1}{\ell^{2 g (e - 1)}} \]
for all sufficiently large $\ell$.
If we assume that $e \ge 2$, then this implies that $\mu(S_\ell(\pp))$ tends to zero as $\ell$ goes to infinity.
Hence we conclude that $\mu(N(\pp)) = 0$, which yields Lemma~\ref{lem-JP-key}.

If $e = 1$, then the rightmost hand in the above estimation is always one, which prevents us from extracting any information on the measures of $S_\ell(\pp)$ and of $N_\ell(\pp)$.
This failure is due to the rough estimation in the inequality
\[ \mu(S_\ell(\pp)) \left(= \mu\left(\bigcup_{\qq \in X_\ell(\pp)} G_{K(\qq)}^e\right)\right) \le \sum_{\qq \in X_\ell(\pp)} \mu(G_{K(\qq)}^e). \]
Although the sum of the measures of $G_{K(\qq)}$ for $\qq \in X_\ell(\pp)$ equals one, their intersection obviously has positive measure and the measure of $S_\ell(\pp)$ is strictly less than one.
A more precise estimate can be obtained, and when $K$ is a number field, a result concerning the independence of the family $\{S_\ell(\pp)\}_\ell$ can be established, thereby allowing us to extend Lemma~\ref{lem-JP-key} to the case $e = 1$.

\begin{lemma}\label{AVKF-Goal}
Let $K$ be a number field and $A$ a simple abelian variety over $K$.
For any non-torsion point $\mathbf{p}$ in $A(K)$, the set
\[ N(\pp) = \left\{\sigma \in G_K \,\middle|\, \pp \in \bigcap_{\ell\text{\textup{: prime}}} \ell A(\Kbar(\sigma)) \right\} \]
has measure zero in $G_K$.
Therefore, if $K$ is a number field, then Lemma~\textup{\ref{lem-JP-key}} is true even for $e = 1$, i.e., $A(K) \cap \left(\bigcap_{\ell\text{\textup{: prime}}} \ell A(\Kbar(\sigma))\right) \subset A(K)_\tor$ for almost all $\sigma \in G_K$ and any abelian variety $A$ over $K$.
\end{lemma}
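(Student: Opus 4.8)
The plan is to prove the first assertion, that $\mu(N(\pp)) = 0$ for every simple abelian variety $A$ over $K$ of dimension $g$ and every non-torsion point $\pp \in A(K)$; the second assertion will then follow from the reductions of Jarden and Petersen recalled above, none of which uses $e \ge 2$. Recall that $N(\pp) = \bigcap_{\ell} S_\ell(\pp)$ with $S_\ell(\pp) = \bigcup_{\qq \in X_\ell(\pp)} G_{K(\qq)}$. The crucial point will be a sharp estimate for each $\mu(S_\ell(\pp))$, sharper than the union bound used for $e \ge 2$. To obtain it I would use that $X_\ell(\pp)$ is a torsor under $A[\ell]$: fixing $\qq_0 \in X_\ell(\pp)$ gives a $1$-cocycle $c \colon G_K \to A[\ell]$, $\sigma \mapsto \sigma(\qq_0) - \qq_0$, and $\sigma$ fixes some point of $X_\ell(\pp)$ exactly when $c(\sigma) \in \mathrm{im}(1 - \bar\sigma)$, where $\bar\sigma \in G_\ell := \Gal(K(A[\ell])/K)$ is the restriction of $\sigma$. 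For all large $\ell$, Proposition~\ref{prop-JP-key} says that $c$ restricts to a \emph{surjection} $G_{K(A[\ell])} \twoheadrightarrow A[\ell]$, so conditionally on $\bar\sigma$ the value $c(\sigma)$ is equidistributed in $A[\ell]$; this yields
\[ \mu(S_\ell(\pp)) = \frac{1}{\# G_\ell} \sum_{\bar\sigma \in G_\ell} \ell^{-\dim_{\mathbb{F}_\ell} A[\ell]^{\bar\sigma}}, \]
where $A[\ell]^{\bar\sigma}$ is the fixed subspace. (The same identity can be reached by inclusion--exclusion over the fields $K(\qq)$, $\qq \in X_\ell(\pp)$, in the style of Zywina~\cite[pp.~495--496]{Zywina16}, using that $K(\qq_0)$ and $K(A[\ell])$ are linearly disjoint over $K$.) Since $\dim A[\ell]^{\bar\sigma} \ge 1$ precisely when $1$ is an eigenvalue of $\bar\sigma$ on $A[\ell]$, this gives
\[ 1 - \mu(S_\ell(\pp)) \ge \Bigl( 1 - \tfrac{1}{\ell} \Bigr) \cdot \frac{\#\{ \bar\sigma \in G_\ell : 1 \text{ is an eigenvalue of } \bar\sigma \}}{\# G_\ell}. \]

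Next I would make this saving uniform in $\ell$. Here I would invoke the theory of mod-$\ell$ Galois representations of abelian varieties over number fields (Serre, Bogomolov, \ldots): there is a set $\Lambda$ of primes of positive Dirichlet density such that, for $\ell \in \Lambda$, the group $G_\ell$ contains the homotheties $\mathbb{F}_\ell^\times \cdot \mathrm{id}$ and a proportion of $G_\ell$ bounded below by some $c_0 > 0$ independent of $\ell$ consists of elements having an eigenvalue in $\mathbb{F}_\ell$; by Chebotarev this proportion equals the density of primes $\mathfrak{p}$ of $K$ for which $\mathrm{Frob}_{\mathfrak{p}}$ has an $\mathbb{F}_\ell$-rational eigenvalue on $A[\ell]$, for instance the primes $\mathfrak{p}$ with $\ell \mid \# A(\mathbb{F}_{\mathfrak{p}})$. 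If $\bar\sigma$ has an eigenvalue $\mu_0 \in \mathbb{F}_\ell^\times$, then $\mu_0^{-1}\bar\sigma \in G_\ell$ has $1$ as an eigenvalue, and whether an element has an $\mathbb{F}_\ell$-rational eigenvalue is constant along cosets of $\mathbb{F}_\ell^\times \cdot \mathrm{id}$; feeding this into the last inequality gives $\mu(S_\ell(\pp)) \le 1 - c/\ell$ for all $\ell \in \Lambda$, with $c > 0$ depending only on $A$ and $K$.

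To finish, I would exploit independence. As $S_\ell(\pp)$ is a union of cosets of $G_{K(X_\ell(\pp))}$ in $G_K$ and $K(X_\ell(\pp))/K$ is finite Galois, the family $\{S_\ell(\pp)\}_{\ell \in \Lambda'}$ is $\mu$-independent as soon as $\{K(X_\ell(\pp))\}_{\ell \in \Lambda'}$ is linearly disjoint over $K$~\cite[Lemma~21.3.7]{FriedJ}; the latter holds after removing finitely many primes from $\Lambda$, by the Kummer theory of $A$ --- the analogue of Theorem~\ref{PST} for the division points of $\pp$, built on Serre's theorem that the fields $K(A[\ell])$ are linearly disjoint over $K$ for all but finitely many $\ell$. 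Consequently
\[ \mu(N(\pp)) \le \mu\Bigl( \bigcap_{\ell \in \Lambda'} S_\ell(\pp) \Bigr) = \prod_{\ell \in \Lambda'} \mu(S_\ell(\pp)) \le \prod_{\ell \in \Lambda'} \Bigl( 1 - \tfrac{c}{\ell} \Bigr) = 0, \]
the product vanishing because $\sum_{\ell \in \Lambda'} 1/\ell = \infty$. This proves $\mu(N(\pp)) = 0$, and the lemma follows.

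The main obstacle is the uniform estimate $\mu(S_\ell(\pp)) \le 1 - c/\ell$. The crude union bound that disposes of the case $e \ge 2$ gives only $\mu(S_\ell(\pp)) \le 1$ here, and recovering a saving whose reciprocals sum to $\infty$ over $\ell$ requires passing beyond the bare maximality of the Kummer extension (Proposition~\ref{prop-JP-key}) to quantitative information on the mod-$\ell$ Galois image that is uniform in $\ell$; the abelian-variety Kummer-theory input needed for the independence step is a more routine, secondary matter.
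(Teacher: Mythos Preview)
Your overall strategy matches the paper's: compute $\mu(S_\ell(\pp))$ exactly (your formula $\mu(S_\ell(\pp)) = \frac{1}{\#G_\ell}\sum_{\bar\sigma}\ell^{-\dim A[\ell]^{\bar\sigma}}$ is equivalent to the paper's Proposition~\ref{prop-ComputationOfTheMeasure} with $L=K$), bound $1-\mu(S_\ell(\pp))$ from below by $c/\ell$ via Zywina's eigenvalue count, and conclude by independence and the divergence of $\sum 1/\ell$ over a positive-density set. The measure computation and the estimate are correct. The gap is precisely in the step you dismiss as ``routine, secondary''.

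Your claim that the fields $K(A[\ell])$---and hence $K(X_\ell(\pp))$---are linearly disjoint over $K$ after discarding finitely many $\ell$ is false in general. Serre's independence theorem~\cite{Serre13} only yields surjectivity of $G_L\to\prod_\ell\rho_\ell(G_L)$ for a suitable \emph{finite extension} $L$ of $K$, not for $K$ itself. For a concrete obstruction, take $A=E$ a CM elliptic curve over $\mathbb{Q}$ with CM field $F$: then $F\subset\mathbb{Q}(E[\ell])$ for every prime $\ell$ unramified in $F$, so no two of these fields are linearly disjoint over $\mathbb{Q}$, and the events $S_\ell(\pp)$ are not $\mu$-independent in $G_{\mathbb{Q}}$.

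The paper's remedy is exactly to pass to $L$ (Proposition~\ref{prop-independence}) and then argue on each coset $\beta G_L$ separately. This is why Propositions~\ref{prop-ComputationOfTheMeasure} and~\ref{prop-appliedzywina} are stated for $S_\ell(\pp)\cap\beta G_L$, uniformly in $\beta$: one needs the estimate
\[
\mu(S_\ell(\pp)\cap\beta G_L)\le\frac{1}{[L:K]}-\frac{c}{\ell}
\]
on every coset, then uses that $\{S_\ell(\pp)\cap\beta G_L\}_{\ell\in\mathcal{P}}$ \emph{is} independent inside $\beta G_L$ (this follows from the surjectivity over $L$), and finally sums over the finitely many cosets to get $\mu(N(\pp))=0$. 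Your formula and your eigenvalue bound both extend to cosets without difficulty---indeed Zywina's Proposition~1.2 is already stated coset-by-coset---so once you insert the passage to $L$ your argument becomes the paper's. But as written, the product identity $\mu(\bigcap_\ell S_\ell(\pp))=\prod_\ell\mu(S_\ell(\pp))$ is unjustified and in general false.
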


The rest of this section is devoted to the proof of Lemma~\ref{AVKF-Goal}.
We first examine the independence of $\{S_\ell(\pp)\}_\ell$ and the measure of each $S_\ell(\pp)$ in separate subsections, and then conclude in the final subsection.

\subsection{Independence}\label{sec-independence}

In this subsection, let $K$ be a number field, $A$ an abelian variety over $K$ of dimension $g \ge 1$, and $\pp \in A(K)$ a non-torsion point.

Let
\[ a_\ell: G_K \to \mathrm{Aut}(A[\ell]) \cong \mathrm{GL}_{2 g}(\mathbb{F}_\ell) \]
be the Galois representation on $A[\ell]$.
Choose $\qq \in X_\ell(\pp)$ and let
\[ b_\ell(\sigma) = \sigma(\qq) - \qq \in A[\ell] \]
for $\sigma \in G_K$.
Then
\[ b_\ell: G_K \to A[\ell] \cong \mathbb{F}_\ell^{2 g} \]
is a $1$-cocycle.
Hence $a_\ell$ and $b_\ell$ induce the representation
\[ \rho_\ell: G_K \to \mathrm{GL}_{2 g + 1}(\mathbb{F}_\ell) \]
defined as
\[ \rho_\ell(\sigma) = \begin{pmatrix}
    a_\ell(\sigma) & b_\ell(\sigma) \\
    0 & 1
\end{pmatrix}.
\]

\begin{proposition}\label{prop-independence}
Let $K$ be a number field and $A$ an abelian variety over $K$.
Then there exists a finite extension $L$ of $K$ such that the homomorphism
\[ G_L \to \prod_{\ell} \rho_\ell(G_L) \]
induced by $\{\rho_\ell\}_\ell$ is surjective.
\end{proposition}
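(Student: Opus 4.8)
The plan is to realize this as an instance of the standard "independence of mod-$\ell$ representations" phenomenon for abelian varieties, enhanced to include the cocycle part $b_\ell$. The key inputs are Serre's and Wintenberger's results on the images of the mod-$\ell$ Galois representations attached to $A$ (or, more elementarily, Serre's theorem that the product map $G_K \to \prod_\ell a_\ell(G_K)$ has open image after a finite base change), together with the fact that $A[\ell]$ is an \emph{absolutely simple} $\mathbb{F}_\ell[G_K]$-module for all large $\ell$ when $\mathrm{End}(A)=\mathbb{Z}$, which controls the $H^1$ in which $b_\ell$ lives.

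First I would replace $K$ by a finite extension so that, for every $\ell$, the image $a_\ell(G_K)$ contains the relevant "big" subgroup — concretely, after this base change one may assume $a_\ell(G_K)\supseteq \mathrm{Sp}_{2g}(\mathbb{F}_\ell)$ (or at least a subgroup acting absolutely irreducibly with no trivial subquotient) for all $\ell$ outside a finite set, and handle the finitely many exceptional $\ell$ by a further finite base change that does not affect the generic behavior. Next, by a Goursat-type argument it suffices to show (a) for each $\ell$ the group $\rho_\ell(G_L)$ has no nontrivial abelian quotient of order prime to $\ell$ and no quotient that is a quotient of $\rho_{\ell'}(G_L)$ for $\ell'\neq\ell$ — this follows because the composition factors of $\rho_\ell(G_L)$ are (a quotient of) $a_\ell(G_L)$, which for large $\ell$ has $\mathrm{PSp}_{2g}(\mathbb{F}_\ell)$ as its only nonabelian composition factor, plus possibly the elementary abelian $\ell$-group $A[\ell]$ coming from $b_\ell$; distinct $\ell$ give coprime or nonisomorphic simple factors — and (b) the product $G_L \to \prod_\ell a_\ell(G_L)$ is surjective (Serre), which reduces the problem to the cocycle coordinates. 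For (b)$\to$ full surjectivity one uses that the kernel of $\rho_\ell$ restricted to $G_{L(A[\ell])}$ is exactly the module $A[\ell]$ via $b_\ell$, and that $A[\ell]$ as a $G_L$-module is absolutely irreducible with trivial coinvariants for large $\ell$, so $H^1(G_{L(A[\ell])}, A[\ell])^{G_L}$ contributes nothing that could be shared across different $\ell$; a Chebotarev/Frobenius-lifting argument then shows that the image in $\prod_\ell \rho_\ell(G_L)$ surjects onto the full product once we know it surjects onto each factor and onto each pairwise product.

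The main obstacle I anticipate is controlling the \emph{cocycle} parts $b_\ell$ uniformly in $\ell$: one must rule out the possibility that the classes of $b_\ell$ in $H^1(G_L, A[\ell])$ are "linked" across different primes, or that some $b_\ell$ is trivial (which would happen precisely when $\pp$ itself is divisible by $\ell$ in $A(L)$ — but $\pp$ being non-torsion and Proposition~\ref{prop-JP-key} guaranteeing $[K(\qq):K]=\ell^{2g}$ for large $\ell$ shows $b_\ell$ is surjective onto $A[\ell]$ for almost all $\ell$, which is exactly what is needed). The genuinely delicate point is the finite set of small primes $\ell$ where $a_\ell(G_K)$ may be small or $A[\ell]$ reducible; these must be absorbed into the choice of $L$ by a separate, more hands-on argument (e.g. enlarging $L$ so that $\rho_\ell(G_L)$ becomes the full "expected" group for each bad $\ell$ individually, which is possible since there are only finitely many of them), after which the generic large-$\ell$ argument via Serre's theorem and absolute irreducibility closes the proof.
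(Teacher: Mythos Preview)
Your Goursat-type strategy relies on structural hypotheses about the images $a_\ell(G_K)$ that are simply not available for an arbitrary abelian variety: you assume that after a finite base change $a_\ell(G_K)\supseteq\mathrm{Sp}_{2g}(\mathbb{F}_\ell)$ for almost all $\ell$, and that $A[\ell]$ is an absolutely irreducible $G_K$-module. Both of these would follow from strong forms of the Mumford--Tate conjecture, but neither is known in general, and the proposition is stated for an arbitrary abelian variety (not even simple; and even for simple $A$ one may have $\mathrm{End}(A)\neq\mathbb{Z}$, so $A[\ell]$ need not be absolutely irreducible). Your composition-factor analysis for part (a) therefore does not go through, and the ``hands-on'' absorption of bad primes into $L$ does not repair this, because the problem is with the infinitely many large primes.

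The paper's proof avoids all of this by appealing directly to the ramification criterion underlying Serre's independence theorem~\cite[Th\'eor\`eme~1]{Serre13}. That criterion says a family of mod-$\ell$ (or $\ell$-adic) representations becomes independent over a finite extension provided each $\rho_\ell$ is unramified outside a fixed finite set of places together with the places above $\ell$. The independence for $\{a_\ell\}$ is already an instance of this. To extend it to $\{\rho_\ell\}$ one only has to check that the extra cocycle coordinate $b_\ell$ does not introduce new ramification: at a place $v$ of good reduction with residue characteristic $p_v\neq\ell$, Mattuck's theorem gives $A(L_v)\cong\mathbb{Z}_{p_v}^{\,\mathrm{rank}}\oplus(\text{finite})$, so $\pp$ is $\ell$-divisible in $A(L_v)$ up to torsion; hence every $\qq\in X_\ell(\pp)$ lies in $L_v(A[\ell^m])$ for some $m$, which is unramified at $v$ by N\'eron--Ogg--Shafarevich. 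That is the entire argument---no group-theoretic analysis of images is needed, and no simplicity or endomorphism hypothesis on $A$ is used.
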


\begin{proof}
We use the theorem~\cite[Th\'{e}or\`{e}me~1]{Serre13} of Serre on the independence of a family of $\ell$-adic Galois representations.
Notice that a result similar to what we are now proving holds for the family $\{a_\ell\}_\ell$~\cite[3.1]{Serre13}.
Then there exists a finite extension $L$ of $K$ such that
\[ G_L \to \prod_{\ell} a_\ell(G_L) \]
induced by $\{a_\ell\}_\ell$ is surjective.
We also know that $A$ has good reduction at all but finitely many finite places of $L$.
Then what we need to prove is reduced to showing that, for any finite place $v$ where $A$ has good reduction and any prime number $\ell$ different from the characteristic $p_v$ of the residue field of $v$, the representation $\rho_\ell$ is unramified at $v$.
Since $A(L_v)$ is isomorphic to the direct sum of a finitely generated free $\mathbb{Z}_{p_v}$-module and finite torsion~\cite[Theorem~7]{Mattuck} and since $\ell \neq p_v$, the multiplication-by-$\ell$ map on $A(L_v) / A(L_v)_\tor$ is bijective.
Hence we can take $\rr \in A(L_v)$ such that $\ell \rr - \pp$ is a torsion point.
Then any $\ell$-division point of $\ell \rr - \pp$, which is of the form $\rr - \qq$ for $\qq \in X_\ell(\pp)$, is defined over $L_v(A[\ell^m])$ for some $m \ge 1$.
Since $L_v(A[\ell^m])$ is unramified at $v$ by the criterion of N\'{e}ron--Ogg--Shafarevich, so is $L_v(X_\ell(\pp))$.
This means that $\rho_\ell$ is unramified at $v$.
\end{proof}

\subsection{Computing of the measure of $S_\ell(\pp)$}

We keep the same notation as in the previous subsection; $K$ is a number field, $A$ is an abelian variety over $K$ of dimension $g \ge 1$, and $\pp$ is a non-torsion point in $A(K)$.
We additionally assume that $A$ is simple.
In this subsection, we estimate the measure of the set $S_{\ell}(\pp)$.
To allow us to apply Proposition~\ref{prop-independence}, we need to show that --- $S_{\ell}(\pp)$ uniformly distributes --- the measures of $S_{\ell}(\pp) \cap \beta H$, where $H$ is an open subgroup of $G_K$ and $\beta \in G_K$, have a uniform estimate independent of the choice of $\beta \in G_K$.
Notice that all results in this subsection work for a general finitely generated field $K$ of $\mathbb{Q}$. 

For any prime number $\ell$, the set $A[\ell]$ of $\ell$-torsion points in $A(\Kbar)$ forms a $2 g$-dimensional $\mathbb{F}_\ell$-vector space and the subset $W(\sigma) = A[\ell] \cap A(\Kbar(\sigma))$ forms an $\mathbb{F}_\ell$-vector subspace of $A[\ell]$ for any $\sigma \in G_K$.
Let $L$ be a finite extension of $K$ and $\beta \in G_K$.
For $0 \le d \le 2 g$, set
\[ Y_d = \{\sigma \in \beta G_L \mid \dim_{\mathbb{F}_\ell} W(\sigma) \ge d\}. \]
Note that $\mu(Y_0) = \mu(\beta G_L) = 1 / [L : K]$.
The goal of this subsection is to prove the following result.

\begin{proposition}\label{prop-ComputationOfTheMeasure}
Let $L$ be a finite extension of $K$ and $\beta \in G_K$.
For all sufficiently large prime numbers $\ell$, we have
\[ \mu(S_\ell(\pp) \cap \beta G_L) = \frac{1}{[L : K]} - \sum_{d = 1}^{2 g} \frac{\ell - 1}{\ell^d} \mu(Y_d). \]
\end{proposition}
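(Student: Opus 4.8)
The idea is to decompose $S_\ell(\pp) \cap \beta G_L$ according to the $\mathbb{F}_\ell$-dimension of the space $W(\sigma) = A[\ell] \cap A(\Kbar(\sigma))$, using an inclusion–exclusion over the $\ell$-division points of $\pp$. First I would recall that if $X_\ell(\pp) \cap A(\Kbar(\sigma)) \neq \varnothing$ — say $\qq_0 \in X_\ell(\pp)$ is fixed by $\sigma$ — then the full set of $\ell$-division points of $\pp$ lying in $A(\Kbar(\sigma))$ is exactly $\qq_0 + W(\sigma)$ (indeed $X_\ell(\pp) = \qq_0 + A[\ell]$, and $\qq_0 + w$ is $\sigma$-fixed iff $w \in W(\sigma)$). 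Hence for $\sigma \in S_\ell(\pp)$ the number of $\ell$-division points of $\pp$ over $\Kbar(\sigma)$ is precisely $\ell^{\dim W(\sigma)}$, while for $\sigma \notin S_\ell(\pp)$ it is $0$.

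Next I would count, over $\beta G_L$, the quantity $\sum_{\qq \in X_\ell(\pp)} \mu(G_{K(\qq)} \cap \beta G_L)$ in two ways. On the one hand, for each $\qq \in X_\ell(\pp)$, Proposition~\ref{prop-JP-key} gives $[K(\qq):K] = \ell^{2g}$ for all sufficiently large $\ell$; combined with the fact that $G_{K(\qq)} \cap \beta G_L$ is either empty or a coset of $G_{K(\qq)L}$ whose index behaviour is controlled by $K(\qq)$ and $L$, one wants $\mu(G_{K(\qq)} \cap \beta G_L) = 1/([L:K]\,\ell^{2g})$ once $\ell$ is large enough that $K(\qq)$ and $L$ are linearly disjoint over $K$ — and for $\ell$ large this holds because $[K(\qq):K] = \ell^{2g}$ is coprime to $[L:K]$. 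So the left-hand sum equals $\ell^{2g} \cdot \frac{1}{[L:K]\ell^{2g}} = \frac{1}{[L:K]}$. On the other hand, writing $\mathbf{1}_{\qq \in A(\Kbar(\sigma))}$ and integrating, the same sum equals $\int_{\beta G_L} \#\{\qq \in X_\ell(\pp) : \qq \in A(\Kbar(\sigma))\}\, d\mu = \int_{\beta G_L} \ell^{\dim W(\sigma)} \mathbf{1}_{S_\ell(\pp)}\, d\mu$. Breaking the integrand up by the level sets $\dim W(\sigma) = d$ and using $\ell^d = 1 + \sum_{j=1}^{d}(\ell^j - \ell^{j-1})$ — i.e. Abel summation — converts this into $\mu(S_\ell(\pp)\cap\beta G_L) + \sum_{d=1}^{2g}(\ell^d - \ell^{d-1})\,\mu(\{\sigma \in S_\ell(\pp)\cap\beta G_L : \dim W(\sigma) \ge d\})$.

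Finally I would identify $\{\sigma \in S_\ell(\pp)\cap\beta G_L : \dim W(\sigma) \ge d\}$ with $Y_d$ up to measure zero: the point is that, for $d \ge 1$, if $\dim W(\sigma) \ge 1$ then generically $\sigma$ also fixes some $\ell$-division point of $\pp$ — more precisely, one shows $Y_d \subset S_\ell(\pp)$ up to a null set, or better, that $\mu(Y_d \setminus S_\ell(\pp)) = 0$, again for $\ell$ large. This is where I expect the main obstacle to lie: one must argue that the event "$A(\Kbar(\sigma))$ contains a nonzero $\ell$-torsion point but no $\ell$-division point of $\pp$" is negligible, which should follow from a version of Proposition~\ref{prop-JP-key} controlling $\Gal(K(X_\ell(\pp))/K(A[\ell]))$, ensuring that the cocycle $b_\ell$ restricted to the stabilizer of any nonzero line in $W$ is still surjective onto a line, hence the probability of it vanishing is $1/\ell \to$ small, but actually one needs it to vanish exactly on a null set after intersecting over the tower — here I would instead phrase it as: $Y_d \cap S_\ell(\pp)^c$ is a union of cosets of $G_{K(A[\ell], \ldots)}$ of controllably small total measure, but since we are computing an \emph{exact} equality we really want $Y_d \subseteq S_\ell(\pp)$ exactly, which should hold because any $\sigma$ fixing a $d$-dimensional $W$ with $d\ge 1$, when $\ell$ is large, extends within its coset to one also fixing a division point; I would make this precise using the surjectivity in Proposition~\ref{prop-JP-key}. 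Granting $\mu(Y_d \triangle (S_\ell(\pp) \cap Y_d)) = 0$, substitute $\mu(\{\dim W \ge d\} \cap S_\ell(\pp) \cap \beta G_L) = \mu(Y_d)$, rearrange $\ell^d - \ell^{d-1} = \ell^{d-1}(\ell - 1)$ and divide through, obtaining exactly
\[
\mu(S_\ell(\pp)\cap\beta G_L) = \frac{1}{[L:K]} - \sum_{d=1}^{2g} \frac{\ell-1}{\ell^{d}}\,\mu(Y_d)
\]
after re-indexing $d \mapsto 2g+1-d$ or absorbing the power of $\ell$ as written; the stated form with $\frac{\ell-1}{\ell^d}$ comes out once one is careful that $Y_d$ is defined with $\dim W(\sigma) \ge d$ and the telescoping is organized from the top. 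I would double-check the indexing by testing $g$ small and by verifying consistency with the known bound $\mu(S_\ell(\pp)) \le \ell^{2g}/[L:K]\cdot(\ldots)$ from the $e \ge 2$ discussion.
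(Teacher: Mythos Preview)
Your double-counting and Abel summation are fine, but the step where you ``identify $\{\sigma\in S_\ell(\pp)\cap\beta G_L:\dim W(\sigma)\ge d\}$ with $Y_d$ up to measure zero'' is not merely delicate---it is false. Write $a_\ell(\sigma)$ for the action of $\sigma$ on $A[\ell]$ and $b_\ell(\sigma)=\sigma(\qq_0)-\qq_0$ for a fixed $\qq_0\in X_\ell(\pp)$; then $\sigma\in S_\ell(\pp)$ iff $b_\ell(\sigma)\in\mathrm{Im}(1-a_\ell(\sigma))$, a subspace of dimension $2g-\dim W(\sigma)$. By Proposition~\ref{prop-JP-key} the cocycle $b_\ell$ is surjective onto $A[\ell]$ over $K(A[\ell])$, so conditioned on $\dim W(\sigma)=d\ge 1$ the event $\sigma\in S_\ell(\pp)$ has probability $\ell^{-d}$, not $1$; the set $Y_d\smallsetminus S_\ell(\pp)$ has positive measure. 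The arithmetic already signals this: substituting $\mu(S_\ell(\pp)\cap Y_d)=\mu(Y_d)$ into your Abel-summed identity produces a coefficient $\ell^{d-1}(\ell-1)$ in front of $\mu(Y_d)$, not $(\ell-1)/\ell^d$, and no re-indexing repairs a discrepancy of $\ell^{2d-1}$.

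What the paper does is different. It partitions $S_\ell(\pp)\cap\beta G_L$ by pairs $(W,\qq\bmod W)$ with $W=W(\sigma)$, obtaining
\[
\mu(S_\ell(\pp)\cap\beta G_L)=\sum_{\qq\in X_\ell(\pp)}\sum_{d=0}^{2g}\ell^{-d}\,T_{\qq,d},\qquad T_{\qq,d}=\mu\bigl(\{\sigma\in\beta G_L:\qq\in A(\Kbar(\sigma)),\ \dim W(\sigma)=d\}\bigr).
\]
The key input is an \emph{independence} lemma: for $\ell$ large, $K(\qq)$ and $L(A[\ell])$ are linearly disjoint over $K$, whence $\mu(G_{K(\qq)}\cap Y_d)=\ell^{-2g}\mu(Y_d)$ for \emph{every} $d$, not just $d=0$. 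This gives $\sum_{\qq}T_{\qq,d}=\mu(Y_d)-\mu(Y_{d+1})$, and a straightforward telescope yields the stated formula. Your double-counting identity uses only the $d=0$ case of this independence; once one knows all cases, the double-counting reduces to the tautology $\sum_d\bigl(\mu(Y_d)-\mu(Y_{d+1})\bigr)=1/[L:K]$ and contributes nothing toward the formula.
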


To prove this proposition, we fix a finite extension $L$ of $K$ and $\beta \in G_K$.
For a prime number $\ell$ and $0 \le d \le 2 g$, set $\mathcal{W}_d$ to be the set of all $\mathbb{F}_\ell$-vector subspaces of dimension $d$ in $A[\ell]$ and
\[ \mathcal{W} = \bigcup_{d = 0}^{2 g} \mathcal{W}_d \]
to be the set of all $\mathbb{F}_\ell$-vector subspaces in $A[\ell]$.
Then we can write
\[ Y_d = \bigcup_{W \in \mathcal{W}_d} (G_{K(W)} \cap \beta G_L). \]
The following lemma is easy to check.

\begin{lemma}
Suppose $\sigma \in S_\ell(\pp)$ and choose $\qq_0 \in X_\ell(\pp) \cap A(\Kbar(\sigma))$.
Then
\[ W(\sigma) = \{\qq - \qq_0 \mid \qq \in X_\ell(\pp) \cap A(\Kbar(\sigma))\}. \]
In particular, the right hand side is independent of the choice of $\qq_0$.
\end{lemma}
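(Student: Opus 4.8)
The plan is to recognize that $X_\ell(\pp)$ is a coset of the $\ell$-torsion subgroup: for any fixed $\qq_0 \in A(\Kbar)$ with $\ell\qq_0 = \pp$ one has $X_\ell(\pp) = \qq_0 + A[\ell]$. The lemma then amounts to describing how this coset meets the fixed field $A(\Kbar(\sigma))$; the claimed formula says precisely that $X_\ell(\pp) \cap A(\Kbar(\sigma))$ is a torsor under $W(\sigma) = A[\ell] \cap A(\Kbar(\sigma))$, nonempty because $\sigma \in S_\ell(\pp)$.

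First I would check that the right-hand set is contained in $W(\sigma)$. If $\qq \in X_\ell(\pp) \cap A(\Kbar(\sigma))$, then $\ell(\qq - \qq_0) = \pp - \pp = 0$, so $\qq - \qq_0 \in A[\ell]$; and since $\qq, \qq_0 \in A(\Kbar(\sigma))$, their difference lies in $A(\Kbar(\sigma))$ as well, whence $\qq - \qq_0 \in A[\ell] \cap A(\Kbar(\sigma)) = W(\sigma)$. For the reverse inclusion, given $w \in W(\sigma)$ I set $\qq = \qq_0 + w$; then $\ell\qq = \ell\qq_0 + \ell w = \pp$ and $\qq \in A(\Kbar(\sigma))$, so $\qq \in X_\ell(\pp) \cap A(\Kbar(\sigma))$ and $w = \qq - \qq_0$ lies in the right-hand set. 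Combining the two inclusions gives the stated equality, and since $W(\sigma)$ makes no reference to $\qq_0$, independence of the choice of $\qq_0$ is immediate.

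There is no genuine obstacle here. The only points worth noting are that the displayed set on the right is empty unless some $\ell$-division point of $\pp$ is already defined over $\Kbar(\sigma)$ --- which is exactly what the hypothesis $\sigma \in S_\ell(\pp)$ provides, guaranteeing a base point $\qq_0$ --- and that the whole argument is just the observation that $A[\ell]$ acts simply transitively on $X_\ell(\pp)$ by translation, an action compatible with the Galois action and hence restricting to an action of $W(\sigma)$ on $X_\ell(\pp) \cap A(\Kbar(\sigma))$.
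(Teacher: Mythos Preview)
Your proof is correct; the paper omits the argument entirely, merely stating that the lemma ``is easy to check,'' and your two-inclusion verification is exactly the intended routine check.
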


For $W \in \mathcal{W}$, define a binary relation $\sim_W$ on $X_\ell(\pp)$ as follows:
\[ \qq \sim_W \qq' \Longleftrightarrow \qq - \qq' \in W. \]
It is easily seen that $\sim_W$ is an equivalent relation.
For $\qq \in X_\ell(\pp)$ and $W \in \mathcal{W}$, set
\[ J(\qq, W) = \{\sigma \in \beta G_L \mid \qq \in A(\Kbar(\sigma)) \text{ and } W(\sigma) = W\}. \]
It immediately follows that $\qq \sim_W \qq'$ implies $J(\qq, W) = J(\qq', W)$.

\begin{lemma}\label{PartitionByWq}
The family $\{J(\qq, W)\}_{W \in \mathcal{W}, \qq \in X_\ell(\pp) / {\sim_W}}$ gives a partition of the set $S_\ell(\pp) \cap \beta G_L$.
Here we admit the case where $J(\qq, W)$ is empty for some $\qq$ and $W$.
\end{lemma}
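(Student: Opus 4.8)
The plan is to check the two defining properties of a partition: that the sets $J(\qq, W)$ are pairwise disjoint, and that their union is exactly $S_\ell(\pp) \cap \beta G_L$. First I would establish that the family covers $S_\ell(\pp) \cap \beta G_L$. Given $\sigma \in S_\ell(\pp) \cap \beta G_L$, by definition $X_\ell(\pp) \cap A(\Kbar(\sigma)) \neq \varnothing$, so we may pick some $\qq_0$ in this set; set $W = W(\sigma) = A[\ell] \cap A(\Kbar(\sigma))$, which is a well-defined element of $\mathcal{W}$ independent of the choice of $\qq_0$ by the preceding lemma. Then $\sigma \in J(\qq_0, W)$ by construction, and $[\qq_0]$ is a class in $X_\ell(\pp) / {\sim_W}$. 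Conversely, every nonempty $J(\qq, W)$ is contained in $S_\ell(\pp) \cap \beta G_L$: if $\sigma \in J(\qq, W)$ then $\qq \in X_\ell(\pp) \cap A(\Kbar(\sigma))$, so $\sigma \in S_\ell(\pp)$, and $\sigma \in \beta G_L$ by the definition of $J(\qq, W)$.

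Next I would verify disjointness. Suppose $\sigma \in J(\qq, W) \cap J(\qq', W')$ for classes $[\qq] \in X_\ell(\pp)/{\sim_W}$ and $[\qq'] \in X_\ell(\pp)/{\sim_{W'}}$. From $\sigma \in J(\qq, W)$ we get $W(\sigma) = W$, and from $\sigma \in J(\qq', W')$ we get $W(\sigma) = W'$, so $W = W'$. It remains to see that $\qq \sim_W \qq'$, so that the two classes coincide. But $\sigma \in J(\qq, W)$ and $\sigma \in J(\qq', W)$ give $\qq, \qq' \in X_\ell(\pp) \cap A(\Kbar(\sigma))$, and by the preceding lemma $\qq - \qq' \in W(\sigma) = W$; hence $[\qq] = [\qq']$ in $X_\ell(\pp)/{\sim_W}$. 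Thus the sets indexed by distinct pairs are disjoint.

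This lemma is essentially a bookkeeping statement, so I do not anticipate a genuine obstacle; the only point requiring a little care is that the index set is $X_\ell(\pp)/{\sim_W}$ rather than $X_\ell(\pp)$ itself. This is exactly what the remark preceding the lemma — that $\qq \sim_W \qq'$ implies $J(\qq, W) = J(\qq', W)$ — is there to handle: without passing to $\sim_W$-classes the family would have repetitions and fail to be a partition in the literal sense, but after this identification each $\sigma$ lands in precisely one member of the family. One should also note explicitly that some $J(\qq, W)$ may be empty (for instance when $W$ is not of the form $W(\sigma)$ for any $\sigma \in \beta G_L$, or when the cocycle condition obstructs $\qq$ from being defined over any such $\Kbar(\sigma)$), which is permitted by the statement.
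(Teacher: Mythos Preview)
Your proof is correct and follows essentially the same approach as the paper. The paper organizes the argument in two stages---first observing that $\{\bigcup_{\qq} J(\qq,W)\}_{W\in\mathcal{W}}$ partitions $S_\ell(\pp)\cap\beta G_L$ since $W(\sigma)$ is uniquely determined, then checking disjointness over $\qq$ for fixed $W$---whereas you handle both coordinates $(W,\qq)$ simultaneously; but the content is identical, resting on $W=W'=W(\sigma)$ and $\qq-\qq'\in W(\sigma)$ via the preceding lemma.
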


\begin{proof}
By the definition of $J(\qq, W)$, the family
\[ \left\{\bigcup_{\qq \in X_\ell(\pp) / {\sim_W}} J(\qq, W)\right\}_{W \in \mathcal{W}} \]
gives a partition of the set $S_\ell(\pp) \cap \beta G_L$.
Then it suffices to show that the union $\bigcup_{\qq \in X_\ell(\pp) / {\sim_W}} J(\qq, W)$ is disjoint for any $W \in \mathcal{W}$.
Let $W \in \mathcal{W}$ and $\qq, \qq' \in X_\ell(\pp)$, and suppose that $J(\qq, W) \cap J(\qq', W) \neq \varnothing$.
Take any $\sigma$ from the left hand side.
Then we have $\qq, \qq' \in X_\ell(\pp) \cap A(\Kbar(\sigma))$ and $\qq - \qq' \in W(\sigma) = W$.
This implies $\qq \sim_W \qq'$, as desired.
\end{proof}

For $\qq \in X_\ell(\pp)$, $W \in \mathcal{W}$, and $0 \le d \le 2 g$, we define the following measures:
\begin{align*}
    Z_{\qq, W} &= \mu(J(\qq, W)), \\
    T_{\qq, d} &= \sum_{W \in \mathcal{W}_d} Z_{\qq, W} \\
    &= \mu(\{\sigma \in \beta G_L \mid \qq \in A(\Kbar(\sigma)) \text{ and } \dim_{\mathbb{F}_\ell} W(\sigma) = d\}), \\
    R_{\qq, d} &= \sum_{h = d}^{2 g} T_{\qq, h} \\
    &= \mu(\{\sigma \in \beta G_L \mid \qq \in A(\Kbar(\sigma)) \text{ and } \dim_{\mathbb{F}_\ell} W(\sigma) \ge d\}).
\end{align*}

\begin{lemma}\label{Removingq}
For all sufficiently large prime numbers $\ell$ the following statement holds: for any $\qq \in X_\ell(\pp)$ and $0 \le d \le 2 g$, we have
\[ R_{\qq, d} = \frac{1}{\ell^{2 g}} \mu(Y_d). \]
\end{lemma}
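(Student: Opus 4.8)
The plan is to compute $R_{\qq, d}$ by relating the condition ``$\qq \in A(\Kbar(\sigma))$ and $\dim_{\mathbb{F}_\ell} W(\sigma) \ge d$'' to a purely Galois-theoretic description via the representation $\rho_\ell$, and then to evaluate the relevant Haar measure using that for sufficiently large $\ell$ the image $\rho_\ell(G_K)$ has the ``full'' shape coming from Proposition~\ref{prop-JP-key}, so that the $b_\ell$-part (the affine part) is free over the fixed field of $A[\ell]$. Concretely, fix $\qq \in X_\ell(\pp)$; then $\qq \in A(\Kbar(\sigma))$ means precisely $\sigma(\qq) = \qq$, i.e.\ $b_\ell(\sigma)$ (computed with this chosen $\qq$ as base point) vanishes, while $W(\sigma) \supseteq W$ for $W \in \mathcal{W}_d$ translates into $a_\ell(\sigma)$ fixing $W$ pointwise together with the corresponding cocycle conditions. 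The first step is therefore to write $R_{\qq, d}$ as $\mu$ of the set of $\sigma \in \beta G_L$ such that $\rho_\ell(\sigma)$ lies in an explicit subgroup (or coset) of $\GL_{2g+1}(\mathbb{F}_\ell)$: those matrices whose affine vector part is $0$ and whose linear part fixes a $d$-dimensional subspace.

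The second step is to use Proposition~\ref{prop-JP-key}, which for all large $\ell$ gives $\Gal(K(X_\ell(\pp))/K(A[\ell])) \cong A[\ell]$, i.e.\ the cocycle $b_\ell$ restricted to $G_{K(A[\ell])}$ is a surjective homomorphism onto $A[\ell] \cong \mathbb{F}_\ell^{2g}$. Equivalently $[K(X_\ell(\pp)) : K(A[\ell])] = \ell^{2g}$, so the event $\{\sigma(\qq) = \qq\}$ cuts out a set of measure exactly $\ell^{-2g}$ of any coset of $G_{K(A[\ell])}$, \emph{and moreover} this event is independent, over each such coset, of which subspace $W(\sigma)$ equals — because $W(\sigma)$ depends only on $\sigma|_{K(A[\ell])}$ whereas the vanishing of $b_\ell(\sigma)$ is governed by the further extension $K(X_\ell(\pp))/K(A[\ell])$ on which $\Gal$ acts simply transitively on $X_\ell(\pp)$. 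Thus, decomposing $\beta G_L$ into cosets of $G_{K(A[\ell], \beta^{-1}\cdots)}$ (more precisely, working over $L(A[\ell])$ and noting $Y_d$ is already a union of $G_{K(W)} \cap \beta G_L$'s determined by $\sigma|_{K(A[\ell])}$), one gets
\[
R_{\qq, d} = \mu\bigl(\{\sigma \in \beta G_L : \dim W(\sigma) \ge d\}\bigr) \cdot \frac{1}{\ell^{2g}} = \frac{1}{\ell^{2g}}\,\mu(Y_d),
\]
where the crucial point is that the proportion $\ell^{-2g}$ is the same on every relevant coset, which is exactly what Proposition~\ref{prop-JP-key} guarantees once $\ell$ is large.

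The third step is just bookkeeping: check that the chosen $\qq$ does not matter (any two $\ell$-division points of $\pp$ are Galois conjugate, and conjugating by the appropriate element of $G_K$ permutes the cosets while preserving Haar measure and the function $\dim W(\sigma)$), which is why the statement is uniform in $\qq \in X_\ell(\pp)$; and check the edge cases $d = 0$ (where $Y_0 = \beta G_L$ and the identity reduces to $[K(\qq):K] = \ell^{2g}\cdot[L:K]\cdot(\text{index})$, consistent with Proposition~\ref{prop-JP-key}) and $d = 2g$. The main obstacle I expect is the independence claim in the second step: one must verify carefully that ``$\sigma$ fixes $\qq$'' is genuinely independent of the value $W(\sigma)$ when restricted to a single coset on which $W(\sigma)$ is constant — i.e.\ that conditioning on $\sigma|_{K(A[\ell])}$ (which pins down $W(\sigma)$) leaves $b_\ell(\sigma)$ uniformly distributed over $A[\ell]$. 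This is precisely the content of $\Gal(K(X_\ell(\pp))/K(A[\ell])) \cong A[\ell]$ from Proposition~\ref{prop-JP-key}, so the argument goes through for all sufficiently large $\ell$; the only care needed is to ensure $L(A[\ell])$ and $K(X_\ell(\pp))$ are handled compatibly, which follows since enlarging to $L$ only shrinks $\rho_\ell(G_L)$ inside $\rho_\ell(G_K)$ without affecting the affine-part surjectivity.
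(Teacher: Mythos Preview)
Your proposal is correct and follows essentially the same approach as the paper's proof. The paper packages the independence step more directly: it rewrites the relevant set as $G_{K(\qq)} \cap Y_d$, observes that $Y_d$ is a union of $G_{L(A[\ell])}$-cosets, verifies via a degree count that $K(\qq)$ and $L(A[\ell])$ are linearly disjoint over $K$ (using Proposition~\ref{prop-JP-key} applied over $L$), and then invokes \cite[Lemma~21.3.7]{FriedJ} --- which in particular renders your Step~3 bookkeeping about uniformity in $\qq$ unnecessary.
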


\begin{proof}
We take $\ell$ satisfying that $[L(\qq, A[\ell]) : L(A[\ell])] = \ell^{2 g}$.
By Proposition~\ref{prop-JP-key}, all sufficiently large $\ell$ satisfy this condition.
Then the fields $L(\qq)$ and $L(A[\ell])$ are linearly disjoint over $L$.
In addition, we have $[K(\qq, A[\ell]) : K(A[\ell])] = \ell^{2 g}$ and the fields $K(\qq)$ and $K(A[\ell])$ are linearly disjoint over $K$.

We have
\begin{align*}
    &\phantom{= {}} \{\sigma \in \beta G_L \mid \qq \in A(\Kbar(\sigma)) \text{ and } \dim_{\mathbb{F}_\ell} W(\sigma) \ge d\} \\
    &= \beta G_L \cap \{\sigma \in G_K \mid \qq \in A(\Kbar(\sigma))\} \cap \{\sigma \in G_K \mid \dim_{\mathbb{F}_\ell} W(\sigma) \ge d\} \\
    &= G_{K(\qq)} \cap Y_d
\end{align*}
and see that the set $Y_d$ is the union of some left cosets of $G_{L(A[\ell])}$ in $G_K$.
Moreover, we calculate
\begin{align*}
    [L(\qq, A[\ell]) : K] &= [L(\qq, A[\ell]) : L] [L : K] \\
    &= [L(\qq) : L] [L(A[\ell]) : L] [L : K] \\
    &= [K(\qq) : K] [L(A[\ell]) : K].
\end{align*}
Therefore $K(\qq)$ and $L(A[\ell])$ are linearly disjoint over $K$.
Now we can apply~\cite[Lemma~21.3.7]{FriedJ} and
\begin{align*}
    R_{\qq, d} = \mu(G_{K(\qq)}) \mu(Y_d) = \frac{1}{\ell^{2 g}} \mu(Y_d),
\end{align*}
as desired.
\end{proof}

\begin{proof}[Proof of Proposition~{\textup{\ref{prop-ComputationOfTheMeasure}}}]
By Lemma~\ref{PartitionByWq}, we have
\[ \mu(S_\ell(\pp) \cap \beta G_L) = \sum_{W \in \mathcal{W}} \sum_{\qq \in X_\ell(\pp) / {\sim_W}} Z_{\qq, W}. \]
Since $\qq \sim_W \qq'$ implies $Z_{\qq, W} = Z_{\qq', W}$ and each coset of $X_\ell(\pp) / {\sim_W}$ has $\ell^{\dim_{\mathbb{F}_\ell} W}$ elements, we have
\begin{align*}
    \mu(S_\ell(\pp) \cap \beta G_L) &= \sum_{d = 0}^{2 g} \sum_{W \in \mathcal{W}_d} \sum_{\qq \in X_\ell(\pp)} \frac{1}{\ell^d} Z_{\qq, W} \\
    &= \sum_{\qq \in X_\ell(\pp)} \sum_{d = 0}^{2 g} \frac{1}{\ell^d} T_{\qq, d}.
\end{align*}
By the definition of $R_{\qq, d}$, we have
\[ T_{\qq, d} = \begin{cases}
R_{\qq, d} - R_{\qq, d + 1} & 0 \le d \le 2 g - 1; \\
R_{\qq, 2 g} & d = 2 g.
\end{cases}
\]
Hence we calculate
\begin{align*}
    \mu(S_\ell(\pp) \cap \beta G_L) &= \sum_{\qq \in X_\ell(\pp)} \left(\sum_{d = 0}^{2 g - 1} \frac{1}{\ell^d} (R_{\qq, d} - R_{\qq, d + 1}) + \frac{1}{\ell^{2 g}} R_{\qq, 2 g} \right) \\
    &= \sum_{\qq \in X_\ell(\pp)} \left(R_{\qq, 0} + \sum_{d = 1}^{2 g}\left(- \frac{1}{\ell^{d - 1}} + \frac{1}{\ell^d}\right)R_{\qq, d}\right) \\
    &= \sum_{\qq \in X_\ell(\pp)} R_{\qq, 0} - \sum_{d = 1}^{2 g} \sum_{\qq \in X_\ell(\pp)} \frac{\ell - 1}{\ell^d} R_{\qq, d}.
\end{align*}
By Lemma~\ref{Removingq} and $\# X_\ell(\pp) = \ell^{2 g}$, we obtain
\[ \mu(S_\ell(\pp) \cap \beta G_L) = \mu(Y_0) - \sum_{d = 1}^{2 g} \frac{\ell - 1}{\ell^d} \mu(Y_d) = \frac{1}{[L : K]} - \sum_{d = 1}^{2 g} \frac{\ell - 1}{\ell^d} \mu(Y_d), \]
which completes the proof of the proposition.
\end{proof}

\subsection{Conclusion}

In this subsection, we combine the results obtained so far and derive the AVKF-ness part of Theorem~\ref{thm-KF}.
Proposition~\ref{prop-ComputationOfTheMeasure} reduces the estimation of $\mu(S_\ell(\pp) \cap \beta G_L)$ to the estimation of $\mu(Y_d)$.
The result by Zywina~\cite{Zywina16} gives the lower bound of $\mu(Y_1)$ that holds for suitable $L$ and for $\ell$ belonging to some set of prime numbers with positive Dirichlet density.
In applying it to our proof, a slight modification is needed.
We briefly recall Zywina's result and his argument.

\begin{theorem}[Zywina~{\cite[Proposition~1.2]{Zywina16}}]\label{zywina}
Let $K$ be a number field and $A$ an abelian variety over $K$ of dimension $g \ge 1$.
Then there are a finite Galois extension $L$ of $K$, a set $\mathcal{P}$ of prime numbers of positive Dirichlet density, and a positive constant $c$ such that
\[ \frac{\mu(\{\sigma \in \beta G_L \mid A(\Kbar(\sigma)) \neq \mathbf{0}\})}{\mu(\beta G_L)} \ge \frac{c}{\ell} \]
for each $\ell \in \mathcal{P}$ and $\beta \in G_K$.
\end{theorem}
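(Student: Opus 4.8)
The plan is to follow Zywina's original argument in~\cite[pp.~495--496]{Zywina16} closely, making only the adjustments needed to track dependence on the translate $\beta$. The starting point is the observation that $A(\Kbar(\sigma)) \neq \mathbf{0}$ certainly holds whenever $W(\sigma) = A[\ell] \cap A(\Kbar(\sigma))$ is nonzero, so it suffices to produce, for each $\beta$, a positive proportion (of size $\gtrsim 1/\ell$) of $\sigma \in \beta G_L$ fixing some nonzero $\ell$-torsion point. Fix a prime $\ell$ and a nonzero $v \in A[\ell]$; then $v \in A(\Kbar(\sigma))$ exactly when $\sigma$ lies in the stabilizer of $v$ under the mod-$\ell$ representation $a_\ell$, i.e.\ when the image $\bar\sigma$ of $\sigma$ in $a_\ell(G_K) \subset \GL_{2g}(\mathbb{F}_\ell)$ fixes $v$. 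So the measure in question is controlled by counting, inside the coset of $a_\ell(G_K)$ determined by $\beta$, the elements with a nonzero fixed vector; and the uniform lower bound over cosets must come from a group-theoretic input about $a_\ell(G_K)$ together with an equidistribution statement for Frobenius.

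First I would invoke the large-image/openness results for $\ell$-adic representations attached to abelian varieties: after replacing $K$ by a suitable finite \emph{Galois} extension $L$ (chosen once, independently of $\ell$) and restricting to a density-positive set $\mathcal{P}$ of primes, the image $a_\ell(G_L)$ contains the homotheties $\mathbb{F}_\ell^\times \cdot I$ and is ``large enough'' in the sense Zywina uses --- this is where the abelian-variety hypothesis and the integral/Galois-image machinery enter, exactly as in~\cite{Zywina16}. For such $\ell$ one shows the set of matrices in $\GL_{2g}(\mathbb{F}_\ell)$ (or in the relevant image) admitting an eigenvalue $1$ has relative size $\gg 1/\ell$: scaling by a homothety $\lambda \in \mathbb{F}_\ell^\times$ turns ``eigenvalue $\lambda^{-1}$'' into ``eigenvalue $1$'', and a positive fraction of matrices have \emph{some} eigenvalue in $\mathbb{F}_\ell^\times$, so averaging over the $\ell - 1$ homotheties yields the $c/\ell$ lower bound. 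The Chebotarev density theorem then converts this matrix count into a statement about $\mu(\{\sigma \in \beta G_L : a_\ell(\sigma) \text{ fixes a nonzero vector}\})$: since $L/K$ is Galois, $\beta G_L$ is itself a union of $G_L$-cosets meeting every $G_L$-coset structure compatibly, and Chebotarev applied over $L$ gives the density of Frobenius elements landing in the good matrix set, uniformly in which coset of $G_L$ inside $G_K$ one starts from --- this uniformity in $\beta$ is the one genuine modification of Zywina's statement.

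The main obstacle I anticipate is precisely this last uniformity. Zywina's proposition as originally stated fixes $\beta$ (or takes $\beta = 1$), and one must check that enlarging $L$ to be Galois over $K$ makes the Chebotarev count independent of the coset $\beta G_L$: concretely, one needs that the ``good'' conjugacy-stable subset of $a_\ell(G_L)$ one is counting is invariant under the outer action of $\Gal(L/K)$, or else that one works inside $a_\ell(G_K)$ and the good set there meets every coset of $a_\ell(G_L)$ in a proportion $\gg 1/\ell$. Since the set of matrices with a nonzero fixed vector is conjugation-invariant in all of $\GL_{2g}(\mathbb{F}_\ell)$, this should go through: the homothety-scaling trick only requires the homotheties to lie in $a_\ell(G_L)$, which holds on $\mathcal{P}$ after the initial choice of $L$, and the resulting lower bound is then insensitive to $\beta$. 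I would therefore structure the proof as: (i) cite the openness/large-image input to fix $L$ Galois and $\mathcal{P}$; (ii) prove the elementary matrix-counting lemma that the fixed-vector locus has density $\gg 1/\ell$ even within a single homothety-coset, hence within $a_\ell(G_L)$; (iii) apply Chebotarev over $L$ and the conjugation-invariance to get the bound uniformly over $\beta G_L$; (iv) note $W(\sigma) \neq 0 \Rightarrow A(\Kbar(\sigma)) \neq \mathbf{0}$ to finish.
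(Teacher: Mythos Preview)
The paper does not prove this theorem: it is cited directly from Zywina~\cite[Proposition~1.2]{Zywina16}, and the only commentary the paper offers is the structural remark that in Zywina's proof one first fixes the finite Galois extension $L$ via Serre's theorem (Theorem~\ref{serrecourse}) and only \emph{afterwards} constructs $\mathcal{P}$ and $c$ --- the point being that $L$ may therefore be enlarged to any finite Galois extension containing it (which the paper needs in Proposition~\ref{prop-appliedzywina}). There is thus no proof in the paper to compare your sketch against beyond this remark; your step (i), citing the large-image input to fix $L$, matches that remark exactly.

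Your outline of Zywina's argument is broadly correct, with two small corrections. First, there is no Chebotarev step: the set $\{\sigma \in \beta G_L \mid W(\sigma) \neq 0\}$ is a union of cosets of $\ker(a_\ell|_{G_L})$, so its Haar measure is computed directly as the proportion of elements in the relevant coset of the finite image $a_\ell(G_K)$ having eigenvalue $1$ --- no equidistribution of Frobenius is invoked. Second, your worry that the $\beta$-uniformity is a genuine modification appears to be misplaced: the paper attributes the statement \emph{with} the quantifier over $\beta \in G_K$ directly to Zywina's Proposition~1.2, and the reason the bound is uniform is exactly the one you identify --- the homotheties lie in $a_\ell(G_L)$ (by Theorem~\ref{serrecourse}(a)) and the fixed-vector locus is conjugation-invariant, so the homothety-averaging trick works identically inside every coset $a_\ell(\beta)\,a_\ell(G_L)$.
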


In the proof of this theorem, Zywina first takes an appropriate finite Galois extension $L$ of $K$ using the following theorem by Serre.
Then he constructs $\mathcal{P}$ and $c$ in the theorem.
Hence retaking $L$ sufficiently large for our purpose before constructing $\mathcal{P}$ and $c$ is allowed.

\begin{theorem}[Serre~{\cite{Serre86}}, cf.~{\cite[Theorem~3.1]{Zywina16}}]\label{serrecourse}
Let $K$ be a number field and $A$ an abelian variety over $K$ of dimension $g \ge 1$.
For a prime number $\ell$, let $a_\ell: G_K \to \mathrm{Aut}(A[\ell]) \cong \mathrm{GL}_{2 g}(\mathbb{F}_\ell)$ be the Galois representation on $A[\ell]$ (see Section~\textup{\ref{sec-independence}}).
Then there are a finite Galois extension $L$ of $K$ and positive integers $N$, $r$, and $\ell_0$ such that the following conditions are hold:
\begin{enumerate}[\textup{(\alph{enumi})}]
    \item For all prime numbers $\ell \ge \ell_0$, there is a connected, reductive subgroup $H_\ell$ of $\mathrm{GL}_{2 g, \mathbb{F}_\ell}$ of rank $r$ such that $a_\ell(G_L)$ is a subgroup of $H_\ell(\mathbb{F}_\ell)$ of index dividing $N$.
    Moreover, $H_\ell$ contains the group $\Gm$ of homotheties.
    \item The homomorphism $\prod_\ell a_\ell: G_L \to \prod_\ell a_\ell(G_L)$ is surjective.
\end{enumerate}
\end{theorem}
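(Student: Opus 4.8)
This is Serre's theorem, from his 1985--1986 Coll\`ege de France course \cite{Serre86}; I would extract it in the packaged form above by following Zywina's exposition \cite[Theorem~3.1]{Zywina16} (see also the relevant letters of Serre and the later accounts of Wintenberger and of Larsen--Pink). The plan is to assemble four ingredients: reductivity of the $\ell$-adic monodromy groups, the presence of the homotheties, the $\ell$-independence of their common rank, and the independence statement~(b) after a single finite base change.

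First I would pass to the $\ell$-adic representations $G_K \to \mathrm{GL}(T_\ell A) \cong \mathrm{GL}_{2g}(\mathbb{Z}_\ell)$ with image $G_\ell$. By Faltings' work (in particular the semisimplicity of $T_\ell A \otimes \mathbb{Q}_\ell$ as a $G_K$-module), the Zariski closure $\mathbf{G}_\ell$ of $G_\ell$ in $\mathrm{GL}_{2g, \mathbb{Q}_\ell}$ is a reductive group; by Bogomolov's theorem $G_\ell$ contains an open subgroup of the homotheties $\mathbb{Z}_\ell^\times$, so the group $\Gm$ of homotheties lies in $\mathbf{G}_\ell$. Using Frobenius tori one then shows that the rank of $\mathbf{G}_\ell$ equals a number $r$ independent of $\ell$. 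Choosing a finite Galois extension $L/K$ that works for all $\ell$ simultaneously --- so that the image of $G_L$ is Zariski-dense in the identity component $\mathbf{G}_\ell^{\circ}$ for every $\ell$ --- one arranges the closure of the image of $G_L$ to be connected reductive of rank $r$ and still to contain $\Gm$.

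Next I would spread out: for $\ell$ outside a finite set controlled uniformly in terms of $A/K$, choose a connected reductive model of $\mathbf{G}_\ell^{\circ}$ over $\mathbb{Z}_\ell$ inside $\mathrm{GL}_{2g, \mathbb{Z}_\ell}$ and let $H_\ell$ denote its special fibre, a connected reductive $\mathbb{F}_\ell$-subgroup of $\mathrm{GL}_{2g, \mathbb{F}_\ell}$ of rank $r$ containing $\Gm$. The key point is that $a_\ell(G_L)$, being the reduction modulo $\ell$ of the image of $G_L$ (which is open in $\mathbf{G}_\ell^{\circ}(\mathbb{Q}_\ell)$), lies in $H_\ell(\mathbb{F}_\ell)$ with index dividing an integer $N$ depending only on $A/K$. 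This uniform index bound --- together with the choice of $\ell_0$ --- is the step I expect to be the main obstacle, and the reason one quotes rather than reproves the theorem: it rests on Nori's theory of subgroups of $\mathrm{GL}_{2g}(\mathbb{F}_\ell)$ generated by their unipotent elements, the Larsen--Pink estimates, and Serre's own uniformity results for the family $\{G_\ell\}_\ell$. Finally, assertion~(b) is Serre's independence criterion: after enlarging $L$ once more to kill the (essentially abelian) obstruction, the canonical map $G_L \to \prod_\ell a_\ell(G_L)$ becomes surjective --- this is the mod-$\ell$ analogue of the $\ell$-adic statement \cite[Th\'eor\`eme~1]{Serre13} already used in Proposition~\ref{prop-independence}. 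One checks en route that $N$, $r$, and $\ell_0$ depend only on $A$ and $K$, which is \emph{all} the theorem asserts.
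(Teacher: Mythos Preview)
The paper does not prove this theorem at all: it is stated as a result of Serre, cited from~\cite{Serre86} and~\cite[Theorem~3.1]{Zywina16}, and then used as a black box in the proof of Proposition~\ref{prop-appliedzywina}. So there is no ``paper's own proof'' to compare against. Your sketch is a reasonable high-level outline of the ingredients behind Serre's theorem (Faltings' semisimplicity, Bogomolov's homotheties, Frobenius tori for rank independence, spreading out plus Nori/Larsen--Pink for the uniform index bound, and Serre's independence criterion for~(b)), and is appropriate as an explanatory aside; but for the purposes of this paper the correct treatment is simply to cite the result, as the author does.
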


\begin{proposition}\label{prop-appliedzywina}
Let $K$ be a number field, $A$ a simple abelian variety over $K$, and $\pp$ a non-torsion point in $A(K)$.
There are a finite Galois extension $L$ of $K$, a set $\mathcal{P}$ of prime numbers with positive Dirichlet density, and a positive constant $c$ such that
\[ \mu(S_\ell(\pp) \cap \beta G_L) \le \frac{1}{[L : K]} - \frac{c}{\ell} \]
for any $\ell \in \mathcal{P}$ and any $\beta \in G_K$.
\end{proposition}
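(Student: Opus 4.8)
The plan is to read the bound directly off Proposition~\ref{prop-ComputationOfTheMeasure} once a good lower bound for $\mu(Y_1)$ is available, and to obtain the latter from Zywina's Theorem~\ref{zywina}. First I would apply Theorem~\ref{zywina}: choosing, via Theorem~\ref{serrecourse}, a finite Galois extension $L$ of $K$, and then constructing the set $\mathcal{P}_0$ of primes of positive Dirichlet density and the constant $c_0 > 0$ it provides, so that
\[ \mu(Y_1) \;=\; \mu\bigl(\{\sigma \in \beta G_L \mid \dim_{\mathbb{F}_\ell} W(\sigma) \ge 1\}\bigr) \;\ge\; \frac{c_0}{\ell}\,\mu(\beta G_L) \;=\; \frac{c_0}{\ell\,[L:K]} \]
for every $\ell \in \mathcal{P}_0$ and every $\beta \in G_K$. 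Here the set appearing in Theorem~\ref{zywina} is exactly $Y_1$: since $W(\sigma) = A[\ell] \cap A(\Kbar(\sigma))$, the condition $\dim_{\mathbb{F}_\ell} W(\sigma) \ge 1$ says precisely that $A(\Kbar(\sigma))$ acquires a nontrivial $\ell$-torsion point. As the remark following Theorem~\ref{zywina} records, $L$ may be enlarged before $\mathcal{P}_0$ and $c_0$ are produced, so I would at the same time take $L$ large enough to realize the conclusion of Proposition~\ref{prop-independence} (not strictly needed for the present statement, but convenient in the next subsection).

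Next I would feed this into Proposition~\ref{prop-ComputationOfTheMeasure}, which applies to this $L$ and to any $\beta \in G_K$ for all sufficiently large $\ell$, say $\ell \ge \ell_1$. Since each summand there is nonnegative, for such $\ell$
\[ \mu(S_\ell(\pp) \cap \beta G_L) \;=\; \frac{1}{[L:K]} - \sum_{d=1}^{2g} \frac{\ell-1}{\ell^{d}}\,\mu(Y_d) \;\le\; \frac{1}{[L:K]} - \frac{\ell-1}{\ell}\,\mu(Y_1), \]
and combining with the lower bound for $\mu(Y_1)$ and the inequality $\tfrac{\ell-1}{\ell} \ge \tfrac12$ (valid for every prime $\ell$),
\[ \mu(S_\ell(\pp) \cap \beta G_L) \;\le\; \frac{1}{[L:K]} - \frac{\ell-1}{\ell}\cdot\frac{c_0}{\ell\,[L:K]} \;\le\; \frac{1}{[L:K]} - \frac{c_0}{2\ell\,[L:K]}. \]
Setting $c = c_0/(2[L:K]) > 0$ and $\mathcal{P} = \{\ell \in \mathcal{P}_0 \mid \ell \ge \ell_1\}$ — still of positive Dirichlet density, as only finitely many primes were removed — then yields exactly the asserted inequality for all $\ell \in \mathcal{P}$ and all $\beta \in G_K$.

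Since both substantive ingredients, Proposition~\ref{prop-ComputationOfTheMeasure} and Theorem~\ref{zywina}, are already in hand, I do not expect a serious analytic obstacle; the argument is essentially bookkeeping. The one point that genuinely requires care is the compatibility of the auxiliary fields: Zywina's finite Galois extension, the finite extension for which Proposition~\ref{prop-ComputationOfTheMeasure} is stated, and (looking ahead to the Conclusion) the extension of Proposition~\ref{prop-independence} must all be realized by a single $L$. This is legitimate because Proposition~\ref{prop-ComputationOfTheMeasure} holds for an \emph{arbitrary} finite $L/K$, while the flexibility built into Zywina's construction permits $L$ to be enlarged before $\mathcal{P}_0$ and $c_0$ are fixed, so a common choice of $L$ exists. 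A secondary, more cosmetic point is the identification of the locus $\{\sigma \in \beta G_L \mid A(\Kbar(\sigma)) \ne \mathbf{0}\}$ in Theorem~\ref{zywina} with $Y_1$ in the present notation, which is immediate from the definition of $W(\sigma)$.
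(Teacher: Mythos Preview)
Your proposal is correct and follows essentially the same route as the paper: choose $L$ via Zywina/Serre (enlarged to also satisfy Proposition~\ref{prop-independence}), extract the lower bound $\mu(Y_1)\ge c'/\ell$ from Theorem~\ref{zywina}, drop all but the $d=1$ term in Proposition~\ref{prop-ComputationOfTheMeasure}, and use $(\ell-1)/\ell\ge 1/2$. The only cosmetic difference is that the paper absorbs the factor $1/[L:K]$ into its constant $c'$ at the outset, whereas you carry it through and set $c=c_0/(2[L:K])$; the argument is otherwise identical.
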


\begin{proof}
Let $L'$ be the finite Galois extension of $K$ obtained as $L$ in Theorem~\ref{serrecourse}.
We can take a finite Galois extension $L$ of $K$ containing $L'$ and satisfying the statement in Proposition~\ref{prop-independence}.
Then $L$ satisfies the conditions in Theorem~\ref{serrecourse} again after retaking $N$ larger.
By Theorem~\ref{zywina}, there are a set $\mathcal{P}$ of prime numbers with positive Dirichlet density and a positive constant $c'$ such that
\[ \mu(Y_1) \ge \frac{c'}{\ell} \]
for any $\ell \in \mathcal{P}$ and any $\beta \in G_K$.
By Proposition~\ref{prop-ComputationOfTheMeasure}, after discarding finitely many prime numbers from $\mathcal{P}$ if required, we have
\[ \mu(S_\ell(\pp) \cap \beta G_L) \le \frac{1}{[L : K]} - \frac{\ell - 1}{\ell} \mu(Y_1) \le \frac{1}{[L : K]} - \frac{\ell - 1}{\ell} \cdot \frac{c'}{\ell} \le \frac{1}{[L : K]} - \frac{c'}{2 \ell} \]
for any $\ell \in \mathcal{P}$.
Here the last inequality follows from $\ell \ge 2$.
This implies that we can take $c = c' / 2$ as a desired constant.
\end{proof}

We are now in a position to prove Lemma~\ref{AVKF-Goal} and the AVKF-ness part of Theorem~\ref{thm-KF}.

\begin{proof}[Proof of Theorem~\textup{\ref{thm-KF}} (for the AVKF-ness)]
What remains to be shown is the claim of Lemma~\ref{AVKF-Goal}.
We define $\nu = [L : K] \mu$.
Fix $\beta \in G_K$.
Then we have $\nu(\beta G_L) = 1$ and may view $(\beta G_L, \nu)$ as a probability space.
Set $U_\ell = S_\ell(\pp) \cap \beta G_L$.
By Proposition~\ref{prop-appliedzywina}, we have
\[ \nu(U_\ell) \le 1 - \frac{c [L : K]}{\ell} \]
for any $\ell$ belonging to some set $\mathcal{P}$ of prime numbers with positive Dirichlet density.
We fine that
\[ \beta G_L \to \prod_{\ell \in \mathcal{P}} a_\ell(\beta G_L) \]
is surjective and thus the family $\{U_\ell\}_{\ell \in \mathcal{P}}$ is $\nu$-independent in $\beta G_L$.
Hence we have
\[ \nu\left(\bigcap_{\ell \in \mathcal{P}; \ell \le n} U_\ell\right) = \prod_{\ell \in \mathcal{P}; \ell \le n} \nu(U_\ell) \le \prod_{\ell \in \mathcal{P}; \ell \le n} \left(1 - \frac{c [L : K]}{\ell}\right) \]
for any positive integer $n$.
Taking $n \to \infty$ implies $\nu(\bigcap_{\ell \in \mathcal{P}} U_\ell) = 0$.
Since 
\[ N(\pp) \cap \beta G_L = \bigcap_\ell U_\ell \subset \bigcap_{\ell \in \mathcal{P}} U_\ell \]
and $\nu = [L : K] \mu$, we have $\mu(N(\pp) \cap \beta G_L) = 0$.
Summing up over all cosets of $G_K / G_L$, we obtain $\mu(N(\pp)) = 0$.
\end{proof}

\section{Corollaries to Theorem~\ref{thm-KF}}\label{sec-cor}

\subsection{Existence of Kummer-faithful fields with abelian absolute Galois group}

As described in Introduction, Theorem~\ref{thm-KF} immediately implies that there exists an algebraic extension of any number field that is Kummer-faithful and has abelian absolute Galois group.
Here we prove that such fields exist in abundance.

\begin{corollary}\label{cor-continuummany}
Let $K$ be a number field.
There are continuum many algebraic extensions of $K$ up to $K$-isomorphism which are Kummer-faithful fields with abelian absolute Galois group.
\end{corollary}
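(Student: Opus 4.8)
The plan is to exhibit the desired continuum of fields inside the family $\{\Kbar(\sigma)\}_{\sigma\in G_K}$. By Theorem~\ref{thm-KF} with $e=1$, there is a (measurable) set $Z\subset G_K$ of Haar measure zero such that $\Kbar(\sigma)$ is Kummer-faithful for every $\sigma\in G_K\smallsetminus Z$; moreover $G_{\Kbar(\sigma)}=\overline{\langle\sigma\rangle}$ is procyclic, hence abelian. As $\Kbar$ is countable, $K$ has at most $2^{\aleph_0}$ algebraic extensions up to $K$-isomorphism, so it suffices to find $2^{\aleph_0}$ pairwise non-$K$-isomorphic fields among $\{\Kbar(\sigma):\sigma\in G_K\smallsetminus Z\}$.

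To this end I would construct a $K$-isomorphism invariant of $\Kbar(\sigma)$ with many values. Let $P$ be the set of primes. For each $p\in P$ the field $K$ has a cyclic extension of degree $p$ (say inside a cyclotomic extension), and since their degrees are pairwise coprime these are pairwise linearly disjoint over $K$; hence their compositum $\Omega$ is Galois over $K$ with $\Gal(\Omega/K)\cong B:=\prod_{p\in P}\mathbb{Z}/p\mathbb{Z}$. Let $\psi\colon G_K\twoheadrightarrow B$ be the restriction map, so $\Omega=\Kbar^{\ker\psi}$. A direct computation gives $\Kbar(\sigma)\cap\Omega=\Omega^{\overline{\langle\psi(\sigma)\rangle}}$, and in $B$ one has $\overline{\langle\psi(\sigma)\rangle}=\prod_{p\in I(\sigma)}\mathbb{Z}/p\mathbb{Z}$, where $I(\sigma):=\{p\in P\mid\psi(\sigma)_p\neq 0\}\subseteq P$. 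Every subgroup $\prod_{p\in T}\mathbb{Z}/p\mathbb{Z}$ is normal in the abelian group $B$, so $\Kbar(\sigma)\cap\Omega$ is Galois over $K$, and by Galois theory distinct sets $T$ yield distinct such fields. Now a $K$-isomorphism $\Kbar(\sigma)\to\Kbar(\sigma')$ extends to some $g\in G_K$ with $g\Kbar(\sigma)=\Kbar(\sigma')$; since $g$ stabilizes every subextension of $\Omega/K$ that is Galois over $K$, we get $\Kbar(\sigma)\cap\Omega=g(\Kbar(\sigma)\cap\Omega)=\Kbar(\sigma')\cap\Omega$, whence $I(\sigma)=I(\sigma')$. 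Thus $I(\sigma)$ depends only on the $K$-isomorphism class of $\Kbar(\sigma)$.

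To finish, view $I$ as a map $G_K\to\{0,1\}^{P}$ into the Cantor space. It is continuous, since $\psi$ is continuous and the support map $B\to\{0,1\}^{P}$ pulls back each subbasic clopen set to $\{b\in B:b_p=0\}$ or its complement. Its fibres are null: pushing the Haar measure of $G_K$ forward along $\psi$ gives the Haar measure of $B$, under which the coordinates are independent with $\mu(\{b_p\neq 0\})=(p-1)/p$, so $\mu(I^{-1}(T))=\prod_{p\in T}\tfrac{p-1}{p}\cdot\prod_{p\notin T}\tfrac1p$; this vanishes because either $P\smallsetminus T$ is infinite (so the second product is $0$) or $T$ is cofinite, in which case $\sum_{p\in T}1/p=\infty$ and the first product is $0$. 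By inner regularity of the Haar measure, pick a compact $F\subseteq G_K\smallsetminus Z$ with $\mu(F)>0$; each $\sigma\in F$ gives a Kummer-faithful $\Kbar(\sigma)$ with abelian absolute Galois group. The set $I(F)$ is closed (continuous image of a compact set) and uncountable --- if it were countable, then $F=\bigcup_{T\in I(F)}(I^{-1}(T)\cap F)$ would be a countable union of null sets, contradicting $\mu(F)>0$. An uncountable closed subset of a Polish space has cardinality $2^{\aleph_0}$, so $I$ takes continuum many values on $F$; since $I$ is a $K$-isomorphism invariant, the fields $\{\Kbar(\sigma):\sigma\in F\}$ fall into at least $2^{\aleph_0}$ distinct $K$-isomorphism classes, and combined with the upper bound this gives exactly the continuum.

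The one genuinely delicate point is the passage from ``the Kummer-faithful locus has full measure'' to ``continuum many $K$-isomorphism classes''. Because the Haar measure is non-atomic, any invariant has only countably many positive-measure fibres, so one cannot just write down continuum many ``typical'' values; the remedy is to first extract a compact positive-measure subset of the good locus and then exploit the continuity of $I$ together with the perfect-set property. This is also why one should take $B$ built from all primes, so that $\sum_p 1/p$ diverges and every fibre of $I$ is null, rather than from a sparse set of primes.
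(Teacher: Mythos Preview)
Your argument is correct. The paper's own proof, however, is a one-liner: it invokes Jarden's theorem \cite[Theorem~7.1]{Jarden74}, which asserts that for any measurable $B\subset G_K$ of positive measure the family $\{\Kbar(\sigma):\sigma\in B\}$ already contains continuum many $K$-isomorphism classes, and then applies Theorem~\ref{thm-KF} to take $B$ equal to the full-measure set of $\sigma$ with $\Kbar(\sigma)$ Kummer-faithful.

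What you have done is essentially reprove (a version of) Jarden's theorem from scratch, by building an explicit $K$-isomorphism invariant $I(\sigma)\subset P$ with null fibres and then extracting continuum many values on a compact positive-measure subset via Cantor--Bendixson. This is more laborious than citing \cite{Jarden74}, but it is entirely self-contained and has the pleasant feature of telling the reader \emph{what} distinguishes the fields: namely, the set of primes $p$ for which $\Kbar(\sigma)$ contains the chosen degree-$p$ cyclic extension of $K$. Your use of inner regularity to pass to a compact subset before applying the perfect-set property is the right move and is exactly the kind of care the last paragraph of your write-up flags. If you wanted to shorten the argument, you could simply cite \cite[Theorem~7.1]{Jarden74}; conversely, if you keep the hands-on proof, it might be worth remarking that it gives an independent proof of a special case of that theorem.
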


\begin{proof}
It is known that, if $B$ is a measurable subset of $G_K$ of positive measure, then the set $\{\Kbar(\sigma) \mid \sigma \in B\}$ contains continuum many algebraic extensions of $K$ up to $K$-isomorphism~\cite[Theorem~7.1]{Jarden74}.
Set $B$ to be the set of $\sigma \in G_K$ satisfying that the field $\Kbar(\sigma)$ is Kummer-faithful.
Then the measure of $B$ is one by Theorem~\ref{thm-KF} and the absolute Galois group of $\Kbar(\sigma)$ is abelian for each $\sigma \in G_K$.
Therefore the corollary follows.
\end{proof}

It is proved that $G_{\Kbar(\sigma)}$ is isomorphic to $\widehat{\mathbb{Z}}$ for almost all $\sigma \in G_K$~\cite[Theorem~5.1]{Jarden74} (for number fields, it was originally proved by Ax~\cite[Proposition~3]{Ax}).
Murotani~\cite[Theorem~B~(ii)]{Murotani} recently proved that, for an algebraic extension $K$ of a finite field, $K$ is Kummer-faithful if and only if $G_K \cong \widehat{\mathbb{Z}}$.
These facts naturally raise the following question.

\begin{question}
Let $K$ be a number field.
Describe the relation between the two conditions for $\sigma \in G_K$: that $\Kbar(\sigma)$ is Kummer-faithful and that $G_{\Kbar(\sigma)} \cong \widehat{\mathbb{Z}}$.
\end{question}

\subsection{Application to the composite of Kummer-faithful fields}

Ozeki and Taguchi~\cite{OT} discussed whether the composite field of two Kummer-faithful Galois extensions of a perfect field is again a Kummer-faithful field, and provided a counterexample~\cite[Example~3.7]{OT} in the $p$-adic number field case.
In the global number field case, we show that the composite field of two Kummer-faithful non-Galois extensions of a number field is not always Kummer-faithful; moreover, it can be the algebraic closure.

\begin{corollary}
Let $K$ be a number field.
There exist two Kummer-faithful extensions $F_1$ and $F_2$ of $K$ such that $F_1 F_2 = \Kbar$.
In particular, $F_1 F_2$ is not Kummer-faithful.
\end{corollary}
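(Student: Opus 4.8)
The plan is to use Theorem~\ref{thm-KF} to produce, for a well-chosen measurable set $B \subset G_K$ of full measure, two elements $\sigma, \tau \in B$ whose fixed fields $F_1 = \Kbar(\sigma)$ and $F_2 = \Kbar(\tau)$ are both Kummer-faithful yet satisfy $F_1 F_2 = \Kbar$. Since Kummer-faithfulness holds for almost all $\sigma$ by Theorem~\ref{thm-KF}, it suffices to find a \emph{pair} $(\sigma, \tau)$ with $\sigma, \tau$ in this full-measure set and with $\Kbar(\sigma) \cdot \Kbar(\tau) = \Kbar$. The condition $\Kbar(\sigma)\Kbar(\tau) = \Kbar$ is equivalent to $\langle \sigma \rangle \cap \langle \tau \rangle = \{1\}$ as closed subgroups of $G_K$, or more precisely to the statement that no nontrivial finite extension of $K$ is contained in both $\Kbar(\sigma)$ and $\Kbar(\tau)$; concretely, if $M/K$ is finite with $M \subseteq \Kbar(\sigma) \cap \Kbar(\tau)$, then $M$ is fixed by both $\sigma$ and $\tau$, and one wants to force $M = K$.

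First I would reduce the problem to a measure-theoretic statement on the product space $G_K \times G_K$ with the product Haar measure. Let $B_0 = \{\sigma \in G_K \mid \Kbar(\sigma)\text{ is Kummer-faithful}\}$, which has measure one by Theorem~\ref{thm-KF}. Then $B_0 \times B_0$ has full measure in $G_K^2$. I would then argue that the set $D = \{(\sigma,\tau) \in G_K^2 \mid \Kbar(\sigma)\Kbar(\tau) = \Kbar\}$ also has full (in fact positive suffices) measure in $G_K^2$. Granting this, $D \cap (B_0 \times B_0)$ is nonempty, and any pair in it gives the desired $F_1, F_2$. So the crux is showing $\mu_{G_K^2}(D) > 0$ --- equivalently, that the complement, the set of pairs $(\sigma,\tau)$ such that $\Kbar(\sigma) \cap \Kbar(\tau)$ contains some nontrivial finite extension of $K$, has measure less than one (indeed zero).

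For the measure computation, I would write the complement of $D$ as a countable union over finite Galois extensions $M/K$ with $M \neq K$ of the sets $\{(\sigma,\tau) \mid \sigma|_M = 1\text{ and }\tau|_M = 1\}$ --- wait, more carefully, one needs $M \subseteq \Kbar(\sigma)$, i.e. $\sigma$ fixes $M$, which for Galois $M$ means $\sigma|_M = \mathrm{id}$; but a general finite subextension of $\Kbar(\sigma)$ need not be Galois over $K$. However, any finite extension of $K$ inside $\Kbar(\sigma) \cap \Kbar(\tau)$ generates a finite extension whose Galois closure $M$ over $K$ is still fixed... no. Instead I would use that $\Kbar(\sigma)$ contains a nontrivial finite extension of $K$ iff $\sigma$ lies in some proper open subgroup of $G_K$ --- but that always happens. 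The right condition: $\Kbar(\sigma) \cap \Kbar(\tau) \supsetneq K$ iff there is a prime $p$ and a degree-$p$ extension $L/K$ inside both, iff $\sigma, \tau$ both fix some $L$ with $[L:K]$ prime. For each finite extension $L/K$, the set $\{\sigma : L \subseteq \Kbar(\sigma)\} = \{\sigma : \sigma|_{\tilde L} \text{ fixes } L\}$ where $\tilde L$ is the Galois closure; this has measure $[\tilde L : L]/[\tilde L : K] = 1/[L:K] \le 1/2$. Hence $\{(\sigma,\tau): L \subseteq \Kbar(\sigma)\cap\Kbar(\tau)\}$ has measure $\le 1/4$. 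Summing over the countably many such $L$ need not give a bound below $1$ directly, so instead I would invoke independence: $\mu_{G_K^2}(G_K^2 \setminus D) \le \sum_L \mu(\{\sigma: L \subseteq \Kbar(\sigma)\})^2$, and this can still be large.

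The hard part will be this final measure estimate, and I expect it is cleanest to instead argue non-constructively as follows: fix any $\sigma \in B_0$ with $G_{\Kbar(\sigma)} \cong \widehat{\mathbb{Z}}$ (almost all $\sigma$ work, by~\cite[Theorem~5.1]{Jarden74} combined with Theorem~\ref{thm-KF}), so $\Kbar(\sigma)$ is a procyclic, Kummer-faithful extension. The set $C_\sigma = \{\tau \in G_K \mid \Kbar(\sigma) \cap \Kbar(\tau) = K\}$ is the set of $\tau$ avoiding, for each finite extension $L \subseteq \Kbar(\sigma)$ with $L \neq K$, the closed set $\{\tau : \tau|_L = \mathrm{id}\}$ --- but since $\Kbar(\sigma)/K$ is infinite with group $\widehat{\mathbb Z}$, it contains exactly one extension of each degree $n$, say $L_n$, and $L_n$ is \emph{not} Galois over $K$ in general, so I would instead take $M_n = $ Galois closure of $L_n$ and note $\{\tau : L_n \subseteq \Kbar(\tau)\}$ has measure $1/n$. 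Thus $\mu(G_K \setminus C_\sigma) \le \sum_{n \ge 2} 1/n = \infty$, which is unhelpful. The genuinely robust route, which I would ultimately adopt, is: the set $D$ of pairs with compositum equal to $\Kbar$ has \emph{positive} measure because, by a result of Jarden on Hilbertian fields / the structure of $G_K^2$ (e.g.\ the elements of $G_K^e$ generating a free profinite group of rank $e$ form a set of measure one), for almost all $(\sigma,\tau) \in G_K^2$ the pair $(\sigma,\tau)$ topologically generates a free profinite group of rank $2$, which in particular has trivial intersection $\overline{\langle\sigma\rangle} \cap \overline{\langle\tau\rangle}$, forcing $\Kbar(\sigma)\Kbar(\tau) = \Kbar$; hence $D$ has measure one, $D \cap (B_0 \times B_0)$ has measure one, and picking any pair there finishes the proof, with the last sentence "$F_1 F_2 = \Kbar$ is not Kummer-faithful" following since $\Kbar$ has nontrivial divisible points (e.g.\ $\Gm(\Kbar) = \Kbar^\times$ is divisible).
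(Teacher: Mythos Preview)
Your final approach---showing that the set $B_0\times B_0$ of Kummer-faithful pairs has full measure by Theorem~\ref{thm-KF}, that the set $D=\{(\sigma,\tau):\Kbar(\sigma)\Kbar(\tau)=\Kbar\}$ has full measure by a result of Jarden, and then intersecting---is exactly the paper's proof (the paper cites \cite[Theorem~5.1]{Jarden74} directly for $\mu(D)=1$ rather than going through the free-profinite-group formulation, but this is the same theorem). The several false starts preceding it should simply be deleted; the argument you label the ``genuinely robust route'' is the whole proof.
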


\begin{proof}
Fix two positive integers $e_1$ and $e_2$.
Let
\[ B_i = \{\sigma \in G_K^{e_i} \mid \Kbar(\sigma) \text{ is Kummer-faithful}\} \]
for $i = 1, 2$ and let
\[ C = \{(\sigma_1, \sigma_2) \in G_K^{e_1} \times G_K^{e_2} \mid \Kbar(\sigma_1) \Kbar(\sigma_2) = \Kbar \}. \]
If $(\sigma_1, \sigma_2) \in G_K^{e_1} \times G_K^{e_2}$ belongs to the intersection $(B_1 \times B_2) \cap C$, then $F_1 = \Kbar(\sigma_1)$ and $F_2 = \Kbar(\sigma_2)$ satisfy the statement in the corollary.
Thus it suffices to show that $(B_1 \times B_2) \cap C$ is nonempty.
By Theorem~\ref{thm-KF}, each $B_i$ has measure one in $G_K^{e_i}$ and $B_1 \times B_2$ has measure one in $G_K^{e_1} \times G_K^{e_2}$.
By~\cite[Theorem~5.1]{Jarden74}, the set $C$ has measure one in $G_K^{e_1} \times G_K^{e_2}$.
Hence $(B_1 \times B_2) \cap C$ has also measure one and is thus nonempty.
\end{proof}

Note that the field $\Kbar(\sigma)$ is far from being Galois: in fact, $\Kbar(\sigma)$ is a Galois extension of no proper subfield for almost all $\sigma \in G_K^e$~\cite[Theorem~7.10]{BS}.
It is still open that the composite field of two Kummer-faithful Galois extensions of a number field is again a Kummer-faithful field.

\bibliographystyle{amsalpha}

\end{document}